	\theoremstyle{plain}
		\newtheorem{mainthm}{\textsc{Theorem}}	
\newtheorem{mainprop}{\textsc{Proposition}}	
		\newtheorem{thm}{Theorem}[section]	
		\newtheorem{maincor}{\textsc{Corollary}}
		\newtheorem{lem}[thm]{Lemma}		
		\newtheorem{prop}[thm]{Proposition}
	\theoremstyle{definition}
		\newtheorem{defn}[thm]{Definition}	
		\newtheorem{ex}[thm]{Example}		
	\theoremstyle{remark}
		\newtheorem{rem}[thm]{Remark}		
		\newtheorem{note}[thm]{Notation}	
\numberwithin{equation}{section}	% equation numbering
\newcommand{\GL}{\mathrm{GL}}
\newcommand{\cfsa}{\mathcal{CF}^{sa}}
\newcommand{\trasp}[1]{{#1}^\mathsf{T}}		
\newcommand{\traspm}[1]{{#1}^\mathsf{-T}}	
\newcommand{\iMor}{\mathrm{n_-}}	
\newcommand{\iMorh}[1]{\mathrm{n}_-^{#1}}		
\newcommand{\R}{\mathbf{R}}
\newcommand{\C}{\mathbf{C}}
\newcommand{\Q}{\mathbf{Q}}
\newcommand{\U}{\mathbf{U}}
\newcommand{\OO}{\mathrm{O}}
\newcommand{\TT}{\mathbf{T}}	
\newcommand{\HH}{\mathbf{H}}	
\newcommand{\mmod}{\quad({\mathrm{mod}\ 2)}}
\newcommand{\irel}{I}
\newcommand{\Sp}{\mathrm{Sp}}
\newcommand{\Lagr}{\Lambda}
\newcommand{\Real}{\mathrm{Re}}
\newcommand{\Fix}{\mathrm{Fix}}
\newcommand{\Lin}{\mathscr{L}}
\newcommand{\Graph}{\mathrm{Gr\,}}
\newcommand{\Mat}{\mathrm{Mat}}
\newcommand{\Hess}{D^2}
\newcommand{\fredsa}{\mathcal F^{\textup{sa}}}
\newcommand{\Bsym}{\mathrm{B}_{\textup{sym}}}
\newcommand{\Gr}{\mathrm{Gr}}
\newcommand{\im}{\mathrm{rge}\,}
\newcommand{\die}[1]{ D_{#1}}	
\newcommand{\cic}[1]{ C_{#1}}
\newcommand{\norm}[1]{\left\| #1 \right\|}
\newcommand{\N}{\mathbb{N}}		
\newcommand{\iMas}{\iota_{\scriptscriptstyle{\mathrm{geo}}}}
\newcommand{\iRel}{\iota_{\scriptscriptstyle{\mathrm{spec}}}}
\newcommand{\iCLM}{\mu^{\scriptscriptstyle{\mathrm{CLM}}}}
\newcommand{\Z}{\mathbf{Z}}		
\newcommand{\I}{\mathbb{I}}		
\providecommand{\myfloor}[1]{\left \lfloor #1 \right \rfloor }
\newcommand{\coindex}{\mathrm{n_+}}
\newcommand{\iindex}[1]{\mu_{\scriptscriptstyle{\mathrm{Mor}}}\left[#1\right]}
\newcommand{\coiindex}[1]{\mathrm{n_+}\left[#1\right]}
\newcommand{\ssgn}[1]{\sgn\left[#1\right]}
\newcommand{\noo}[1]{\overset {\mbox{%
\lower1pt\hbox{${\scriptscriptstyle o}$}}}n^{\mbox{%
\lower2pt\hbox{$\scriptscriptstyle #1$}}}}
\DeclareMathOperator{\diag}{diag}
\DeclareMathOperator{\spfl}{sf}	
\DeclareMathOperator{\sgn}{sgn}		
\DeclareMathOperator{\rk}{rank}	
\DeclareMathOperator{\ord}{ord}		
\renewcommand{\leq}{\leqslant}
\renewcommand{\geq}{\geqslant}
\renewcommand{\=}{\coloneqq}
\newcommand{\email}[1]{\href{mailto:#1}{\textsf{#1}}}
\newcommand{\Id}{I}
\title{A dihedral Bott-type iteration formula and  stability of symmetric
periodic orbits}
\author{Xijun Hu\thanks{The author is partially supported by NSFC (No.11425105).},
Alessandro Portaluri
\thanks{The
author is partially supported by the project ERC Advanced Grant 2013
No.~339958 ``Complex Patterns for Strongly Interacting Dynamical Systems ---
COMPAT”, by Prin 2015 ``Variational methods, with applications to
problems in mathematical physics and geometry” No.~2015KB9WPT$\_$001
and
by Ricerca locale 2015  ``Semi-classical trace formulas and their application
in physical chemistry” No.~Borr$\_$Rilo$\_$16$\_$01 .} , Ran Yang }
\date{\today}
\date{\today}
\begin{document}
 \maketitle

\begin{abstract}
In 1956, Bott in his celebrated paper on closed geodesics and Sturm intersection
theory, proved  an Index Iteration Formula for closed
geodesics on Riemannian manifolds. Some years later, Ekeland improved this
formula  in the case of convex Hamiltonians and,  in 1999, Long
generalized the Bott iteration formula by putting in its natural symplectic context
and  constructing a very effective Index Theory. The literature about this
formula is quite  broad and the dynamical implications in the Hamiltonian world
(e.g. existence, multiplicity, linear stability etc.)   are enormous.

Motivated by the  recent discoveries on the stability properties of symmetric periodic
solutions of singular Lagrangian systems,  we establish a Bott-type
iteration formula for  dihedrally equivariant Lagrangian and   Hamiltonian systems.

We finally apply our theory for computing the  Morse indices   of the
celebrated Chenciner and Montgomery figure-eight orbit for the planar three body problem in different 
equivariant spaces. Our last
dynamical consequence is an  hyperbolicity criterion
for reversible Lagrangian systems.
\vskip0.2truecm
\noindent
\textbf{AMS Subject Classification:} 70F10, 37C80, 53D12, 58J30.
\vskip0.1truecm
\noindent
\textbf{Keywords:} Maslov index, Spectral Flow, Equivariant dihedral group
action, Bott iteration
formula, linear stability, 3-body problem.
\end{abstract}

\tableofcontents

\section{Introduction and description of the main results}\label{sec:intro}

Symmetric periodic orbits of Lagrangian or more general Hamiltonian systems
have been discovered in the
last decades by developing suitable variational methods on a space of loops symmetric
 with respect to a chosen
symmetry compact  Lie group $G$.  The well-known {\em Palais principle of symmetric criticality\/}
(cf. \cite{Pal79}) claims that
critical points of the restriction of a $G$-invariant  functional to the space fixed  by $G$
are critical points of the functional.  Variational minimization
methods  on the space of $G$-equivariant loops, were recently successfully employed by many authors for
proving  the existence of amazing symmetric periodic orbits; e.g. the Chenciner-Montgomery
figure-eight solution of the planar three-body problem with
equal masses. (Cf., for further details, \cite{CM00, Che02a, Che02b, FT04} and references
therein). In the aforementioned problem the idea of minimizing the Lagrangian action on a space of loops
symmetric with respect to a chosen symmetry group plays a crucial role in order to prove the
existence of  collisionless periodic orbits.

The existence of $G$-symmetric periodic orbits of a Hamiltonian system is the first step in order to
penetrate the intricate dynamics of the problem. The second step in this analysis is to investigate the
stability and  multiplicity properties of these solutions. Many techniques have been
developed in the last years for tackling both these problems and among all of them a central role is
essentially played by the index theory. An enormous contribution has been given in the last decades
by Long and his collaborators in the construction of an index theory based on a
symplectic invariant nowadays known in literature as {\em Maslov  index.\/} A central device in
order to investigate the aforementioned problems is based on the celebrated {\em Bott-type
iteration formula\/} for the Maslov-type index, which directly provides an estimate of the
elliptic height (i.e. the total algebraic multiplicity of all eigenvalues on the unit circle of the complex
plane).

Bott-type iteration formula represents the starting point of our analysis and
the initial motivations for  further  investigations. This formula was introduced by author in \cite{Bot56}
for his investigation on closed geodesics on Riemannian manifolds and some years later, Ekeland in \cite{Eke90}
and Long in \cite{Lon02}  put it into a symplectic context by generalizing such a formula to any linear $T$-periodic
Hamiltonian system.  The idea behind  this formula is based on the fact that if $\gamma$ denotes the
fundamental solution of a linear Hamiltonian $T$-periodic system, we can associate
an integer, let's say $\iota_1(\gamma;[0,T])$. It is very natural to single out for considerations also the
indices  $\iota_1(\gamma;[0,2\,T])$, $\iota_1(\gamma;[0,3\,T])$, etc. corresponding to the intervals which are
multiples of the basic period $T$. Bott's iteration formula, essentially allow us to relate the index
$\iota_1(\gamma;[0,k\,T])$ to $\iota_1(\gamma;[0,T])$ and to the Floquet multipliers of the Hamiltonian system.

Let $z$ be a periodic solution of the Hamiltonian system corresponding to the Hamiltonian function $H$ of class $\mathscr C^2$; namely
\begin{equation}\label{eq:hamsys-intro}
\begin{cases}
\dot z(t)= J\nabla H\big(t,z(t)\big),\qquad t \in [0,T]\\
z(0)=z(T)
\end{cases}
\end{equation}
with $J\= \begin{bmatrix}  0 & -\Id\\ \Id & 0 \end{bmatrix}$ and where $\Id$ denotes the identity on $\R^m$. Its associated
fundamental solution $\gamma$ is the matrix-valued solution of the linear initial value problem obtained by
linearizing  the Equation \eqref{eq:hamsys-intro} along $z$; i.e.
\begin{equation}
\begin{cases}
\dot \gamma(t)=J D^2 H\big(t, z(t)\big)\gamma(t)\\
\gamma(0)=\Id.
\end{cases}
\end{equation}
It is well-known that $\gamma$ is a path in the symplectic group of  $(\R^{2m}, \omega)$ where $\omega$ denotes the
standard symplectic structure. If $\U$ is the unit circle of the complex plane, we denote by $\iota_\omega(\gamma)$ the so-called
{\em $\omega$-Maslov-type index of $\gamma$\/} defined as intersection index of $\gamma$ with a transversally oriented 1-codimensional
manifold in the symplectic group $\Sp(2m)$. (We refer the interested reader to \cite{Lon02} and references therein
for further details).

Based on the index function $\iota_\omega$, Long established a Bott-type iteration formula to any continuous path 
$\gamma:[0,T] \to \Sp(2m)$such that $\gamma(0)=\Id$.  More precisely, for any $T>0$, $\gamma$ as before, $z \in \U$ and $n \in \N$,
\begin{equation}\label{eq:classical-Bott}
\begin{split}
\iota_z(\gamma, n)=\sum_{\omega^n=z} \iota_\omega(\gamma)&\\
\nu_z(\gamma, n)&=\sum_{\omega^n=z} \nu_\omega(\gamma)
\end{split}
\end{equation}
where $\nu_\omega(\gamma)$ denotes the $\omega$-nullity of $\gamma$ defined as follows
\[
\nu_\omega(\gamma)\= \dim_\C\ker_\C\big(\gamma(T)-\omega\Id).
\]
Motivated by some concrete problems in Celestial Mechanics, some years ago, in 2009 in fact,
authors in \cite[Theorem 1.1]{HS09} generalized the Bott-type iteration formula given in Equation \eqref{eq:classical-Bott}
in a form which is crucial for applying it, if we are in presence of a cyclic group  $\Z_{n}$ acting on the
orbit. The main idea in order to establish a $\Z_n$-invariant  Bott-type iteration formula, is essentially based on a $\Z_n$-equivariant
decomposition of the path space into isotypical components.

Few years ago, more precisely in 2006, authors in \cite{LZZ06}, working on the  famous Seifert conjecture (claiming the existence of 
 at least $n$ geometrically distinct brake orbits on a given regular compact hypersurface $\Sigma$),  were able to prove 
  the existence of two brake orbits on $\Sigma$ under an additional condition that $H$ is even. The proof of this result is essentially based on  
  a new Maslov-type index theory for brake orbits and Morse
theory applied to the Hamiltonian action functional. Some related formulas, although in a quite different context, were recently
found by Liu and Zhang in \cite{LZ14a} and \cite{LZ14b}. In this paper, authors
established some  Bott-type iteration formulas for the  $L$-index (namely the Maslov-type index of symplectic paths associated
with a Lagrangian subspace). It is worth to quote the paper \cite{LT15} where authors, proved a Bott-type iteration
formula for the Maslov $P$-index which can be taken as a generalization of the standard Bott-type iteration formula where $P=I_{2n}$. Recently, the authors in
\cite{WZ16} gave a direct proof of the iteration formulae for the Maslov-type indices of symplectic paths by using a splitting of the nullity to yield a splitting formula for the Maslov-type indices of symplectic paths in weak symplectic Hilbert space.

Pushing further this analysis and motivated by the fact that many interesting  periodic orbits are actually symmetric with
respect to a more general group action (like the aforementioned figure-eight orbit which is $\die{6}$-symmetric
where $\die{n}$ denotes the dihedral group of order $2n$), in this paper we establish an abstract $\die{n}$-equivariant spectral
 flow formula and a new $\die{n}$-equivariant Bott-type iteration formulas for Hamiltonian (resp. for Lagrangian)
 systems,  through a suitable isotypical decomposition of the path space of the phase (resp. configuration) space.

 \subsection{Equivariant set-up, description of the problem and main results}\label{subsec:equivariant-setup}

 Let $E$ be a complex separable Hilbert space and  $G$ be a finite (abstract) group acting on $E$. Denoting by
 $\U(E)$ the group of {\em unitary operators\/}, we assume that $G$ acts on $E$ through its {\em unitary
 representation\/}
 \[
  G \ni g \longmapsto U_g \in \U(E) .
\]
 We start to consider the (finite)  cyclic group of order $n$ presented as follows
$\cic{n}\=\langle r| r^n=1\rangle$, where we denoted by $e$ the identity of the group. It is 
well-known that it  can be represented by the group
of rotations through angles $2k\pi/n$ around an axis. We denote by
$\die{n}$ the semi-direct product of $\cic{n}$ and $\cic{2}$; in
symbols $\die{n}= \cic{n} \rtimes \cic{2}$,
where $\cic{n}\=\langle r| r^n=e\rangle$ and
$\cic{2}\=\langle s| s^2=e\rangle$.\footnote{%
We observe that
$\die{1}$ and $\die{2}$ are atypical groups and more precisely $\die{1}$ is the
cyclic group of order $2$ whilst $\die{2}$ is the {\em Klein four-group\/}. It
is worth
noticing that for $n \geq 3$, $\die{n}$ is non-abelian.}
The group $\die{n}$ is usually termed  {\em dihedral  group\/} of degree $n$ and
order $2n$. For $n \geq 3$, $\die{n}$ is  the
group of symmetries of a regular $n$-gon in the plane, namely, the group of all
the
plane symmetries that preserves a regular $n$-gon. It contains $n$ rotations
(which form a subgroup isomorphic to $\cic{n}$) as well as  $n$ reflections.
From an algebraic viewpoint, we point out that,  it is a metabelian
group having the cyclic normal
subgroup $\cic{n}$ of index $2$ and the following presentation
\begin{equation}\label{eq:diedrale-1}
 \die{n}\= \langle r,s|r^n=s^2=1,\  srs=r^{-1}\rangle.
\end{equation}
Each element of $\die{n}$ can  be uniquely written, either in the form $r^k$,
with $0 \leq k \leq n-1$ (if it belongs to $\cic{n}$), or in the form $sr^k$,
with $0 \leq k \leq n-1$  (if
it doesn't belong to $\cic{n}$). Observe that the relation $srs=r^{-1}$ implies
that $sr^ks=r^{-k}$ and $(sr^k)^2=1$. It is worth noticing also that the irreducible
representations of $\die{n}$ are one or two dimensional. More precisely, if $n$ is
odd except the (trivial) identity and the minus identity representations, all others are two-dimensional;
if $n$ is even we have other than the previous one-dimensional irreducible
representations also the sign representations whilst all other are
two-dimensional.

With a slight abuse of notation, we denote with the same symbol the dihedral
group as well as its unitary representation in $E$. Thus, the group $\die{n}$
(actually  a unitary  representation of the dihedral group
$\die{n}$) is presented as follows
\begin{equation}\label{eq:image-diedrale-unitaria-app-1}
 D_n \= \langle \mathcal R,  \mathcal  S \in \U(E)| \mathcal R^n=
  \mathcal S^2=( \mathcal  S  \mathcal  R)^2= \Id_E\rangle\subset \U(E).
\end{equation}
As direct consequence of the
spectral mapping theorem, it readily follows that the spectrum of
$ \mathcal  R$ is given by $\sigma( \mathcal  R)=\Set{\zeta_n^k\in \C|k=0, \dots,
n-1}.$ Furthermore, if  $E_k\=\ker(  \mathcal R-\zeta_n^k\Id_E)$
 denotes the spectral space corresponding to
the eigenvalue $\zeta_n^k$, by the spectral theory of  normal operators, it is
well-known that $E_k$ are mutually orthogonal.

We define the following closed subspaces of $E$
\begin{equation}\label{eq:Fk-intro}
F_{k}\= \begin{cases}
E_0  & \textrm{ if } k=0\\
E_k \oplus E_{-k} & \textrm{ if } k=1,\cdots, \myfloor{(n-1)/2}\\
\begin{cases}
E_{n/2}  & \textrm{ if } n \textrm{ is even }\\
0 & \textrm{ otherwise}
\end{cases} & \textrm{ if } k=n/2
\end{cases}
\end{equation}
where we denoted by $\myfloor{\cdot}$ the integer part.
(We refer the interested reader to  Appendix \ref{subsec:group-algebras}, for further details).  For any $k$, he subspace
 $F_k$ defined in Equation \eqref{eq:Fk-intro} is a
$\die{n}$-module with the action given by
\begin{multline}
 \cic{n} \times F_k \to F_k:\big(\zeta_n^k,v\big)\longmapsto
\mathcal  R\, v\=\begin{bmatrix}
\zeta_n^{k}\, \Id_E&0\\0 & \zeta_n^{-k}\,\Id_E
\end{bmatrix}\,v\quad \textrm { and }	\\
 \cic{2} \times F_k \to F_k:\big(s,v\big)\longmapsto
  \mathcal S\, v\=\begin{bmatrix}
0&\Id_E\\ \Id_E &0
\end{bmatrix}\,v.
\end{multline}
Thus, we get a decomposition of the Hilbert space $E$ into mutually orthogonal
$\die{n}$-stable modules, given by
\begin{equation}\label{eq:decomposition-2-intro}
 E=F_0\oplus \dots \oplus F_{\bar n}
\end{equation}
where $\bar n $ denotes the largest integer not greater than $\myfloor{n/2}$.
\begin{rem}
We observe that the decomposition given in Equation
\eqref{eq:decomposition-2-intro} is the isotypic decomposition of $E$  induced
by the unitary representation of the dihedral group. In particular each subspace $E_k$ is
given by the direct sum of (infinitely many) one-dimensional irreducible
representations of the cyclic group. (Cf. Appendix \ref{subsec:group-algebras} and references therein).
\end{rem}
We denote by $\fredsa(E)$ the space of all bounded selfadjoint Fredholm operators on $E$ and
let  $\mathcal  A: [0,1] \to \fredsa(E)$ be a continuous
path of  operators commuting with $ \mathcal R$ and $\mathcal
S$; namely
\[
 \mathcal A(\lambda)\,{\mathcal R}= {\mathcal R}\, \mathcal A(\lambda) \textrm{ and }
   \mathcal A(\lambda)\, \mathcal S=  \mathcal S\, \mathcal A(\lambda) \textrm{ for all }
\lambda \in [0,1].
\]
For each $k =0, \dots, \bar n$, let  $ \mathcal A_k$ be the continuous path
defined by
$ \mathcal  A_k\= \mathcal  A\big\vert_{ F_k}:[0,1] \to  \fredsa(
F_k)$.  With respect to this orthogonal decomposition, the path $\mathcal A$ can be
written as 
\[
 \mathcal A(\lambda)= \mathcal A_0(\lambda) \oplus \dots \oplus  \mathcal A_{\bar n}(\lambda) \textrm{ for }\lambda\in [0,1].
 \]
As  direct consequence of the decomposition into (mutually
orthogonal)$\die{n}$-stable modules given in Formula \eqref{eq:decomposition-2-intro} as well as
the additivity property of the spectral flow under direct sum, the following  result holds.
\begin{mainprop}\label{thm:Bott-diedrale}({\bf An equivariant spectral flow
formula\/})
Under the previous notation, we have
\begin{equation}\label{eq:somma-spfl-2}
 \spfl( \mathcal A;  [0,1])= \spfl(\mathcal  A_0; [0,1]) + \dots + \spfl(\mathcal
A_{\bar n}; [0,1]).
  \end{equation}
\end{mainprop}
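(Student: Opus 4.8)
\emph{Proof idea.} The plan is to reduce the identity \eqref{eq:somma-spfl-2} to two facts: (i) the commutation hypotheses force $\mathcal A(\lambda)$ to be block diagonal with respect to the isotypic decomposition \eqref{eq:decomposition-2-intro}, so that $\mathcal A(\lambda)=\bigoplus_{k=0}^{\bar n}\mathcal A_k(\lambda)$, as already recorded just above the statement; and (ii) the spectral flow is additive under finite orthogonal direct sums of paths of bounded selfadjoint Fredholm operators.

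To make (i) precise, I would first observe that, $\mathcal R$ being normal and commuting with $\mathcal A(\lambda)$, the operator $\mathcal A(\lambda)$ leaves invariant every spectral subspace of $\mathcal R$, in particular each $E_j=\ker(\mathcal R-\zeta_n^j\Id_E)$; by the definition \eqref{eq:Fk-intro} each $F_k$ is a finite orthogonal sum of such $E_j$'s, hence $\mathcal A(\lambda)F_k\subseteq F_k$. Since $\mathcal A(\lambda)$ is selfadjoint and $F_k$ is closed and invariant, $F_k^\perp$ is invariant too, so $F_k$ \emph{reduces} $\mathcal A(\lambda)$; running over $k=0,\dots,\bar n$ and using \eqref{eq:decomposition-2-intro} gives the block decomposition $\mathcal A(\lambda)=\mathcal A_0(\lambda)\oplus\cdots\oplus\mathcal A_{\bar n}(\lambda)$. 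It then remains to check that each $\mathcal A_k$ is a genuine continuous path in $\fredsa(F_k)$: selfadjointness is immediate; Fredholmness follows from $\ker\mathcal A_k(\lambda)=\ker\mathcal A(\lambda)\cap F_k$ being finite dimensional, $\im\mathcal A_k(\lambda)=\im\mathcal A(\lambda)\cap F_k$ being closed, and $\coker\mathcal A_k(\lambda)\cong\ker\mathcal A_k(\lambda)$ by selfadjointness; and norm continuity is inherited from $\mathcal A$ via $\norm{\mathcal A_k(\lambda)-\mathcal A_k(\mu)}\leq\norm{\mathcal A(\lambda)-\mathcal A(\mu)}$, since restricting an operator to a reducing subspace does not increase its norm.

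Finally I would invoke (ii): for paths $B_k\colon[0,1]\to\fredsa(F_k)$, $k=0,\dots,\bar n$, one has $\spfl\left(\bigoplus_k B_k;[0,1]\right)=\sum_k\spfl(B_k;[0,1])$, a standard property of the spectral flow; here it is entirely unproblematic because only finitely many summands occur, and if some $F_k$ vanishes (for instance $k=n/2$ with $n$ odd) the corresponding term is simply $0$. Applying this with $B_k=\mathcal A_k$ yields \eqref{eq:somma-spfl-2}. I do not expect a genuine obstacle here: the only step that uses the hypothesis rather than being pure bookkeeping is the verification that the restrictions remain Fredholm on the $F_k$, and that is exactly the point where commutation with $\mathcal R$ and $\mathcal S$ together with the orthogonality of the isotypic decomposition enters.
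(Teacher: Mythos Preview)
Your proposal is correct and follows exactly the route the paper takes: the paper records the result as ``a direct consequence of the decomposition into (mutually orthogonal) $\die{n}$-stable modules \dots\ as well as the additivity property of the spectral flow under direct sum,'' without spelling out the details. Your write-up actually supplies the checks (invariance of each $F_k$, Fredholmness and norm-continuity of the restrictions) that the paper leaves implicit; note, incidentally, that for this particular proposition only the commutation with $\mathcal R$ is used, since the $F_k$ are built from the spectral subspaces of $\mathcal R$ alone.
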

\begin{rem}
It is worth to observe that all the spectral flow formulas can be established for more general class of paths
of selfadjoint Fredholm operators (e.g. for gap continuous path of selfadjoint Fredholm operators).
\end{rem}
For each $k=0, \dots, \bar n$ and $h=0, \dots, n-1$, we define the
closed subspaces
\begin{equation}\label{eq:i0}
F_{k,h}^\pm=\Set{u \in F_k| \mathcal S \mathcal R^h\, u = \pm \, u} \quad \textrm{ and }\quad 
 F_{h}^\pm=\bigoplus_{k=1}^{\bar n} F_{k,h}^\pm
\end{equation}
Since $ \mathcal R\big |_{E_0}=\Id$,   then $F_{0,h}^\pm=F_{0,0}^\pm=\Set{u \in F_0|
 \mathcal S\, u = \pm \, u} .   $
If  $n$ is even, $ \mathcal R\big|_{E_{n/2}}=-\Id$, so we have
$F_{n/2,h}^\pm=\Set{u \in F_{n/2}| (-1)^h \mathcal S\, u = \pm \, u} . $
It is worth to note  that, for $k=1, \dots, \myfloor{(n-1)/2}$, we get
\[
F_{k,h}^\pm= \Set{\begin{bmatrix} I\\ \mathcal S
\mathcal  R^h\end{bmatrix}\, z| z \in E_k}
\]
and by a straightforward calculation for any $h=0, \dots, n-1$, also that  
\[
\big( \mathcal S\mathcal R^{h}\big) \mathcal K_\lambda= \mathcal K_\lambda
\textrm{ where }
\mathcal K_\lambda \= \ker \mathcal A_\lambda\qquad \textrm{ and }
 \lambda \in [0,1].
\]
In the case of $\die{n}$-equivariant path of bounded selfadjoint Fredholm operators, the parity
of the spectral flow actually depends only on the restriction of the path on the finite dimensional subspace
$E_0$  (resp. $E_0, E_{n/2}$) if $n$ is odd (resp. $n$ is even).
\begin{mainprop}\label{thm:parity-equi}
Let $\mathcal A \in \mathscr C^0\big([0,1]; \fredsa(E)\big)$ a $\die{n}$-equivariant path. If
\begin{itemize}
\item  $n$ is odd, then
\[
\spfl( \mathcal A;[0,1])\equiv\spfl\left( \mathcal A\big|_{E_0};[0,1]\right) \mmod
 \]
\item $n$ is even, then
\[
\spfl(\mathcal A;[0,1])\equiv \spfl\left(\mathcal A\big |_{E_0};[0,1]\right)+\spfl\left(\mathcal
A\big|_{E_{n/2}};[0,1]\right)\mmod .
\]
\end{itemize}
\end{mainprop}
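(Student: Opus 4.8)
Before seeing the authors' argument, here is the route I would take.

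\textbf{Overall plan.} The plan is to deduce both parity statements from the isotypical decomposition \eqref{eq:decomposition-2-intro} together with the additivity of the spectral flow, the point being that each ``two–dimensional'' isotypical summand $F_k$ with $1\le k\le\myfloor{(n-1)/2}$ contributes an \emph{even} integer to $\spfl(\mathcal A;[0,1])$; once this is known, reducing the additivity formula modulo $2$ leaves only the contributions of $E_0$ (and of $E_{n/2}$ when $n$ is even).

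\textbf{Refining the splitting and invoking additivity.} Because $\mathcal A(\lambda)$ commutes with $\mathcal R$ for every $\lambda$, it preserves each spectral space $E_k=\ker(\mathcal R-\zeta_n^k\Id_E)$; being selfadjoint it also preserves the orthogonal complement of each $E_k$, so $\mathcal A(\lambda)$ is block–diagonal for the $\mathcal A$–invariant orthogonal splitting
\[
E=E_0\ \oplus\ \bigoplus_{k=1}^{\myfloor{(n-1)/2}}\bigl(E_k\oplus E_{-k}\bigr)\ \oplus\ \bigl(E_{n/2}\bigr),
\]
where the last summand occurs only when $n$ is even. Each restriction $\mathcal A\big|_{E_k}$ is again a continuous path in $\fredsa(E_k)$: restricting a selfadjoint Fredholm operator to a closed invariant subspace whose orthogonal complement is also invariant yields a Fredholm operator, since kernel, cokernel and range split along the decomposition (one checks $\im\mathcal A(\lambda)\cap E_k=\mathcal A(\lambda)E_k$, which is therefore closed). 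Applying Proposition~\ref{thm:Bott-diedrale}, i.e.\ the additivity of $\spfl$ under orthogonal direct sums, to this finer splitting — and recalling $F_0=E_0$ and $F_{n/2}=E_{n/2}$ — gives
\[
\spfl(\mathcal A;[0,1])=\spfl\bigl(\mathcal A\big|_{E_0};[0,1]\bigr)+\sum_{k=1}^{\myfloor{(n-1)/2}}\Bigl(\spfl\bigl(\mathcal A\big|_{E_k};[0,1]\bigr)+\spfl\bigl(\mathcal A\big|_{E_{-k}};[0,1]\bigr)\Bigr)+\varepsilon_n\,\spfl\bigl(\mathcal A\big|_{E_{n/2}};[0,1]\bigr),
\]
with $\varepsilon_n=1$ for $n$ even and $\varepsilon_n=0$ for $n$ odd.

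\textbf{The core step.} It remains to show $\spfl(\mathcal A\big|_{E_{-k}};[0,1])=\spfl(\mathcal A\big|_{E_k};[0,1])$ for $1\le k\le\myfloor{(n-1)/2}$. From $\mathcal S^2=(\mathcal S\mathcal R)^2=\Id_E$ one gets $\mathcal R\mathcal S=\mathcal S\mathcal R^{-1}$, hence $\mathcal S$ maps $E_k$ \emph{unitarily} onto $E_{-k}$ with $(\mathcal S\big|_{E_k})^{-1}=\mathcal S\big|_{E_{-k}}$. Since $\mathcal A(\lambda)$ commutes with $\mathcal S$ and preserves $E_{\pm k}$, restricting $\mathcal A(\lambda)\mathcal S=\mathcal S\mathcal A(\lambda)$ to $E_k$ gives
\[
\mathcal A(\lambda)\big|_{E_{-k}}=\bigl(\mathcal S\big|_{E_k}\bigr)\,\mathcal A(\lambda)\big|_{E_k}\,\bigl(\mathcal S\big|_{E_k}\bigr)^{-1}\qquad\text{for every }\lambda\in[0,1],
\]
so the paths $\mathcal A\big|_{E_k}$ and $\mathcal A\big|_{E_{-k}}$ are conjugate through a \emph{fixed} (i.e.\ $\lambda$–independent) unitary; as the spectral flow is invariant under such conjugation, their spectral flows agree. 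Substituting this into the previous formula the middle sum becomes $2\sum_{k=1}^{\myfloor{(n-1)/2}}\spfl(\mathcal A\big|_{E_k};[0,1])$, an even number, and reducing modulo $2$ yields exactly the two asserted congruences (for $n$ odd the $E_{n/2}$–term is absent; for $n$ even it survives).

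\textbf{Where the work sits.} The step I expect to require the most care is the well–posedness of the reduced data — that each $\mathcal A\big|_{E_k}$ is genuinely a path of \emph{Fredholm} selfadjoint operators on $E_k$ and that the additivity of Proposition~\ref{thm:Bott-diedrale} transfers verbatim to the finer $\mathcal A$–invariant orthogonal splitting — together with the conjugation invariance of the spectral flow under a fixed unitary. Beyond these technical underpinnings the argument is purely formal; in particular the full dihedral structure enters only through the intertwiner $\mathcal S\colon E_k\to E_{-k}$.
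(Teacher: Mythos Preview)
Your proof is correct and follows the same overall architecture as the paper: decompose $E$ along the $\mathcal R$-eigenspaces, apply additivity of the spectral flow, and show the middle blocks $F_k=E_k\oplus E_{-k}$ contribute even integers. The paper packages the evenness as a separate statement (its Proposition~\ref{thm:new-spfl-formulae}) and then deduces Proposition~\ref{thm:parity-equi} in one line.

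The one genuine difference lies in how the equality $\spfl(\mathcal A|_{E_k})=\spfl(\mathcal A|_{E_{-k}})$ is obtained. The paper argues via \emph{complex conjugation}: it notes $z\in\ker\mathcal A_\lambda|_{E_k}\iff \bar z\in\ker\mathcal A_\lambda|_{E_{-k}}$ and checks that the crossing forms match, after invoking a regularization result to reduce to regular crossings. Your route instead exploits the \emph{dihedral} generator $\mathcal S$ as a fixed unitary intertwiner $E_k\to E_{-k}$, then appeals to conjugation invariance of the spectral flow. Your argument is slightly cleaner in the abstract setting, since it uses only the stated hypothesis (commutation with $\mathcal S$) rather than an implicit reality assumption on $\mathcal A$, and it avoids crossing forms altogether. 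On the other hand, the paper's crossing-form computation also yields the finer equalities $\spfl(\mathcal A|_{F_{k,h}^+})=\spfl(\mathcal A|_{F_{k,h}^-})$ needed later, which your shortcut does not directly give.
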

Let $h \in \{1,\dots, n-1\}$  and let  us consider a subgroup $G_h$ in $D_n$ generated
by  $\mathcal S\mathcal R^h$; thus
\[
G_h\= \langle \mathcal S \mathcal R^h| \mathcal R, \mathcal S \textrm{ are the generators of } D_n\rangle.
\]
We let
 $E^+_h$ and  $E^-_h$ be the invariant
subspaces respectively given by  $E_h^+\=\Set{x \in E| x=\mathcal S \mathcal  R^h x }$ and
$E_h^-\=\Set{x \in E|x=-\mathcal  S \mathcal R^h x }$.  It is easy to check that
 \[
 E^\pm_h=\bigoplus_{k=0}^{\bar n} F_{k,h}^\pm.
 \]
 As direct consequence of the additivity properties of the spectral flow, the following spectral flow summation
 formulas hold.
\begin{mainprop}\label{thm:cor2.7}
Let $\mathcal A \in \mathscr C^0\big([0,1]; \fredsa(E) \big)$ pointwise commuting with the generators
$\mathcal R$ and $\mathcal S$ of the unitary representation of $\die{n}$  in $E$
and let  $h=0, \dots, n-1$. Thus we have
 \[
 \spfl(\mathcal  A;[0,1])=\spfl(\mathcal A\big|_{E^+_h};[0,1])+\spfl(
\mathcal A\big|_{E^-_h};[0,1])
\]
 \[
 \spfl(\mathcal A\big |_{E^\pm_h};[0,1])=\sum^{\bar n}_{k=0} \spfl\left(
\mathcal A\big |_{F_{k,h}^\pm}; [0,1]\right) .\]
\end{mainprop}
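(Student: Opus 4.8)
The plan is to deduce both identities from the additivity of the spectral flow under finite orthogonal direct sums, exactly as Proposition~\ref{thm:Bott-diedrale} was obtained; the only work is to check that the path $\mathcal A$ respects the two orthogonal splittings appearing in the statement. For every $h\in\{0,\dots,n-1\}$ the operator $\mathcal S\mathcal R^h\in\U(E)$ is a unitary involution: from the defining relation $\mathcal S\mathcal R\mathcal S=\mathcal R^{-1}$ one gets $\mathcal S\mathcal R^h\mathcal S=\mathcal R^{-h}$, hence $(\mathcal S\mathcal R^h)^2=\Id_E$. Consequently $\tfrac12(\Id_E\pm\mathcal S\mathcal R^h)$ are complementary orthogonal projections whose ranges are precisely $E^\pm_h$, so $E=E^+_h\oplus E^-_h$ is an orthogonal decomposition into the $(\pm1)$-eigenspaces of $\mathcal S\mathcal R^h$. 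Since each $\mathcal A(\lambda)$ commutes with the generators $\mathcal R$ and $\mathcal S$, it commutes with the word $\mathcal S\mathcal R^h$; therefore $E^+_h$ and $E^-_h$ are both invariant under $\mathcal A(\lambda)$. As $E^\pm_h$ and its orthogonal complement $E^\mp_h$ both reduce the selfadjoint Fredholm operator $\mathcal A(\lambda)$, the restriction $\mathcal A(\lambda)\big|_{E^\pm_h}$ is again selfadjoint and Fredholm (kernel, closed range and cokernel split along the decomposition), and $\lambda\mapsto\mathcal A(\lambda)\big|_{E^\pm_h}$ is continuous, so $\mathcal A\big|_{E^\pm_h}\in\mathscr C^0\big([0,1];\fredsa(E^\pm_h)\big)$. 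Writing $\mathcal A(\lambda)=\mathcal A(\lambda)\big|_{E^+_h}\oplus\mathcal A(\lambda)\big|_{E^-_h}$ and invoking additivity of the spectral flow under this orthogonal direct sum yields the first identity.

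For the refinement I would use the identity $E^\pm_h=\bigoplus_{k=0}^{\bar n}F_{k,h}^\pm$ recorded just before the statement, which is an orthogonal sum because the spectral subspaces $F_k$ are mutually orthogonal and $F_{k,h}^\pm=F_k\cap E^\pm_h$. It then suffices to check that $\mathcal A(\lambda)$ preserves each summand: $\mathcal A(\lambda)$ commutes with $\mathcal R$, hence preserves every spectral subspace $F_k$, and by the previous step it preserves $E^\pm_h$; therefore it preserves $F_{k,h}^\pm=F_k\cap E^\pm_h$, and (again, each $F_{k,h}^\pm$ together with its orthogonal complement in $E^\pm_h$ reducing $\mathcal A(\lambda)\big|_{E^\pm_h}$) the restriction $\mathcal A\big|_{F_{k,h}^\pm}$ is a continuous path of selfadjoint Fredholm operators on $F_{k,h}^\pm$. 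Applying additivity of the spectral flow over the finite orthogonal direct sum $E^\pm_h=\bigoplus_{k=0}^{\bar n}F_{k,h}^\pm$ gives the second identity.

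I do not expect any genuine obstacle here: the content is the well-known additivity of the spectral flow under direct sums, already used for Proposition~\ref{thm:Bott-diedrale}, together with the elementary bookkeeping above. The three points that deserve an explicit line are: (i) that $(\mathcal S\mathcal R^h)^2=\Id_E$, so that $E^\pm_h$ genuinely form an orthogonal eigenspace decomposition; (ii) that the restriction of a selfadjoint Fredholm operator to a reducing subspace whose orthogonal complement is also invariant remains Fredholm; and (iii) that $F_{k,h}^\pm=F_k\cap E^\pm_h$, so that invariance of $F_k$ and of $E^\pm_h$ under $\mathcal A$ forces invariance of $F_{k,h}^\pm$.
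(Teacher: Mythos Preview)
Your proof is correct and follows exactly the paper's approach: the paper states this result as ``a direct consequence of the additivity properties of the spectral flow'' without further detail, and you have simply spelled out the routine verifications (that $(\mathcal S\mathcal R^h)^2=\Id_E$, that $\mathcal A(\lambda)$ preserves the resulting eigenspace decomposition, and that $F_{k,h}^\pm=F_k\cap E_h^\pm$) needed to invoke Property~III of the spectral flow.
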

An interesting application of the spectral formula proved in Proposition \ref{thm:cor2.7} is the following.
We assume that $\widetilde {\mathcal A}$ is a selfadjoint essentially positive Fredholm operator; thus, in particular,
$\widetilde {\mathcal A}$ has a finite   Morse index and let us consider the (analytic) path
$\lambda \mapsto \widetilde{\mathcal A}_\lambda\=\mathcal A+\lambda \mathcal K $ where  $\mathcal K$ denotes a compact and  symmetric
 linear bounded   operator.
Denoting by  $\iMor(\widetilde {\mathcal A})$ the Morse index of $\widetilde {\mathcal A}$ (namely the dimension of the negative
spectral space of $\widetilde {\mathcal A}$), it is well-known that
\[
\iMor(\widetilde {\mathcal A})=\spfl(\widetilde {\mathcal A}_\lambda; \lambda\in[0,1]).
\]
(Cf. Appendix \ref{subsec:spectral-flow} for further details).
As direct consequence of Proposition \ref{thm:cor2.7}, the following result holds.
\begin{maincor}\label{prop2.8}
We assume that the path $\mathcal A$ is $\die{n}$-equivariant.  Then 
 \[
 \iMor(\widetilde {\mathcal A})= \iMor\big(\widetilde {\mathcal A}\big|_{E_{h}^+})+\iMor(\widetilde {\mathcal A} |_{E_{h}^-}\big) ,
 \]
 and
\[
\iMor\big(\widetilde {\mathcal A}\big|_{E^\pm_h}\big)=\sum^{\bar n}_{k=0}
\iMor\big(\widetilde {\mathcal A}\big|_{F_{k,h}^\pm}\big) .
\]
\end{maincor}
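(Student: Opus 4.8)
The plan is to reduce the two identities to the spectral flow summation formulas of Proposition~\ref{thm:cor2.7} by means of the equality $\iMor(\widetilde{\mathcal A})=\spfl(\widetilde{\mathcal A}_\lambda;\lambda\in[0,1])$ recalled just above, applied not only on $E$ but on each of the relevant invariant subspaces. So the whole argument amounts to producing, on every subspace in sight, a path to which that identity applies, and then quoting Proposition~\ref{thm:cor2.7}.

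First I would check that the identity $\iMor(\cdot)=\spfl(\cdot_\lambda;[0,1])$ is available on the subspaces $E_h^\pm$ and $F_{k,h}^\pm$. Since $\widetilde{\mathcal A}_\lambda=\mathcal A+\lambda\mathcal K$ is $\die{n}$-equivariant, it commutes with the unitary involution $\mathcal S\mathcal R^h$ and with $\mathcal R$, hence leaves invariant both $E_h^\pm=\ker(\mathcal S\mathcal R^h\mp\Id)$ and each $F_{k,h}^\pm=F_k\cap E_h^\pm$; moreover $E=E_h^+\oplus E_h^-$ and $E_h^\pm=\bigoplus_{k=0}^{\bar n}F_{k,h}^\pm$ are orthogonal decompositions into subspaces reducing the entire path. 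The restriction of an essentially positive selfadjoint Fredholm operator to a closed reducing subspace is again essentially positive selfadjoint Fredholm, because the essential spectrum of the restriction is contained in that of the ambient operator and therefore still lies in $(0,\infty)$; consequently $\widetilde{\mathcal A}\big|_{E_h^\pm}$ and $\widetilde{\mathcal A}\big|_{F_{k,h}^\pm}$ have finite Morse index and the identity $\iMor\big(\widetilde{\mathcal A}\big|_{V}\big)=\spfl\big(\widetilde{\mathcal A}_\lambda\big|_{V};[0,1]\big)$ holds for $V=E_h^\pm$ and for $V=F_{k,h}^\pm$.

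Then I would simply apply Proposition~\ref{thm:cor2.7} to the $\die{n}$-equivariant path $\widetilde{\mathcal A}_\lambda$, $\lambda\in[0,1]$, which gives
\[
\spfl(\widetilde{\mathcal A}_\lambda;[0,1])=\spfl\big(\widetilde{\mathcal A}_\lambda\big|_{E_h^+};[0,1]\big)+\spfl\big(\widetilde{\mathcal A}_\lambda\big|_{E_h^-};[0,1]\big)
\]
and
\[
\spfl\big(\widetilde{\mathcal A}_\lambda\big|_{E_h^\pm};[0,1]\big)=\sum_{k=0}^{\bar n}\spfl\big(\widetilde{\mathcal A}_\lambda\big|_{F_{k,h}^\pm};[0,1]\big).
\]
Rewriting every spectral flow in these two relations through the Morse index identity of the previous step yields exactly the two asserted formulas, and this concludes the argument.

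The only point requiring care — and it is essentially routine — is the inheritance of essential positivity, hence of Fredholmness and finiteness of the Morse index, by the restrictions to $E_h^\pm$ and $F_{k,h}^\pm$, together with the verification that these subspaces genuinely reduce the whole path $\widetilde{\mathcal A}_\lambda$, so that the additivity of the spectral flow under orthogonal direct sums (already invoked in the proof of Proposition~\ref{thm:Bott-diedrale}) is legitimately at our disposal. Everything else is a direct specialization of Proposition~\ref{thm:cor2.7}.
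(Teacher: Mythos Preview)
Your argument is correct and follows exactly the route the paper indicates: the corollary is stated there as a direct consequence of Proposition~\ref{thm:cor2.7}, obtained by converting each spectral flow into a Morse index via the identity $\iMor(\widetilde{\mathcal A})=\spfl(\widetilde{\mathcal A}_\lambda;[0,1])$ on every reducing subspace. Your additional check that essential positivity (hence finiteness of the Morse index) is inherited by the restrictions is a worthwhile clarification that the paper leaves implicit.
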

Given  $T>0$ we denote by  $T\Z$  the lattice generated by $T \in \R$ and we set  $\TT \=\R/(T\Z)\subset \R^2$
be a circle in $\R^2$ of length $T=|\TT|$. Given
$Q \in  \U(2m)$, let $E$ be denote the $W^{1,2}$ closure of the
set of smooth maps $z:\TT \to \C^{2m}$ such that $z(t)=Q z(t+T)$; thus
\begin{equation}\label{eq:Q-loopspace-intro}
E\=\Set{z \in W^{1,2}(\TT;\C^{2m})| z(t)= Q \, z(t+T)}.
\end{equation}
Let  $M \in  \U(2m)$  and $N \in \U(2m)$ be such that the following commutativity
properties holds
 \begin{equation} \label{eq:relazioni-intro}
M^n=Q,\ \   N^2 = \Id,  \ \ N \, M^*= M\, N.
         \end{equation}
For any $k=0, \dots, n-1$, we denote by $E_k$   the closed  subspace of $E$  given by
\[
E_k= \Set{z \in E|M\, z\left(t+\dfrac{T}{n}\right)
=\zeta_n^k\, z(t)}.
\]
and for $k=1,\dots,\myfloor{(n-1)/2}$, we define   $\widehat M_k, \widehat N_k$ the
following block diagonal matrices
\begin{equation}\label{eq:matrici-cappuccio-intro}
 \widehat M_k\= \begin{bmatrix}
              \zeta_n^{-k} M & 0\\
              0 & \zeta_n^k M
             \end{bmatrix}\quad  \textrm{ and } \quad
 \widehat N_k\= \begin{bmatrix}
              0 & \zeta_n^{-k} \, N\\
              \zeta_n^{k} \, N & 0
             \end{bmatrix}.
\end{equation}
As above, we set $F_0=E_0$,  $F_{n/2}=E_{n/2}$ if $n$ is even,$ F_{n/2}=0$ if $n$ is odd and
finally  $F_k\= E_k \oplus E_{-k}$
for $k=1,\cdots,\myfloor{(n-1)/2}$.  Thus, we have
\begin{equation}\label{eq:F_k-intro}
\begin{split}
F_k=\Set{u \in W^{1,2}(\dfrac{\TT}{n}; \C^{2m} \oplus \C^{2m})|\ \
u\=\begin{bmatrix}
                                                            z\\ w
                                                           \end{bmatrix}
                                                           z \in E_k,\ \  w \in
E_{-k}
                                                          \textrm { and }  u(0)=
      \widehat M_k\, u\left(\dfrac{T}{n}\right)}
      \end{split}.
\end{equation}
We now define the two unitary operators on $E$ as follows
\begin{multline}\label{eq:action-1-intro}
\mathcal M: E \ni  z\longmapsto  (\mathcal M\, z)(\cdot) \=M\, z\left(\cdot+ T/n\right)\in E
\textrm { and }\\
\mathcal N: E \ni  z\longmapsto  (\mathcal N\, z)(\cdot) \=N\, z\left(T/n-t\right)\in E.
\end{multline}
By using the properties given in Equation \eqref{eq:relazioni-intro}, it is immediate
to check that $E$ equipped by the action defined in Formula \eqref{eq:action-1-intro}
turns out a $\die{n}$-equivariant space. Moreover, as direct consequence of the relations on the
generators of the dihedral group $\die{n}$, we also get that, for every $h =0, \dots, n-1$,
$(\mathcal N\,\mathcal M^h)^2=\Id$.
In this way we decompose  $E$ defined in Equation \eqref{eq:Q-loopspace-intro}
into a direct sum of $\die{n}$-closed stable subspaces; namely
$E= F_0 \oplus \cdots \oplus F_{\bar n}$ where $F_k$ were defined in Formula
\eqref{eq:F_k-intro}.

Now, let $Q \in \Sp(2m, \R) \cap \OO(2m)$ be such that
\[
 MJ=JM, \quad M^n=Q, \quad N^2 =\Id, NJ=-JN \textrm{ and finally } N=\trasp{N}
\]
and we assume that  $H \in \mathscr C^2(\R\times \R^{2n},\R)$ is a  Hamiltonian function such
that
\begin{equation}\label{eq:simmetrie-H}
 H(t-T/n, M \,z)=H(t,z) \textrm{ and } H(T-t, N\, z)=H(t,z),\footnote{%
 In the autonomous case we assume that
\[ H(M \,z)=H(z) \textrm{ and } H( N\, z)=H(z).\]}
\end{equation}
Given a  Lagrangian subspace
$L $, we
denote by $z$ a  solution of the Hamiltonian system
\begin{equation}\label{eq:Ham-sys-intro}
\begin{cases}
 z'(t)= \nabla H\big(t, z(t)\big), \quad t \in [0,T]\\
 \big(z(0),z(T)\big) \in L.
\end{cases}
\end{equation}
To any solution $z$, we associate   $\gamma:[0,T] \to \Sp(2m)$ which is
 the fundamental solution of the linearized Hamiltonian system along it
 \begin{equation}\label{eq:i1}
 \begin{cases}
  \gamma'(t)= J \, B(t)\,\gamma(t),\qquad t \in [0,T]\\
  \gamma(0)=\Id
 \end{cases}
\end{equation}
where $B(t)\= D^2 H\big(t, z(t)\big) $. Now, to each $z$ we will associate an integer
called the  {\em geometrical index\/} and defined
in terms of the Maslov-type index $\iCLM$ introduced by authors in \cite{CLM94}.
\begin{note}
We will denote by $V_\pm(*)$ the positive and negative spectral space  of the
operator $*$.
\end{note}
\begin{defn}\label{def:Maslov-solution-intro}
We define the
{\em geometric index  of a solution $z$\/} of the Hamiltonian
System given in Equation \eqref{eq:Ham-sys-intro} as
 \[
 \iMas(z)\= \iCLM\big(L, \Gr(\gamma); [0,T]\big)
\]
where  $L=\Gr(Q)$.

Analogously, by setting
 $L^\pm=V_\pm(M\,N)\times V_\pm(N \, M^{n-1}) $, we define  the {\em positive\/} and {\em negative geometric indices\/}, as follows
 \[
 \iMas^\pm(z)\= \iMas\big(L^\pm, \Gr(\gamma); t\in[0,T/2]\big).
\]
\end{defn}
\begin{rem}
Being $L^\pm$ Lagrangian subspaces (cf. Lemma \ref{thm:lagrangian}), it readily follows that
Definition \ref{def:Maslov-solution-intro} is well-given.
\end{rem}
\begin{mainthm}({\bf A $\die{n}$-equivariant  Bott-type formula for Hamiltonian Systems)}
\label{thm:Bott-diedrale-Hamiltoniano-intro}
Let $\gamma$ be the (fundamental) solution of the Hamiltonian system given in Equation \eqref{eq:i1}
and let $\widetilde \gamma\= \diag(\gamma, \gamma)$. For
$k=0, \dots \bar n$, we denote  by $P$  and $Q_k$ the matrices
respectively defined by
$P\= \begin{bmatrix} 0 & MN\\ MN & 0 \end{bmatrix}$ and by  $Q_k\=
\begin{bmatrix} 0 &\zeta_n^{-k(n-1)}N\\ \zeta_n^{k(n-1)} N & 0 \end{bmatrix}$.
Then, we have
 \[   \iMas(z)= \iMas^+(z)+ \iMas^-(z).
 \]
 Moreover, if
\begin{itemize}
\item[(i)]   {\bf $n$  odd\/}
\begin{multline}
\iMas^\pm(z)= \iCLM\left(V_\pm(N), \gamma(t) V_\pm(MN); t \in \left[0,
\dfrac{T}{2n}\right]\right)\\
+ \sum_{k=1}^{\frac{n-1}{2}} \iCLM\left(V_\pm(Q_k), \widetilde \gamma(t) V_\pm(P);
t \in \left[0, \dfrac{T}{2n}\right]\right).
\end{multline}
\item[(ii)] {\bf $n$  even\/}
\begin{multline}
\iMas^\pm(z)= \iCLM\left(V_\pm((-1)^{n-1}N), \gamma(t) V_\pm((-1)^nMN); t \in \left[0,
\dfrac{T}{2n}\right]\right)\\
+  \iCLM\left(V_\pm(N), \gamma(t) V_\pm(MN); t \in \left[0,
\dfrac{T}{2n}\right]\right)\\
+\sum_{k=1}^{\frac{n}{2}-1} \iCLM\left(V_\pm(Q_k), \widetilde \gamma(t)
V_\pm(P); t \in \left[0, \dfrac{T}{2n}\right]\right).
\end{multline}
\end{itemize}
\end{mainthm}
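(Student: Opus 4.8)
\noindent
The strategy is to transport the whole problem onto the $\die{n}$-equivariant loop space $E$ of \eqref{eq:Q-loopspace-intro}, where the geometric index appears as a spectral flow, then to apply the equivariant splitting of Proposition \ref{thm:cor2.7}, and finally to re-read each isotypical block as a $\iCLM$-index on the fundamental domain $[0,T/(2n)]$. \emph{Step 1 (geometric index $=$ spectral flow).} First I would invoke the standard identification of the Maslov-type index with a spectral flow (cf.\ Appendix \ref{subsec:spectral-flow} and the references therein): $\iMas(z)=\iCLM\bigl(L,\Gr(\gamma);[0,T]\bigr)$ equals the spectral flow $\spfl(\mathcal A;[0,1])$ of a path $\mathcal A\colon[0,1]\to\fredsa(E)$ of selfadjoint Fredholm realisations of the linearised Hamiltonian operator $-J\,\tfrac{d}{dt}-B(t)$ on $E$, along a homotopy deforming the symmetric potential $B(t)=D^2H(t,z(t))$ to a fixed non-degenerate reference. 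The symmetry hypotheses \eqref{eq:simmetrie-H} on $H$, together with $MJ=JM$, $NJ=-JN$, $M^n=Q$, $N^2=\Id$, $N=\trasp N$, make the Hamiltonian action functional (and the whole homotopy) invariant under the dihedral action \eqref{eq:action-1-intro}; hence $\mathcal A(\lambda)$ commutes with $\mathcal M$ and $\mathcal N$ for all $\lambda$, so $\mathcal A$ is a $\die{n}$-equivariant path in $\fredsa(E)$.

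\emph{Step 2 (dihedral splitting; the additivity $\iMas=\iMas^++\iMas^-$).} Choosing $h\in\{0,\dots,n-1\}$ so that the involution $\mathcal N\mathcal M^{\,h}$ restricts on $\TT$ to the time-reversal fixing $0$ and $T/2$ — concretely $h=n-1$ — the $\pm1$-eigenspaces $E^\pm_h$ of $\mathcal N\mathcal M^{\,h}$ consist of functions determined by their restriction to $[0,T/2]$ with endpoint conditions in the Lagrangians $L^\pm=V_\pm(MN)\times V_\pm(NM^{n-1})$ of Definition \ref{def:Maslov-solution-intro} (these are Lagrangian by Lemma \ref{thm:lagrangian}). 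The first identity of Proposition \ref{thm:cor2.7} then gives $\spfl(\mathcal A;[0,1])=\spfl(\mathcal A|_{E^+_h};[0,1])+\spfl(\mathcal A|_{E^-_h};[0,1])$, and the same index theorem identifies $\spfl(\mathcal A|_{E^\pm_h};[0,1])$ with the CLM index $\iMas\bigl(L^\pm,\Gr(\gamma);[0,T/2]\bigr)=\iMas^\pm(z)$; comparison with Step 1 yields $\iMas(z)=\iMas^+(z)+\iMas^-(z)$.

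\emph{Step 3 (reduction of each block to $[0,T/(2n)]$; the two formulas).} It remains to compute $\spfl(\mathcal A|_{F^\pm_{k,h}};[0,1])$, since by the second identity of Proposition \ref{thm:cor2.7} we have $\spfl(\mathcal A|_{E^\pm_h};[0,1])=\sum_{k=0}^{\bar n}\spfl(\mathcal A|_{F^\pm_{k,h}};[0,1])$. A function in $F_k$ is $\mathcal M$-quasiperiodic with quasiperiod $T/n$ (see \eqref{eq:Fk-intro}, \eqref{eq:F_k-intro}), so imposing $\mathcal S\mathcal R^{\,h}u=\pm u$ confines it to a fundamental domain of the $\die{n}$-action of length $T/(2n)$; on that interval the restriction becomes a Hamiltonian boundary value problem whose boundary Lagrangians are obtained by conjugating the $t=0$ and $t=T/n$ conditions through $\mathcal M$ and through the reflection, in the spirit of the $\cic{n}$-reduction of \cite[Theorem 1.1]{HS09}. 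For $k=0$ one has $\mathcal R|_{E_0}=\Id$, the reduced symplectic path is $\gamma$, and the block contributes $\iCLM\bigl(V_\pm(N),\gamma(t)V_\pm(MN);[0,T/(2n)]\bigr)$; when $n$ is even there is in addition the block $k=n/2$ with $\mathcal R|_{E_{n/2}}=-\Id$, whose contribution is the variant with $(-1)^{n-1}N$ and $(-1)^{n}MN$. For $1\leq k\leq\myfloor{(n-1)/2}$ the block $F_k=E_k\oplus E_{-k}$ is genuinely two-dimensional over the irreducible: the linearised system \eqref{eq:i1} is blind to the $\zeta_n^{\pm k}$-twist, so the flow on $F_k$ is the doubled path $\widetilde\gamma=\diag(\gamma,\gamma)$, the quasiperiod matrix is $\widehat M_k$ of \eqref{eq:matrici-cappuccio-intro}, and the conjugation produces exactly the Lagrangians $V_\pm(P)$ at $t=0$ and $V_\pm(Q_k)$ at $t=T/(2n)$. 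Summing over $k$ and separating the parity of $n$ (which governs whether the $k=n/2$ term is present) gives the two displayed formulas.

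\emph{Main obstacle.} The substance of the proof is concentrated in Step 3: carrying out the ``symmetric reduction'' that turns a $\die{n}$-equivariant selfadjoint operator path on $\TT$ into a Hamiltonian boundary value problem on $[0,T/(2n)]$ with \emph{exactly} the claimed Lagrangian boundary data, and — the more delicate half — fixing the orientation so that the spectral flow of the reduced operator equals $+\iCLM$ rather than its negative, which needs a normalisation lemma comparing the two invariants on a model non-degenerate path. Keeping track of which conjugates of $M$ and $N$ survive the reduction is precisely what forces $MN$, $NM^{n-1}$, $\widehat M_k$, $P$, $Q_k$, and, for even $n$, the sign-twists $(-1)^{n-1}$, $(-1)^n$, to be the matrices that appear in the statement.
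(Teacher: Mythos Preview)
Your proposal is correct and follows essentially the same route as the paper: define the path $\mathcal A(\lambda)=-J\tfrac{d}{dt}-\lambda B$ on $E$, observe its $\die{n}$-equivariance from the symmetries \eqref{eq:simmetrie-H} and \eqref{eq:relazioni-intro}, apply Proposition~\ref{thm:cor2.7} with $h=n-1$ to split the spectral flow over $E^\pm_h$ and then over the $F^\pm_{k,h}$, and convert each block back to a $\iCLM$-index via the spectral-flow/Maslov identification (Proposition~\ref{prop1.12}). The paper isolates your Step~3 bookkeeping---identifying the boundary Lagrangians $V_\pm(MN)$, $V_\pm(N)$, $V_\pm((-1)^{h+1}MN)$, $V_\pm((-1)^hN)$, $V_\pm(P)$, $V_\pm(Q_k)$ at $t=0$ and $t=T/(2n)$---as a preparatory lemma (Lemma~\ref{thm:eigenspaces-Lagrangians}), so that the theorem's proof proper is just the assembly you describe; your ``main obstacle'' is exactly what that lemma handles.
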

We now  assume that  $Q,S,N\in\OO(m) $ be  such that $S^n=Q$,  $N^2=I_m$,   $SN=N\trasp{S}$ and we define the
Hilbert spaces:
\[
E_\R=\Set{x\in W^{1,2}(\R/(T\Z), \R^m)|
x(t)=Qx(t+T)} \textrm{ and } E=E_\R\otimes\C.
\]
Given the Lagrangian function  $L \in \mathscr C^2\big([0,T]\times \R^{n}, \R\big)$
satisfing the following two properties
\begin{equation}\label{eq:lagrangian-properties-intro}
L(t,x,v)=L(t-T/n, Sx,Sv) \quad \textrm{ and } \quad L(t,u,v)=L(T/n-t,Nx,Nv),
\end{equation}
we associate the Lagrangian action functional $\mathscr S_L: E_\R\to \R$,  defined by
\begin{equation}\label{eq:lagr-action2-intro}
 \mathscr S_L(x)\=\int_0^T L\big(t, x(t), \dot x(t)\big)\, dt.
\end{equation}
We observe that the relations given in Equation \eqref{eq:lagrangian-properties-intro} insure
that $ \mathscr S_L$ is $\die{n}$-equivariant and up to standard regularity arguments  $x$
is a critical point of $\mathscr S_L$ if and only if it is a  classical
solution of the following boundary value problem
\begin{equation}
 \begin{cases}
   \dfrac{d}{dt}\dfrac{\partial L}{\partial v}(t,x,\dot{x})-\dfrac{\partial
L}{\partial u}(t,x,\dot{x})=0, \qquad t \in [0,T]\\
x(0)=Qx(T)  \textrm{ and } \dot{x}(0)=Q\dot{x}(T).
 \end{cases}
\end{equation}
Let  $x$ be a  $D_n$-equivariant critical point; thus $x \in E^{\die{n}}$.  By the second variation of
the Lagrangian action, we get that the index form of $x$ is given by
\begin{equation}
\mathcal {I}(u,v)=\int_0^{T}\{\langle(P\dot{u}+Qu), \dot v\rangle+\langle \trasp{Q}\dot u, v
\rangle +\langle R u,v \rangle \}dt, \qquad   u,v\in E
\end{equation}
where ${\displaystyle P(t)\=\dfrac{\partial^2 L}{\partial
v^2}(t,x(t),\dot{x}(t))}$, ${\displaystyle Q(t)\=\dfrac{\partial^2
L}{\partial u\partial v}(t,x(t),\dot{x}(t))}$ and ${\displaystyle
R(t)\=\dfrac{\partial^2 L}{\partial u^2}(t,x(t),\dot{x}(t))}$.
We set
\[
\widehat{\mathcal   A}=-\dfrac{d}{dt}\left(P(t)\dfrac{d}{dt}+Q(t)\right)+Q^T(t)\dfrac{d}{dt}+R(t)
\]
and we observe that, since $x \in E^{\die{n}}$, then  $\widehat{\mathcal A}$ is $D_n$-equivariant.
By the $D_n$-equivariance of $\mathscr S_L$, we immediately get
\begin{multline}\label{eq:SP-intro}
SP(t+\dfrac{T}{n})=P(t)S,\quad  SQ(t+\dfrac{T}{n})=Q(t)S, \quad
SR(t+\dfrac{T}{n})=R(t)S,\\
NP(\dfrac{T}{n}-t)=P(t)N,\quad  NQ(\dfrac{T}{n}-t)=Q(t)N, \quad
NR(\dfrac{T}{n}-t)=R(t)N.
\end{multline}
\begin{mainthm}({\bf A $\die{n}$-equivariant Bott-type formula for Lagrangian systems\/})
\label{thm:Bott-diedrale-Lagrangiano-intro}
Under the previous notation, for any $h=0,\cdots,n-1$, we have
 \[
\iMor(x)= \iMor\left(\mathcal
{I}|_{E^+_h}\right)+ \iMor\left(\mathcal
{I}|_{E^-_h}\right).
\]
Moreover,
\[
\iMor\left(\mathcal{I}|_{E^\pm_h}\right)=  \sum_{k=0}^{\bar n} \iMor\left(\mathcal
{I}|_{F^\pm_{k,h}}\right).
\]
\end{mainthm}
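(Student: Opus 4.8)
The plan is to deduce Theorem~\ref{thm:Bott-diedrale-Lagrangiano-intro} from Corollary~\ref{prop2.8}, by realising the index form $\mathcal I$ as the quadratic form of a $\die{n}$-equivariant selfadjoint essentially positive Fredholm operator on $E$; no new spectral-flow argument is then required. First I equip $E$ with the inner product $\langle u,v\rangle_{W^{1,2}}=\int_0^T\big(\langle u,v\rangle+\langle\dot u,\dot v\rangle\big)\,dt$. Under the Legendre convexity condition $P(t)\geq\delta\,\Id>0$ --- which is exactly what makes $\iMor(x)$ finite and is tacitly in force --- a standard G\aa rding-type estimate, obtained after absorbing the lower order cross terms $\langle\trasp Q\,\dot u,v\rangle$, yields constants $C,\delta'>0$ with $\mathcal I(u,u)+C\|u\|_{L^2}^2\geq\delta'\|u\|_{W^{1,2}}^2$ for all $u\in E$. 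Writing $\mathcal I(u,v)=\langle\mathcal G u,v\rangle_{W^{1,2}}$ and letting $\iota\in\mathscr L(E)$ be defined by $\langle\iota u,v\rangle_{W^{1,2}}=\langle u,v\rangle_{L^2}$, we see that $\mathcal G$ is bounded and selfadjoint, $\mathcal G+C\iota$ is a positive isomorphism, and $\iota\geq0$ is compact by the Rellich embedding theorem; hence $\mathcal G$ is Fredholm and essentially positive, $\iMor(\mathcal G)<\infty$, and $\iMor(\mathcal G)=\iMor(x)$.

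Next I check that $\mathcal G$ is $\die{n}$-equivariant for a suitable unitary representation. Let $\mathcal R,\mathcal S\in\mathscr L(E)$ be defined by $(\mathcal R x)(t)=S\,x(t+T/n)$ and $(\mathcal S x)(t)=N\,x(T/n-t)$. Using $Q=S^n$, $N^2=\Id$, $SN=N\trasp S$ and the twisted periodicity $x(t)=Q\,x(t+T)$, one verifies that $\mathcal R,\mathcal S$ preserve $E$, are unitary for both $\langle\cdot,\cdot\rangle_{W^{1,2}}$ and $\langle\cdot,\cdot\rangle_{L^2}$ (since $S,N\in\OO(m)$ and the substitutions are measure preserving), and satisfy $\mathcal R^n=\mathcal S^2=(\mathcal S\mathcal R)^2=\Id$, so that they generate a unitary representation of $\die{n}$ on $E$ of the type considered in Section~\ref{subsec:equivariant-setup}. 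Because $x\in E^{\die{n}}$, the intertwining relations \eqref{eq:SP-intro} hold, and a change of variables in the integral defining $\mathcal I$ shows $\mathcal I(\mathcal R u,\mathcal R v)=\mathcal I(\mathcal S u,\mathcal S v)=\mathcal I(u,v)$, i.e. $\mathcal G$ commutes with $\mathcal R$ and $\mathcal S$. Since the $L^2$- and $W^{1,2}$-inner products are both invariant under the representation, $\iota$ is equivariant as well, hence so is $\mathcal A\=\mathcal G+C\iota$; thus $\mathcal G=\mathcal A+(-C\iota)$ is a compact symmetric perturbation of the positive isomorphism $\mathcal A$ --- precisely the situation of Corollary~\ref{prop2.8}, with $\widetilde{\mathcal A}=\mathcal G$.

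It remains to transcribe the conclusion. For every $h$ the operator $\mathcal S\mathcal R^h$ commutes with $\mathcal G$, hence preserves the closed subspaces $E^\pm_h$ and $F^\pm_{k,h}$ of \eqref{eq:i0}; each restriction $\mathcal G|_{E^\pm_h}$, resp. $\mathcal G|_{F^\pm_{k,h}}$, is again selfadjoint, essentially positive and Fredholm, with quadratic form $\mathcal I|_{E^\pm_h}$, resp. $\mathcal I|_{F^\pm_{k,h}}$, so that $\iMor(\mathcal G|_{E^\pm_h})=\iMor(\mathcal I|_{E^\pm_h})$ and $\iMor(\mathcal G|_{F^\pm_{k,h}})=\iMor(\mathcal I|_{F^\pm_{k,h}})$. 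Applying Corollary~\ref{prop2.8} to $\mathcal G$ gives $\iMor(\mathcal G)=\iMor(\mathcal G|_{E^+_h})+\iMor(\mathcal G|_{E^-_h})$ and $\iMor(\mathcal G|_{E^\pm_h})=\sum_{k=0}^{\bar n}\iMor(\mathcal G|_{F^\pm_{k,h}})$ for every $h=0,\dots,n-1$; substituting the identifications above yields the two asserted formulas.

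The only genuinely delicate parts are the G\aa rding inequality together with the identity $\iMor(x)=\iMor(\mathcal G)$ in the first step --- both resting on the standing convexity assumption $P>0$ and on Rellich compactness --- and the (purely notational) check that the subspaces $E^\pm_h$, $F^\pm_{k,h}$ of Section~\ref{subsec:equivariant-setup} are the ones occurring in the statement; the rest is just the additivity of the Morse index under the $G_h$- and isotypic decompositions already packaged in Proposition~\ref{thm:cor2.7} and Corollary~\ref{prop2.8}. Note that, in contrast with the Hamiltonian formula of Theorem~\ref{thm:Bott-diedrale-Hamiltoniano-intro}, the present argument makes no use of the fundamental solution $\gamma$: it is entirely variational.
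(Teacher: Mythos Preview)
Your argument is correct and takes a genuinely different, more elementary route than the paper. The paper proves Theorem~\ref{thm:Bott-diedrale-Lagrangiano-intro} by passing to the Hamiltonian side: it Legendre-transforms the linearised Euler--Lagrange equation, invokes the Morse-type Index Theorem (Proposition~\ref{thm:morse-index-theorem}) to write $\iMor(x)+\iota(L)=\iRel(z)=\iMas(z)$, checks that the auxiliary operator $C$ defining $\iota(L)$ commutes with the induced dihedral action $\widehat g_1,\widehat g_2$ on the phase space, splits the relative index accordingly, and finally appeals to the Hamiltonian formula (Theorem~\ref{thm:Bott-diedrale-Hamiltoniano-intro}). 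Your proof stays entirely on the Lagrangian/variational side: you realise $\mathcal I$ as the quadratic form of a bounded selfadjoint essentially positive Fredholm operator $\mathcal G$ on $E$ via a G\aa rding estimate and Rellich compactness, verify that $\mathcal G$ is $\die{n}$-equivariant from \eqref{eq:SP-intro}, and then read off both formulas from Corollary~\ref{prop2.8}. What your approach buys is economy---no Legendre transform, no fundamental solution $\gamma$, no Maslov index, no relative-index correction term $\iota(L)$---and it makes transparent that Theorem~\ref{thm:Bott-diedrale-Lagrangiano-intro} is really nothing more than the additivity of the (finite) Morse index along a $\die{n}$-invariant orthogonal decomposition. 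What the paper's approach buys is the explicit link to the geometric indices $\iMas^\pm(z)$ of Theorem~\ref{thm:Bott-diedrale-Hamiltoniano-intro}, which is what one actually needs for concrete computations (as in the figure-eight application); your purely variational proof yields the decomposition formula but not, by itself, a way to evaluate each summand via the symplectic path $\gamma$.
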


\paragraph{Morse indices of the Figure-eight orbit for the planar 3BP.}
As promised in the beginning the first application of the Bott dihedral invariant formulas is the computation
of the $G$-equivariant Morse index for the figure-eight orbit in the planar three body problem.  For,
we decompose the path space of the collisionless configuration space
of the planar three body problem with equal masses into $\die{6}$-invariant closed stable
subspaces and we compute the Morse index of the restriction of the
second variation of the Lagrangian action onto these subspaces.

We consider three point-masses particles $x_i \in \R^2$ (thus,
$(x_1, x_2 ,x_3) \in (\R^2)^3$) self-interacting with the Newtonian
gravitational potential $U$ and having unit masses. Newton equations are given
by
\[
\dfrac{d^{2}x_{i}}{dt^{2}}=\dfrac{\partial U}{\partial
x_{i}}(x_{1},x_{2},x_{3}),\qquad  i=1,2,3
\]
where
\[
U(x_{1},x_{2},x_{3}) \= \sum_{i<j}\dfrac{1}{|x_{i}-x_{j}|}
\]
is the  potential  function.  The {\em configuration space \/} of the system having centre
of mass in $0$  is given by
\[
\mathcal X\=\{x=(x_{1},x_{2},x_{3})\in(\R^{2})^{3}|x_{1}+x_{2}+x_{3}=0\}.
\]
For each pair of indices $i,j \in \{1,2,3\}$, let $\Delta_{i,j}\=\Set{x \in \mathcal X|x_i=x_j}$ be the
{\em collision set of the i-th and j-th particle\/} and let
\begin{equation}
 \Delta\=\bigcup_{\substack{{i,j=1}\\ i \neq j}}^3 \Delta_{i,j}
\end{equation}
be the {\em collision set in $\mathcal X$\/} (actually an arrangement of hyperplanes). We also define the {\em collisionless
configuration space\/} as   $\widehat {\mathcal X}\= \mathcal X \setminus \Delta$.
For a fixed period $T$, we consider on the Sobolev space
$E\= W^{1,2}(\R/(T\Z),\widehat{\mathcal X})$
the Lagrangian action functional 
\begin{equation}\label{eq:laphi}
\Phi:W^{1,2}(\R/T\Z,\widehat{\mathcal X})\longrightarrow \R \textrm{ defined by }
 \Phi(x)\=\int\limits_0^T L(x(t),\dot{x}(t))dt
\end{equation}
where $ L(x(t),\dot{x}(t))=\dfrac{1}{2}|\dot{x}(t)|^{2}+U(x(t)) $ is the
Lagrangian function.  It's well-known that the figure-eight orbit can be seen as the minimizer
of the action functional on the  $ D_{6}$-iequivariant loop space (cf.
\cite{Che02a, Che02b, CM00} and references therein),   where
\[
\die{6}=\langle g_{1},g_{2}|g^{6}
_{1}=I_{6},g^{2}_{2}=I_{6},g_{1}g_{2}=g_{2}g_{1}^{-1}\rangle
\]
For any $ x=(x_{1},x_{2},x_{3})\in
W^{1,2}(\R/(T\Z),(\R^{2})^{3})$, we assume that
the generators  $g_{1} $,  $ g_{2} $  of $ D_{6}$ act on $
W^{1,2}(\R/(T\Z),(\R^{2})^{3}) $ as follows:
\[
(g_{1} x)(t)\=-RSx(t+\dfrac{T}{6}) \qquad  (g_{2} x)(t)\=RNx(\dfrac{T}{6}-t)
\]
where $ S=\begin{bmatrix}
0&0&I_{2}&\\I_{2}&0&0&\\0&I_{2}&0&\end{bmatrix} $,
$ N\= \begin{bmatrix} I_{2}&0&0&\\0&0&I_{2}&\\0&I_{2}&0&\end{bmatrix}
$ and $R$ is given by
$ R\=\begin{bmatrix} R_{2}&0&0&\\0&R_{2}&0&\\0&0&R_{2}&\end{bmatrix}
$ with
$ R_{2}\= \begin{bmatrix}1&0\\0&-1\end{bmatrix} .$
(For further details, we refer the interested reader to \cite[Section 5]{HS09}).

For any $h=0, \dots , 5$  the following orthogonal direct sum decomposition  of the $G_h$-equivariant
space $E$ holds
\[
 E_h=E_{h}^{+}\bigoplus E_{h}^{-}
 \]
where
\[
E_{h}^{+}\=\bigoplus\limits_{k=1}^{2}F_{k,h}^{+}\oplus
F_{0,h}^{+}\oplus F_{3,h}^{+} \quad \textrm{ and } \quad
E_{h}^{-}=\bigoplus\limits_{k=1}^{2}F_{k,h}^{-}\oplus F_{0,h}^{-}\oplus
F_{3,h}^{-}.
\]
Being the figure-eight orbit $x$ a $\die{6}$-equivariant critical point and by invoking
the Palais principle of symmetric criticality, it follows that $x$ is a critical point for the restriction of
$\Phi$ to the space  fixed by the action of $G_h$ (that in shorthand notation will be denoted by $\Phi^h$),
where $G_h \subset \die{n}$ is the subgroup generated by
$\mathcal S \mathcal R^h$, where $\mathcal R $ and $\mathcal S$ are the generators of $\die{n}$.
\begin{note}
In  what follows, we denote by $\iMorh{h}(x)$ the Morse index of $x$ as critical point of the restricted  map $\Phi^h$; thus
the Morse index of the second variation of $\Phi^h$ on the space $E^{G_h}$.
\end{note}

 \begin{mainthm}\label{thm:main-3bp}
Under the notation above, the Morse index $\iMorh{h}(x)$ of the Figure-eight orbit in the $G_h$-equivariant space $E_h$
 is given by
\[
\iMorh{h}(x)= \sum \limits_{k=0}^{3} \Big[\iMor(F_{k,h}^{+})+\iMor(F_{k,h}^{-})\Big],
\]
where we denoted by the symbol $\iMor(F_{k,h}^{\pm})$ the Morse index of the restriction of the
second variation of $\Phi$ on $ F_{k,h}^{\pm}$ defined in Equation \eqref{eq:i0}. Furthermore, we have
 \begin{multline}
  \iMor(F^\pm_{0,h})= \iMor(F^\pm_{3,h}) =\iMor(F^\pm_{2,h})=0, \quad   \iMor(F^\pm_{1,h})=1 \qquad \textrm{ for }
  h=0,\cdots,5 \textrm{ and }\\
  \iMor(E_2)=\iMor(E_4)=\iMor(E_4)=0, \quad \iMor(E_1)=\iMor(E_5)=1.
    \end{multline}
    \end{mainthm}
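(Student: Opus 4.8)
The first identity is a direct instance of the $\die n$-equivariant Bott-type formula for Lagrangian systems (Main Theorem~\ref{thm:Bott-diedrale-Lagrangiano-intro}) applied with $n=6$, for which $\bar n=\myfloor{6/2}=3$. By the Palais principle of symmetric criticality $x$ is a $\die6$-equivariant critical point, its index form is the second variation $\mathcal I$ of $\Phi$ at $x$, and the two identities of Main Theorem~\ref{thm:Bott-diedrale-Lagrangiano-intro} read $\iMorh{h}(x)=\iMor(\mathcal I|_{E_h^{+}})+\iMor(\mathcal I|_{E_h^{-}})$ and $\iMor(\mathcal I|_{E_h^{\pm}})=\sum_{k=0}^{3}\iMor(\mathcal I|_{F_{k,h}^{\pm}})$, which compose to the asserted summation formula. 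Applying Corollary~\ref{prop2.8} to the restriction of $\widehat{\mathcal A}$ to the $\die6$-module $F_k$ also gives $\iMor(F_k)=\iMor(F_{k,h}^{+})+\iMor(F_{k,h}^{-})$ for every $h$, and since $\mathcal R$ is $\mathcal I$-isometric ($x$ being $\die6$-invariant), $\mathcal I$ is block-diagonal along the orthogonal $\mathcal R$-eigenspace splitting $F_k=E_k\oplus E_{-k}$, so $\iMor(F_k)=\iMor(E_k)+\iMor(E_{-k})$.

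Next I reduce all the terms to the quantities $\iMor(E_k)$. For $k\in\{1,2\}$ (the two-dimensional blocks) the subspace $F_{k,h}^{\pm}$ is the graph $\Set{(z,\pm\sigma_h z)| z\in E_k}$ of a unitary $\sigma_h\colon E_k\to E_{-k}$ built out of $\mathcal S$ and $\mathcal R^{h}$; evaluating the block-diagonal, $\mathcal S$-isometric form $\mathcal I$ on $(z,\pm\sigma_h z)$ produces exactly $2\,\mathcal I|_{E_k}(z,z)$, so $z\mapsto(z,\pm\sigma_h z)$ identifies $F_{k,h}^{\pm}$ with $E_k$ up to a positive scalar in $\mathcal I$, whence $\iMor(F_{k,h}^{+})=\iMor(F_{k,h}^{-})=\iMor(E_k)$ for every $h$. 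For $k\in\{0,3\}$ this identification is unavailable, but $\iMor(F_{k,h}^{+})+\iMor(F_{k,h}^{-})=\iMor(E_k)$ with both summands $\geq 0$, so each vanishes as soon as $\iMor(E_k)=0$. Hence the whole ``Furthermore'' part follows once we prove $\iMor(E_0)=\iMor(E_2)=\iMor(E_3)=0$ and $\iMor(E_1)=1$ (with $\iMor(E_4)=\iMor(E_2)$ and $\iMor(E_5)=\iMor(E_1)$ by complex conjugation).

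These four integers are computed by reducing $\widehat{\mathcal A}=-d^{2}/dt^{2}+D^{2}U(x(\cdot))$ — here $P=\partial^{2}L/\partial v^{2}=I$, $Q=\partial^{2}L/\partial u\partial v=0$, $R=D^{2}U$ along $x$ — along the cyclic group $\cic6$ and using the standard relation between the Lagrangian Morse index and the Hamiltonian Maslov-type index together with Main Theorem~\ref{thm:Bott-diedrale-Hamiltoniano-intro}: $\iMor(E_k)$ is the number of negative eigenvalues of a finite-rank self-adjoint Sturm--Liouville operator on a short fundamental interval (of length $T/6$, or $T/12$ after the reflection when $k\in\{0,3\}$) whose only non-constant datum is $D^{2}U$ evaluated on the eight, with self-adjoint boundary conditions prescribed by the reflections bounding that interval (the eigenspaces $V_{\pm}$ of the relevant symmetry matrices, cf. Main Theorem~\ref{thm:Bott-diedrale-Hamiltoniano-intro}). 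For $k=0$ the piece $F_{0,0}^{+}$ coincides with $E^{\die6}$, on which the figure-eight is a (local) minimizer of the action (\cite{CM00, Che02a, Che02b}), so $\iMor(F_{0,0}^{+})=0$; the operators attached to $F_{0,0}^{-}$, to $E_3=E_{n/2}$ and to $E_2$ are checked to carry no negative eigenvalue, giving $\iMor(E_0)=\iMor(E_2)=\iMor(E_3)=0$; and the one attached to $E_1$ carries exactly one negative eigenvalue, so $\iMor(E_1)=\iMor(E_5)=1$ and hence, by the reduction above, $\iMor(F_{1,h}^{+})=\iMor(F_{1,h}^{-})=1$ for every $h$. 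Collecting these values completes the proof.

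The step I expect to be the main obstacle is this last, genuinely computational, one. The figure-eight admits no closed form, so the coefficient $D^{2}U(x(t))$ of the reduced Sturm--Liouville operators — equivalently, the linearized monodromy of the eight over a fundamental interval — is accessible only through the numerical, and ultimately computer-assisted, description of the orbit. Everything else (the dihedral decomposition, the isometry $\iMor(F_{k,h}^{\pm})=\iMor(E_k)$ for $k\in\{1,2\}$, the reduction to a one-variable eigenvalue problem on a short interval) is structural and \emph{uniform in $h$}; turning the qualitative ``no negative eigenvalue / exactly one negative eigenvalue'' information into the exact integers $0$ and $1$ for each of $E_0,\dots,E_3$ is where the real analytic work lies.
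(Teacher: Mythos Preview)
Your structural reductions are correct and match the paper: the summation formula is exactly Main Theorem~\ref{thm:Bott-diedrale-Lagrangiano-intro} with $n=6$, and your identification $\iMor(F_{k,h}^{\pm})=\iMor(E_k)$ for $k\in\{1,2\}$ via the graph map $z\mapsto(z,\pm\sigma_h z)$ is the Morse-index version of Proposition~\ref{thm:new-spfl-formulae}. The paper arrives at the same reduction --- everything hinges on the four integers $\iMor(E_0),\iMor(E_1),\iMor(E_2),\iMor(E_3)$.

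Where you diverge is in extracting those four integers. You propose to compute each $\iMor(E_k)$ directly by analysing the reduced Sturm--Liouville operator on a fundamental interval, which (as you rightly flag) requires fresh numerical input since the eight has no closed form. The paper takes a shorter route: it imports three integers already computed (numerically, via Matlab) in \cite[Remark~5.12]{HS09}, namely the full Morse index $\iMor(x)=2$ and the Morse indices in the spaces fixed by the cyclic subgroups $\Z_2=\langle\widetilde g_1^{\,3}\rangle$ and $\Z_3=\langle\widetilde g_1^{\,2}\rangle$ of $\cic6$, both equal to $0$. Those fixed spaces are $F_0\oplus F_2$ and $F_0\oplus F_3$ respectively, so the two vanishings force $\iMor(F_0)=\iMor(F_2)=\iMor(F_3)=0$; then $\iMor(x)=\sum_k\iMor(F_k)=2$ yields $\iMor(F_1)=2$ and hence $\iMor(E_1)=\iMor(E_5)=1$. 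Three imported numbers determine all four $\iMor(F_k)$ by elementary algebra, and no new eigenvalue analysis is performed. Your approach is sound in principle but replaces this linear-algebra step by four separate spectral computations, essentially redoing work already recorded in \cite{HS09}.
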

    \begin{rem}
Being a minimizer on the $\die{6}$-equivariant loop space of
the collision manifold, this in particular implies that $\iMor(F_{0,0}^{+})=0$. (In fact,  the restriction of $\mathcal R$ on $E_0$ is the identity
as well as the restriction of $\mathcal S$ on $F_{0,0}^+$). We point out that
the claims  $\iMor(x)=2$, $\iMorh{2}(x)=\iMorh{3}(x)=0$, follows directly from
 \cite[Remark 5.12]{HS09}; it is worth noticing that these results
 were numerically obtained by using Matlab.
\end{rem}
\paragraph{A hyperbolicity criterion for reversible Lagrangian systems.}
The last application of the theory developed is related to study the strongly instability and hyperbolicity
of dihedral equivariant critical points. For, let $L \in \mathscr C^2([0,T] \times \R^{2m}, \R)$  be a Lagrangian function
 satisfying the {\em Legendre convexity condition\/} and, as before, we
consider the Lagrangian action functional
\begin{equation}
\mathscr S_L:E_\R \to \R \textrm{ defined by } \mathscr S_L(x)=\int_0^T L\big(t, x(t), \dot x(t)\big)\, dt,
\end{equation}
where $E_\R\=W^{1,2}([0,T]; \R^m)$. If $x$ is a critical point, by the second variation of $\mathscr S_L$
we get the associated index form given by
\begin{equation}
\mathcal {I}_{\R}(\xi,\eta)=\int_0^{T}\Big[\langle(P\dot{\xi}+Q\xi),\dot{\eta}\rangle+\langle \trasp{Q}\dot{\xi},
\eta\rangle+\langle R\xi, \eta\rangle\Big]dt, \qquad  \xi,\eta\in E_\R.
\end{equation}
For any $\omega \in \U$, we let
\[
E^{1}(\omega)\=\Set{u \in E | u(0)=\omega\, u(T)}
\]
where we denoted by $E$  the complexification of $E_\R$; i.e. $E\=E_\R \otimes \C$. The corresponding $\omega$-index
form is given by
\begin{equation}\label{eq:index-form-complex-intro}
\mathcal {I}_\omega(\xi,\eta)=\int_0^{T}\Big[\langle(P\dot{\xi}+Q\xi),\dot{\eta}\rangle+\langle \trasp{Q}\dot{\xi},
\eta\rangle+\langle R\xi,  \eta\rangle \Big]dt, \qquad   \xi,\eta\in E
\end{equation}
where we extended  the Euclidean product  in $\R^m$,  $\langle\cdot, \cdot \rangle$ to the standard Hermitian product in $\C^m$.
 In what follows we will  denote by $\iMor(\omega,x)$ the Morse
index of $\mathcal I_\omega$ on $E^1(\omega)$.
Let us consider   the linear Sturm differential operator
\begin{equation}\label{eq:MS-system-intro}
\mathcal A \xi\=-\dfrac{d}{d t} \big(P(t) \dot \xi + Q(t) \xi\big) + \trasp{Q}(t)\dot \xi + R(t) \xi, \qquad t \in [0,T].
\end{equation}
By using the Legendre transform we reduce the second order system $\mathcal A \xi =0$ to the linear Hamiltonian system
\begin{equation}\label{eq:hamsys-2}
\dot z = J B(t) z, \qquad t \in [0,T]
\end{equation}
where
\[
B(t)\= \begin{bmatrix}
P^{-1}(t) & - P^{-1}(t) Q(t)\\
-\trasp{Q}(t)P^{-1}(t)& \trasp{Q}(t) P^{-1}(t)Q(t)-R(t)
\end{bmatrix}.
\]
Let $\gamma$ be the fundamental solution of the Hamiltonian system given in Equation \eqref{eq:hamsys-2} and
let $M_{T}\=\gamma(T)$ be the induced monodromy matrix. Let $\varepsilon >0$ and let us denote
 by $\mathscr U_\varepsilon(M_T)$ the $\varepsilon$ neighborhood of $M_T$ in $\Sp(2m)$
(with respect to the operator topology induced by $\Lin(\R^m)$.
\begin{defn}\label{def:strongly-stable-intro}
A $T$-periodic orbit of the Hamiltonian system given in Equation \eqref{eq:hamsys-2}
is termed {\em strongly stable\/} if there exists $\varepsilon >0$ such that
\[
\textrm{ for every } M \in \mathscr U_\varepsilon(M_T) \Rightarrow M \textrm{ is linearly stable. }
\]
\end{defn}
The next result put on evidence the prominent role played  by the Neumann boundary condition with respect to the other selfadjoint boundary conditions on the
Sturm operator $\mathcal A$ for controlling the strong stability and  the hyperbolicity.
\begin{mainthm}\label{thm:1-ultimasezione-intro}
Let $S, N \in \OO(m)$ be such that $S^{n}= \Id_{m}$ and $N^{2}=(NS)^{2}=\Id_{m}$ acting on $E_\R$ dihedrally through  the
action given by
\begin{multline}\label{eq:action-2-intro}
\mathcal S: E \ni  z\longmapsto  (\mathcal S\, z)(\cdot) \=S\, z\left(\cdot+ T/n\right)\in E_\R
\textrm { and }\\
\mathcal N: E \ni  z\longmapsto  (\mathcal N\, z)(\cdot) \=N\, z\left(T/n-t\right)\in E_\R.
\end{multline}
Under the above notation,  if f the restriction of $\mathcal A$ given in Equation \eqref{eq:MS-system-intro} on
\begin{equation}
E^2(T/(2n))\=\Set{u \in W^{2,2}([0,T/(2n)], \C^{m})| \dot{u}(0)=\dot{u}(T/(2n))=0}
\end{equation}
is positive semi-definite, then $x$ can not be strongly stable.
\end{mainthm}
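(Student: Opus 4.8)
The plan is to argue by contradiction: from the hypothesis I shall deduce that the $\omega$--Morse index $\iMor(\omega,x)$ of $\mathcal I_\omega$ on $E^1(\omega)$ vanishes for \emph{every} $\omega\in\U$, and then show that this contradicts strong stability of $x$. The mechanism is an elementary remark, used repeatedly: the \emph{Neumann form} --- the quadratic form associated with the operator $\mathcal A$ of \eqref{eq:MS-system-intro} with domain $E^2(T/(2n))$, i.e.\ the index form on the whole of $W^{1,2}\big([0,T/(2n)];\C^m\big)$ --- has the \emph{largest form domain} among all the forms produced from the same integrand by imposing \emph{linear} conditions on the endpoint values $u(0)$ and $u(T/(2n))$; hence, if it is positive semi--definite, so is each of its restrictions to such a closed subspace, and every such restriction has vanishing Morse index.

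First I would carry out the standard reduction. By the Legendre transform $x$ corresponds to the linear Hamiltonian system \eqref{eq:hamsys-2} with fundamental solution $\gamma$ and monodromy $M_T=\gamma(T)$, and by the Morse index theorem $\iMor(\omega,x)$ coincides with the (non--negative) $\omega$--Maslov--type index $\iota_\omega(\gamma)$. The hypotheses $S^n=\Id_m$, $N^2=(NS)^2=\Id_m$ give $NS^kN=S^{-k}$, while $\die{n}$--equivariance forces the intertwining relations \eqref{eq:SP-intro}; passing to $\hat S\=\diag(S,S)\in\Sp(2m)$ and to the anti--symplectic involution $\hat N\=\diag(N,-N)$, these become $B(t+T/n)=\hat S^{-1}B(t)\hat S$ and $B(T/n-t)=\hat N B(t)\hat N$, whence, with $\phi\=\gamma(T/(2n))$, one obtains $\gamma(T/n)=\big(\hat N\phi^{-1}\hat N\big)\phi$ and $M_T=\prod_{j=n-1}^{0}\hat S^{-j}\big(\hat N\phi^{-1}\hat N\big)\phi\,\hat S^{j}$. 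This exhibits $M_T$ as a reversible product of anti--symplectic involutions built from $\hat N,\hat S$, all of whose spectral data are governed by the single path $\gamma|_{[0,T/(2n)]}$; the argument below does not use the factorization, but it makes transparent why the fundamental domain is the right object.

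Second I would propagate the hypothesis. For each $k\geq 1$, the $kT$--periodic problem carries the larger dihedral group $\die{kn}$ --- its cyclic part being the rotation by $T/n$, which on the $k$--fold cover has order $kn$, so that its fundamental domain is still $[0,T/(2n)]$, with endpoint reflections governed by $N$ and $NS^{n-1}$ --- and $x$, being $T$--periodic, is again a $\die{kn}$--equivariant critical point. Applying Theorem~\ref{thm:Bott-diedrale-Lagrangiano-intro} to this iterate writes the periodic Morse index $\iMor(x^{(k)})$ of the $k$--th iterate of $x$ as a finite sum of Morse indices of boundary value problems on $[0,T/(2n)]$; after unwinding the doubled identifications of the $F_k$'s, each summand is the restriction of one or two copies of the Neumann form to a closed subspace cut out by (roots--of--unity twists of) the $\pm1$--eigenspaces of $N$ and $NS^{n-1}$, so by the remark each summand vanishes and $\iMor(x^{(k)})=0$. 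On the other hand the order--$k$ cyclic action $z\mapsto z(\cdot+T)$ decomposes the $kT$--periodic loop space orthogonally into the isotypical pieces $E^1(\omega)$, $\omega^k=1$, whence $\iMor(x^{(k)})=\sum_{\omega^k=1}\iMor(\omega,x)$ --- the Morse--index incarnation of the classical Bott iteration formula \eqref{eq:classical-Bott} --- a sum of non--negative integers; therefore $\iMor(\omega,x)=0$ for every $k$--th root of unity $\omega$, and, letting $k$ vary, for every root of unity. Since $\theta\mapsto\iMor(e^{i\theta},x)$ is a non--negative, integer--valued function which is locally constant off the finitely many arguments of eigenvalues of $M_T$ and lower semicontinuous everywhere, its vanishing on the dense set of roots of unity forces $\iMor(\omega,x)=0$ for all $\omega\in\U$. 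Now suppose, for contradiction, that $x$ is strongly stable; then $M_T$ is linearly stable, and it has neither $1$ nor $-1$ as an eigenvalue, for a $\pm\Id$ block on a symplectic eigenspace can be perturbed within $\Sp(2m)$ to a hyperbolic block, destroying linear stability arbitrarily near $M_T$. Hence $M_T$ has a conjugate pair of eigenvalues $e^{\pm i\theta_0}$, $\theta_0\in(0,\pi)$, which by strong stability is of definite Krein type; by the splitting--number description of the jumps of $\iota_\omega(\gamma)=\iMor(\omega,x)$, this index jumps by $\pm\,\nu_{e^{i\theta_0}}(\gamma)\ne 0$ as $\theta$ crosses $\theta_0$, contradicting $\iMor(\cdot,x)\equiv 0$. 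Therefore $x$ cannot be strongly stable.

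The main obstacle is the second step, and precisely the identification of the summands. One must check that \emph{every} piece produced by applying Theorem~\ref{thm:Bott-diedrale-Lagrangiano-intro} to the $k$--th iterate is genuinely the restriction of a copy of the Neumann form to a \emph{linear} closed subspace of a $W^{1,2}$ space on $[0,T/(2n)]$: this means following the boundary data through the ``$\C^{2m}$''--doubling of the $F_k$'s, through the $\die{kn}$--twist, and through the $\pm1$--eigenspaces of the reflection matrices, and verifying that only closed linear constraints on $u(0),u(T/(2n))$ survive --- in particular that the sign change induced on $Q$ by each reflection about a fixed point does not spoil the comparison with the Neumann form. Granting this, the remaining ingredients --- the $\die{n}$--equivariant Bott formula, the (cyclic) classical Bott formula, the Morse index theorem, and the splitting--number/Krein description of the jumps of $\iota_\omega$ --- are either proved earlier in the paper or entirely standard.
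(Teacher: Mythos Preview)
Your approach is correct but considerably more elaborate than the paper's. Both arguments arrive at the same intermediate conclusion, $\iMor(\omega,x)=0$ for every $\omega\in\U$, and then derive non--strong--stability from it; the difference lies in how that vanishing is obtained. The paper never invokes Theorem~\ref{thm:Bott-diedrale-Lagrangiano-intro}, iterates, or a density argument. It proceeds by a short chain of inclusions: positive semi--definiteness of $\mathcal A$ on the Neumann domain $E^2(T/(2n))$ is equivalent to positive semi--definiteness of the index form $\mathcal I$ on the \emph{whole} space $W^{1,2}([0,T/(2n)],\C^m)$; a single reflection (the $\mathcal N$--action) identifies $E^1(T/n)$ with $E^1_+(T/(2n))\oplus E^1_-(T/(2n))$, both of which are closed subspaces of $W^{1,2}([0,T/(2n)],\C^m)$, so $\mathcal I$ is positive semi--definite on $E^1(T/n)$; the $\omega$--twisted periodic subspace $E^1(T/n,\omega)$ is in turn a closed subspace of $E^1(T/n)$; and finally the \emph{cyclic} Bott decomposition $E^2(T,\omega)=\bigoplus_{z^n=\omega}E^2(T/n,z)$ transports this to $E^2(T,\omega)$ for every $\omega\in\U$ at once. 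Thus the ``main obstacle'' you single out --- tracking the boundary data through the doubled $F_k$'s for each iterate --- simply does not arise in the paper's argument. For the final step, the paper packages your Krein/splitting--number contradiction by citing Ekeland's criterion directly: $\iMor(\omega,x)\equiv 0$ forces all splitting numbers to be $(0,0)$ (index hyperbolicity), which is incompatible with strong stability. Your route has the virtue of exercising the dihedral machinery of the paper, but the paper's route shows that for this particular corollary only the reflection and the classical cyclic decomposition are needed.
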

\begin{maincor}\label{thm:2-ultimasezione-intro}
Under the same assumptions of Theorem \ref{thm:1-ultimasezione-intro} and if
$\mathcal A$ is positive definite on $E^2(T/(2n))$, then $x$  is hyperbolic.
\end{maincor}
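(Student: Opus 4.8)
The plan is to turn hyperbolicity into a statement about the $\omega$-index forms $\mathcal I_\omega$ for every $\omega\in\U$, and then to fold each such form onto the dihedral fundamental domain $[0,T/(2n)]$, where it becomes a restriction of the Neumann form and is thus controlled by the hypothesis.

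First I would record that $x$ is hyperbolic precisely when the monodromy $M_T=\gamma(T)$ has no eigenvalue on $\U$, that is, $\nu_\omega(\gamma)\=\dim_\C\ker\big(M_T-\omega\,\Id\big)=0$ for every $\omega\in\U$. By Legendre convexity the index form $\mathcal I_\omega$ has finite Morse index on $E^1(\omega)$, and the Morse-index theorem identifies $\nu_\omega(\gamma)$ with $\dim\ker\big(\mathcal I_\omega|_{E^1(\omega)}\big)$; in particular $\mathcal I_\omega$ is non-degenerate on $E^1(\omega)$ as soon as it is positive definite there. Hence it suffices to show that positive definiteness of $\mathcal A$ on $E^2(T/(2n))$ forces $\mathcal I_\omega$ to be positive definite on $E^1(\omega)$ for \emph{every} $\omega\in\U$ --- the strict counterpart of Theorem \ref{thm:1-ultimasezione-intro}, where semi-definiteness only produced $\iMor=0$ and so the weaker conclusion ``not strongly stable''.

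For the second step, fix $\omega\in\U$. The reflection $\mathcal N$ about $t=T/(2n)$ and the reflection $\mathcal N\mathcal S^{-1}$ about $t=0$ generate a copy of $\die{n}$ on the loop space; the rotation $\mathcal S^{-1}$ preserves $E^1(\omega)$ while $\mathcal N$ exchanges $E^1(\omega)$ and $E^1(\bar\omega)$, so $E^1(\omega)\oplus E^1(\bar\omega)$ is a $\die{n}$-module and $\mathcal I_\omega\oplus\mathcal I_{\bar\omega}$ a $\die{n}$-invariant quadratic form on it, with associated operator commuting with the action. Applying the abstract equivariant splitting of the Morse index and nullity (Propositions \ref{thm:Bott-diedrale}, \ref{thm:cor2.7} and Corollary \ref{prop2.8}) reduces the positivity of $\mathcal I_\omega\oplus\mathcal I_{\bar\omega}$ to the positivity of the index form on the blocks $F^\pm_{k,h}$. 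A function in such a block is determined by its restriction to $[0,T/(2n)]$, and the relations \eqref{eq:SP-intro} turn $\mathcal I_\omega\oplus\mathcal I_{\bar\omega}$ on it into $2n$ times the free-endpoint (Neumann) form $\int_0^{T/(2n)}\big[\langle P\dot u,\dot u\rangle+2\,\Real\langle Qu,\dot u\rangle+\langle Ru,u\rangle\big]\,dt$ evaluated on that restriction, which lies in a subspace of $W^{1,2}([0,T/(2n)];\C^m)$ cut out by endpoint-value constraints attached to the $\pm$-eigenspaces of $N$ and $NS$. Since the restriction of a positive definite form is positive definite, positivity of $\mathcal A$ on $E^2(T/(2n))$ makes every block positive definite, hence $\mathcal I_\omega\oplus\mathcal I_{\bar\omega}$, hence $\mathcal I_\omega$. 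Therefore $\nu_\omega(\gamma)=0$ for all $\omega\in\U$, i.e. $x$ is hyperbolic.

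I expect the main obstacle to be exactly the identification in the previous paragraph: showing that, for a general $\omega\in\U$ (not merely a root of unity) and for every isotypical block --- including the two-dimensional ones --- the reduced form on $[0,T/(2n)]$ is \emph{precisely} the Neumann form restricted to a subspace of $W^{1,2}([0,T/(2n)];\C^m)$, with no boundary terms surviving the integration by parts (this is where the structure of the reversors built from $N$ and $NS$, the equivariance relations \eqref{eq:SP-intro}, and the fact that the two reflections fix the endpoints of the fundamental domain have to be matched carefully). That is exactly the step that promotes ``$\iMor=0$, so $x$ is not strongly stable'' to ``every $\omega$-index form is definite, so $x$ is hyperbolic''.
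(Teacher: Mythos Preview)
Your overall plan is sound and would give a proof once the block identification is carried out: hyperbolicity is indeed equivalent to $\mathcal I_\omega$ being non-degenerate on $E^1(\omega)$ for every $\omega\in\U$, and it suffices to push positive definiteness up from the Neumann domain on $[0,T/(2n)]$. But the paper reaches the same conclusion by a much shorter route that completely sidesteps the obstacle you flag.

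Instead of an isotypical decomposition of $E^1(\omega)\oplus E^1(\bar\omega)$, the paper simply reruns the argument of Theorem~\ref{thm:1-ultimasezione} with ``positive definite'' in place of ``positive semi-definite''. That argument uses only the single reflection $\mathcal N$ to fold $[0,T/n]$ onto $[0,T/(2n)]$, obtaining that $\mathcal I$ is positive definite on all of $W^{1,2}([0,T/n];\C^m)$ (free endpoints). The key shortcut is then the trivial inclusion: for \emph{every} $\omega$, the twisted space $E^1(T/n,\omega)=\{u:u(0)=\omega\,u(T/n)\}$ is a closed subspace of $W^{1,2}([0,T/n];\C^m)$, so $\mathcal I$ is automatically positive definite there too, with no need to track how $\omega$ interacts with any dihedral blocks. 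A cyclic (not dihedral) splitting $E^2(T,\omega)=\bigoplus_{z^n=\omega}E^2(T/n,z)$ then lifts this to the full period, whence $\mathcal A$ is non-degenerate on $E^2(T,\omega)$ and $\omega\notin\sigma(M_T)$.

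One small wrinkle in your version worth noting: on $E^1(\omega)$ the shift satisfies $\mathcal S^n=\bar\omega\,\Id$, not $\Id$, so $E^1(\omega)\oplus E^1(\bar\omega)$ is not literally a $\die{n}$-module in the sense required by Propositions~\ref{thm:Bott-diedrale}--\ref{thm:cor2.7}; you would first have to twist the rotation generator by an $n$-th root of $\omega$ on one summand and its conjugate on the other to restore $\mathcal R^n=\Id$ while keeping the dihedral relation. This is harmless, but it is exactly the bookkeeping that the paper's inclusion trick makes unnecessary.
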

We close this section by observing that as a direct consequence of Corollary \ref{thm:2-ultimasezione-intro}, we
provide  a completely new proof of \cite[Pag.627, Theorem 2.9]{Off92}.
More precisely, let us consider the second order linear differential operator
\[
\mathcal L= -\dfrac{d^2}{dt^2} + Q(t)
\]
where $Q$ is a $T$-periodic path of symmetric matrices. We recall that the  operator $\mathcal L$ is termed {\em reversible} if
\[
Q(-t)=Q(t) \qquad t \in [0,T].
\]
(Cf., for further details,  \cite{Off92} and references therein).
\begin{maincor}[{\bf Offin 1992\/}] \label{thm:Offin}
We assume that the operator $\mathcal L$ is reversible, non-degenerate (meaning that
 there are no symmetric $T$-periodic solutions of the
equation $\mathcal L u=0$) and have a vanishing Morse index.
Then the associated monodromy matrix $M_T$ is hyperbolic.
\end{maincor}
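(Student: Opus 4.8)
The plan is to realize $\mathcal L$ as the Sturm operator of a reversible \emph{quadratic} Lagrangian and apply Theorem~\ref{thm:1-ultimasezione-intro} and Corollary~\ref{thm:2-ultimasezione-intro} in the minimal degree $n=1$; the role of the two hypotheses on $\mathcal L$ is exactly to turn positive \emph{semi}-definiteness of $\mathcal A$ on the half-period Neumann space into positive \emph{definiteness}.

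First I would fix $n=1$ and $S=N=\Id_m$ and take the Lagrangian $L(t,x,v)\=\tfrac12|v|^2+\tfrac12\langle Q(t)x,x\rangle$, whose unique ($\die 1$-equivariant) critical point is $x\equiv 0$ and whose Euler--Lagrange equation is $\mathcal L x=0$. Since $\partial^2 L/\partial v^2=\Id_m>0$ the Legendre convexity condition holds; the coefficients along $x\equiv 0$ are $P\equiv\Id_m$, $Q\equiv 0$, $R(t)=Q(t)$, so the Sturm operator of Equation~\eqref{eq:MS-system-intro} is exactly $\mathcal A=\mathcal L$ and the reduction \eqref{eq:hamsys-2} returns $\ddot u=Q(t)u$ with the same monodromy matrix $M_T$. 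Reversibility $Q(-t)=Q(t)$ together with $T$-periodicity gives $Q(T-t)=Q(t)$, which with $S=N=\Id_m$ is precisely the equivariance \eqref{eq:lagrangian-properties-intro} for the action of $\die 1\cong\cic 2$ generated by the time reversal $z(\cdot)\mapsto z(T-\cdot)$ (for $n=1$ the ``rotation'' $\mathcal S$ is trivial). Hence the hypotheses of Theorem~\ref{thm:1-ultimasezione-intro} and Corollary~\ref{thm:2-ultimasezione-intro} are met.

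Next I would apply the $\die 1$-equivariant Bott-type formula. Here $\bar n=0$, the only admissible label is $h=0$, and $E=E^+_0\oplus E^-_0$ with $E^{\pm}_0=\{z:\ z(t)=\pm z(T-t)\}$, so Theorem~\ref{thm:Bott-diedrale-Lagrangiano-intro} (equivalently Corollary~\ref{prop2.8}) gives $\iMor(x)=\iMor\big(\mathcal I|_{E^+_0}\big)+\iMor\big(\mathcal I|_{E^-_0}\big)$, and the vanishing Morse index hypothesis forces $\iMor\big(\mathcal I|_{E^+_0}\big)=0$. The key point is then the identification of $\mathcal I|_{E^+_0}$ with the half-period Neumann form: a smooth symmetric loop $z(t)=z(T-t)$ satisfies $\dot z(0)=\dot z(T/2)=0$, and the substitution $s=T-t$ on $[T/2,T]$ together with $Q(T-s)=Q(s)$ gives $\mathcal I(z,z)=2\int_0^{T/2}\big(|\dot z|^2+\langle Q z,z\rangle\big)\,dt$; conversely every $u\in W^{1,2}([0,T/2],\C^m)$ extends by even reflection to an element of $E^+_0$, so this correspondence is onto and identifies $\mathcal I|_{E^+_0}$, up to the harmless factor $2$ and $W^{1,2}$-closure, with the quadratic form of $\mathcal A$ on $E^2(T/(2\cdot 1))=E^2(T/2)$. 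Thus $\iMor\big(\mathcal I|_{E^+_0}\big)=0$ means $\mathcal A$ is positive semi-definite there, and the kernel of $\mathcal I|_{E^+_0}$ is in bijection, again by even reflection and elliptic regularity, with the space of symmetric $T$-periodic solutions of $\mathcal L u=0$.

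Finally I would conclude: the non-degeneracy hypothesis states exactly that this last space is trivial, so $\mathcal A$ is positive \emph{definite} on $E^2(T/2)$, and Corollary~\ref{thm:2-ultimasezione-intro} (with $n=1$, $S=N=\Id_m$) yields that $x\equiv 0$ is hyperbolic, i.e.\ $M_T$ carries no spectrum on the unit circle; this is \cite[Theorem~2.9]{Off92}. I expect the only genuinely delicate step to be the identification in the third paragraph---that the natural endpoint conditions $\dot u(0)=\dot u(T/2)=0$ are exactly the traces of symmetric loops, that the relevant form domains agree after $W^{1,2}$-completion, and that $W^{1,2}$ null vectors are classical symmetric periodic solutions---but this is routine Sturm theory once the $n=1$ dictionary is fixed.
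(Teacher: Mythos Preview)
Your proof is correct and follows essentially the same route as the paper: both take $N=\Id_m$ (you also make $n=1$, $S=\Id_m$ explicit), use reversibility plus periodicity to get $Q(T-t)=Q(t)$, identify the symmetric part $E^+_0$ with the half-period Neumann space $E^2_+(T/2)$ via even reflection, deduce positive semi-definiteness there from $\iMor(x)=0$, upgrade to definiteness via the non-degeneracy assumption, and finish by Corollary~\ref{thm:2-ultimasezione-intro}. Your write-up is in fact more detailed than the paper's, which compresses the identification step and the passage from semi-definiteness to definiteness into one sentence.
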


% % % % % % % % % % % % % % % % % % % % % % % % % % % % % % % % % % %
% % %
% % %
% % % % % % % % % % % % % % % % % % % % % % % % % % % % % % % %

% % % % % % % % % % % % % % % % % % % % % % % % % % % % % % % % % % %
% % %
% % %
% % % % % % % % % % % % % % % % % % % % % % % % % % % % % % % %

\section{Equivariant preliminaries and symmetry constraints}\label{sec:reptheory}

This section is devoted to introduce the basic definitions and notation and to
describe the  results that we will need in the sequel. Our
basic references for the material contained in  this section are
\cite{Ser77,Pal79, Kow, Mac89} and references therein.

\subsection{Equivariant preliminaries}\label{subsec:preliminaries}

Let $G$ be a finite group acting on a (representation) space $X$. The space $X$
is called {\em $G$-equivariant space\/}. We recall that the   {\em isotropy group\/}
or the {\em fixer\/} of $x$ in $G$ is defined as $ G_x\=\Set{g \in G | gx=x}$.
If $H \subset G$ is a subgroup,
the space $X^H \subset X$ consists of all points $x \in X$ which are fixed by $H$.
Given two $G$-equivariant spaces $X$ and $Y$, an {\em equivariant map
$f: X \to Y$\/} is a map having the property that $f(g \cdot x)= g\cdot f(x)$  for all  $g \in G$ and  for all $x \in X$.
We observe that an equivariant map $f:X \to Y$ induces by restriction to the $X^H$ fixed by the
subgroup $H \subset G$ a map $f^H: X^H \to Y^H$. Let $E$ be a separable complex Hilbert space,
with identity $\Id_E$ and let $G$ be a  topological compact group.
By a {\em unitary representation\/} $U$ of such a group $G$ we shall
mean a mapping $G \ni g \longmapsto U_g \in \U(E) $
where $\U(E)$ denotes the group of {\em unitary operators\/} of  $E$, which has the following two properties:
\begin{itemize}
 \item[(i)] $U_{g_1 g_2}=U_{g_1}U_{g_2}$ for every $g_1, g_2 \in G$;
 \item[(ii)] $G \ni g \mapsto U_g(x) \in H$ is a continuous function from $G$ to $H$ for every $u \in H$.
\end{itemize}
\begin{rem}
It is worth noticing that any arbitrary group $G$ equipped with the discrete
topology can be turned into a topological group. Moreover  in this case the continuity is automatic. Furthermore
if $G$ is finite, it is clearly compact.
\end{rem}
For the sake of the reader we briefly recall  in a
very special situation which is sufficient for the theory developed in this paper,  the classical {\em Palais principle of symmetric criticality\/}
proved by author in \cite{Pal79}.

Let $G$ be a finite group acting on $E$ through unitary transformations, $f: E \to \R$ be a
$G$-invariant functional of $\mathscr C^2$-class and let us denote by $f^G$ the map induced by restriction of $f$
on the (closed) subspace $E^G$ fixed by the action of $G$.
\begin{lem}\label{thm:palais}({\bf Palais Principle of Symmetric
Criticality\/})
If $f$ is $G$-invariant then a critical point of $f^G$ in $E^G$ is a critical point of
$f$ in $E$.
\end{lem}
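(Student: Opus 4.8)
The plan is to exploit the fact that, since $G$ is finite and acts on $E$ by unitary operators, the fixed subspace $E^G=\bigcap_{g\in G}\ker(U_g-\Id_E)$ is a \emph{closed linear} subspace of $E$; moreover both $E^G$ and its orthogonal complement $(E^G)^\perp$ are $G$-invariant (for the latter, if $w\perp E^G$ and $u\in E^G$ then $\langle U_g w,u\rangle=\langle w,U_g^{*}u\rangle=\langle w,u\rangle=0$, using unitarity of $U_g$). First I would record that, because $E^G$ is a closed subspace, the restricted functional $f^G=f|_{E^G}$ is again of class $\mathscr C^2$ and its differential at a point $x\in E^G$ is simply the restriction $Df(x)|_{E^G}$; identifying differentials with gradients via the Riesz representation theorem, this means $\nabla f^G(x)=P\,\nabla f(x)$, where $P$ denotes the orthogonal projection of $E$ onto $E^G$.

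The key step is to show that for $x\in E^G$ the gradient $\nabla f(x)$ \emph{already} lies in $E^G$. Differentiating the invariance identity $f(U_g y)=f(y)$ in the variable $y$ and using the chain rule together with $U_g^{*}U_g=\Id_E$, one gets $U_g^{*}\,\nabla f(U_g y)=\nabla f(y)$, that is $\nabla f(U_g y)=U_g\,\nabla f(y)$ for every $y\in E$ and every $g\in G$. Specialising to $y=x\in E^G$, where $U_g x=x$, this reads $U_g\,\nabla f(x)=\nabla f(x)$ for all $g\in G$, i.e. $\nabla f(x)\in E^G$. (Since $G$ is finite, one could equivalently argue by averaging, replacing $\nabla f(x)$ by $\tfrac1{|G|}\sum_{g\in G}U_g\,\nabla f(x)$, which equals $\nabla f(x)$ by the same commutation relation.)

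Combining the two steps, $\nabla f^G(x)=P\,\nabla f(x)=\nabla f(x)$ whenever $x\in E^G$. Hence if $x\in E^G$ is a critical point of $f^G$, so that $\nabla f^G(x)=0$, then $\nabla f(x)=0$, whence $Df(x)=0$ and $x$ is a critical point of $f$ on all of $E$, which is the assertion. There is no genuine obstacle here: the only point requiring care is the commutation relation $\nabla f\circ U_g=U_g\circ\nabla f$ valid for a $G$-invariant $f$ under a unitary action, together with the observation that finiteness (compactness) of $G$ makes $E^G$ a closed, orthogonally complemented, $G$-invariant subspace; everything else is routine Hilbert-space calculus.
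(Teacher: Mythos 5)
Your proof is correct and follows essentially the same route as the paper: differentiate the invariance $f(U_g y)=f(y)$ and use unitarity to obtain $\nabla f(U_g y)=U_g\,\nabla f(y)$, conclude that $\nabla f(x)\in E^G$ for $x\in E^G$, and identify $\nabla f^G(x)$ with $\nabla f(x)$ to finish. Your explicit remark that $\nabla f^G(x)=P\,\nabla f(x)$ via the orthogonal projection onto the closed invariant subspace $E^G$ is a slightly more careful justification of a step the paper treats implicitly, but the argument is the same.
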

\begin{proof}
Since $f$ is $G$-invariant then, for every $g \in G$ we have  $f (U_g\, x)=f(x)$ for
every $x \in E$, where $U_g$ denotes the unitary representation of $g$. Denoting by $\nabla$ the gradient and by
$d$ the differential,  we get
\begin{multline}
df(x)[v]=\dfrac{d}{d\varepsilon}\Big\vert_{\varepsilon=0}f(x+
\varepsilon\, v)=
 \langle \nabla f(x), v\rangle  \\
 d\,f(U_g\,x)[v]=\dfrac{d}{d\varepsilon}\Big\vert_{\varepsilon=0}f\big(U_g(x+ \varepsilon\, v)\big)=
 \langle \nabla f(U_g x), U_g v \rangle=  \langle U_g^*\nabla f(U_g x),
v \rangle,
 \quad \forall\, v \in E
\end{multline}
where we denoted by $U_g^*$  the conjugate transpose (or Hermitian transpose) of
$U_g$. In particular
\begin{equation}\label{eq:chain-rule}
 U_g^*\, \nabla f(U_g\, x)= \nabla f(x)\qquad  x \in
E.
\end{equation}
Thus
\begin{equation}\label{eq:quella-che-si-usa}
 U_g\,\nabla f(x)=\nabla f(U_g\,x) \qquad g \in G, \ \ x \in
E
\end{equation}
and by this it readily follows that if $x \in E^G$ then $\nabla f(x)\in
E^G$ or which is the same $\nabla f(x)=\nabla f^G(x)$.
\\
($\Rightarrow$). In order to prove the  if part, we assume that $x \in E^G$ is
a critical point of $f^G$.
Thus, we have $U_g\, x =x$
and $\nabla f^G(x)=0$. Thus, by Equation \eqref{eq:quella-che-si-usa},
we have
\begin{equation}
 0= \nabla f^G(x)=\nabla f^G(U_g\,x)=\nabla f(U_g\,x)
 = U_g\,\nabla f(x)
\end{equation}
which implies that $x$ is  a critical point of $E$.\\
($\Leftarrow$). The proof of the only if part, readily follows by the previous
arguments.
(For further details, we refer the interested reader to \cite{Pal79}). This
conclude the proof.
\end{proof}
The next result put on evidence a commutativity property enjoyed by the Hessian of a $G$-invariant
functional $f$ at a critical point for $f^G$.
\begin{lem}
 Let $x \in E^G$ be a critical point of $f^G$. If $\Hess  f(x)$ denotes the Hessian matrix
 of  $f$ at $x$, the following commutativity property holds
 \[
  U_g\, \Hess f(x)= \Hess f(x)\, U_g, \qquad \forall \, U_g \in \U(E)
 \]
\end{lem}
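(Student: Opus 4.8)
The plan is to obtain the commutation relation by differentiating the equivariance identity for the gradient that was already established in the proof of Lemma \ref{thm:palais}, and then specializing the resulting identity at the $G$-fixed critical point $x$. Throughout, $\Hess f(x)=D^2 f(x)$ is regarded as the (bounded, selfadjoint) Fréchet derivative at $x$ of the map $E\ni y\mapsto \nabla f(y)\in E$, which is well defined because $f$ is of class $\mathscr C^2$.

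First I would recall Equation \eqref{eq:quella-che-si-usa}, namely $U_g\,\nabla f(y)=\nabla f(U_g\,y)$ for every $g\in G$ and every $y\in E$, where $U_g$ denotes an operator in the unitary representation of $G$ (it is for these $U_g$, and not for an arbitrary element of $\U(E)$, that the assertion is meant). Both sides are $\mathscr C^1$ functions of $y$. Differentiating the left-hand side at $y=x$ in an arbitrary direction $v\in E$ gives $U_g\,\Hess f(x)\,v$, since $U_g$ is a fixed bounded linear operator; differentiating the right-hand side and applying the chain rule, together with the fact that the derivative of $y\mapsto U_g\,y$ is $U_g$ itself, gives $\Hess f(U_g\,x)\,U_g\,v$. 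Hence
\[
U_g\,\Hess f(x)\,v=\Hess f(U_g\,x)\,U_g\,v\qquad\text{for all }v\in E.
\]

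Now I would invoke the hypothesis $x\in E^G$, which means $U_g\,x=x$ for every $g\in G$, so that $\Hess f(U_g\,x)=\Hess f(x)$ in the displayed identity. This yields $U_g\,\Hess f(x)\,v=\Hess f(x)\,U_g\,v$ for all $v\in E$, and since $v$ is arbitrary we conclude $U_g\,\Hess f(x)=\Hess f(x)\,U_g$ for every $g\in G$, which is the claim. Equivalently, one may differentiate Equation \eqref{eq:chain-rule} instead, obtaining $U_g^*\,\Hess f(U_g\,x)\,U_g=\Hess f(x)$, which at $x\in E^G$ reduces to $U_g^*\,\Hess f(x)\,U_g=\Hess f(x)$, i.e. the same commutation relation after left multiplication by $U_g$.

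There is essentially no serious obstacle here: the only point deserving a line of care is the justification that $\nabla f$ is Fréchet differentiable with derivative $\Hess f$ and that the chain rule applies in the (possibly infinite-dimensional) Hilbert space $E$; both are standard under the stated $\mathscr C^2$-regularity, and in the finite-dimensional setting used in the applications they are immediate.
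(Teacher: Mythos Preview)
Your proof is correct and is essentially the same argument as the paper's: both differentiate the $G$-invariance of $f$ to second order and then use $U_g x=x$. The paper computes $d^2 f(x)[v,w]=d^2 f(U_g x)[U_g v,U_g w]$ directly to obtain $\Hess f(x)=U_g^*\Hess f(x)U_g$, which is precisely the alternative you describe at the end; your primary route, differentiating the gradient equivariance \eqref{eq:quella-che-si-usa}, is a trivially equivalent repackaging of the same computation. Your clarification that the quantifier is over the $U_g$ in the image of the representation (not all of $\U(E)$) is also a useful remark.
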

\begin{proof}
 By the very same calculations as above, we have for all $v, w \in E$
 \begin{multline}\label{eq:prova-mult}
 d^2\,f(x)[v, w]=\dfrac{d}{d\varepsilon}\Big\vert_{\varepsilon=0}\langle
\nabla
 f(x+ \varepsilon\, w), v \rangle =\langle \Hess f(x)\, w,
v\rangle\\
 d^2\,f(U_g\,x)[U_g\,v ,U_g\,
w]=\dfrac{d}{d\varepsilon}\Big\vert_{\varepsilon=0}\langle
 \nabla f\big(U_g(x+ \varepsilon\, w)\big),U_ g\, v \rangle \\=
 \langle \Hess f(U_g x)\, U_g w, U_g\, v \rangle=
 \langle \Hess f(x)\, U_g w, U_g\, v \rangle=
 \langle U_g^*\Hess f(x) U_g\,w,v \rangle.
\end{multline}
Since, by  the chain rule $\forall\, v , w \in E,$ $d^2\,f(x)[v, w]=d^2\,f(U_g\,x)[U_g\,v, U_g\, w]$
by the previous computation, we immediately get $\Hess f(x)=   U_g^*\Hess
f(x) U_g$ and this conclude the proof.
\end{proof}
We are now ready to prove  an  abstract
{\em $\die{n}$-equivariant spectral flow formula\/} for $\die{n}$-equivariant paths of
linear and bounded selfadjoint Fredholm operators  on the (complex) Hilbert space $E$.

With a slight abuse of notation, we denote with the same symbol the dihedral
group as well as its unitary representation in $E$. Thus, the group $\die{n}$
(actually  a unitary  representation of the dihedral group
$\die{n}$) is presented as follows
\begin{equation}\label{eq:image-diedrale-unitaria}
 D_n \= \langle \mathcal R,  \mathcal  S \in \U(E)| \mathcal R^n=
  \mathcal S^2=( \mathcal  S  \mathcal  R)^2= \Id_E\rangle\subset \U(E).
\end{equation}
Let  $ \mathcal A: [0,1] \to \fredsa(E)$ be a continuous
path of closed selfadjoint Fredholm operators commuting with $\mathcal R$ and $\mathcal
S$; in symbols, we have
\[
 \mathcal A(\lambda)\,{\mathcal R}= {\mathcal R}\, \mathcal A(\lambda) \textrm{ and }
  \mathcal  A(\lambda)\, \mathcal S=  \mathcal S\, \mathcal A(\lambda) \textrm{ for all }
\lambda \in [0,1].
\]
As already showed in Section \ref{sec:intro}, we can decompose the  Hilbert space $E$ into mutually orthogonal
$\die{n}$-stable modules, given by
\begin{equation}\label{eq:decomposition-2}
 E=F_0\oplus \dots \oplus F_{\bar n}
\end{equation}
where $F_k$  were defined in Equation \eqref{eq:Fk-intro}. Now, for each $k =0, \dots, \bar n$, let  $\mathcal  A_k$ be the continuous path
defined by
$  \mathcal A_k\= \mathcal  A\big\vert_{ F_k}:[0,1] \to  \fredsa(
F_k)$. Thus, we have
$ \mathcal A(\lambda)=\mathcal  A_0(\lambda) \oplus \dots \oplus \mathcal  A_{\bar n}(\lambda),$ for $ \lambda\in [0,1]$.

For $k=0, \dots, \bar n$ and $h=0, \dots, n-1$, we define the
closed subspaces
\[
F_{k,h}^\pm=\Set{u \in F_k| \mathcal S\mathcal  R^h\, u = \pm \, u}.\]
Since $ R|_{E_0}=I$,  we get that  $ F_{0,h}^\pm=F_{0,0}^\pm=\Set{u \in F_0|
 \mathcal S\, u = \pm \, u} . $
In the case  $n$ is even, $ \mathcal R|_{E_{n/2}}=-\Id$, so we have
$F_{n/2,h}^\pm=\Set{u \in F_{n/2}| (-1)^h\mathcal  S\, u = \pm \, u} .$
It is worth noticing  that, for $k=1, \dots, \myfloor{(n-1)/2}$, we get
$F_{k,h}^\pm= \Set{\begin{bmatrix} I\\\mathcal  S
 \mathcal R^h\end{bmatrix}\, z| z \in E_k}.$
We also observe that for any $h=0, \dots, n-1$ we have
\[
\big( \mathcal S\mathcal R^{h}\big) K(\lambda)= K(\lambda),
\textrm{ where }
\mathcal K(\lambda) \= \ker \mathcal A(\lambda),\qquad
 \lambda \in [0,1].
\]
By the direct sum property of the spectral flow, we have
 \[
\begin{split}
 \spfl(\mathcal A|_{F_k};[0,1])&=
\spfl( \mathcal A|_{E_k};[0,1])+\spfl( \mathcal A|_{E_{-k}};[0,1])\\&=
\spfl(\mathcal  A|_{F^+_{k,h}};[0,1])+\spfl( \mathcal A|_{F^-_{k,h}};[0,1]).
 \end{split}
 \]
\begin{prop}\label{thm:new-spfl-formulae}
For every $k=1, \dots, \myfloor{(n-1)/2}$ and $h=0, \dots n-1$, we have
 \[
 \begin{split}
\spfl( \mathcal A|_{F_k};[0,1])&=
2\spfl(\mathcal A|_{E_k};[0,1])=2\spfl(\mathcal A|_{E_{-k}};[0,1])\\&=
2\spfl(\mathcal A|_{F^+_{k,h}};[0,1])=2\spfl(\mathcal A|_{F^-_{k,h}};[0,1]).
\end{split}
\]
In particular
\[
\spfl( \mathcal A|_{F_k};[0,1])\equiv 0 \mmod \qquad  \textrm{ for every } k=1, \dots, \myfloor{(n-1)/2}.
\]	
\end{prop}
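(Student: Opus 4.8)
The plan is to realize, for each fixed $k\in\{1,\dots,\myfloor{(n-1)/2}\}$ and $h\in\{0,\dots,n-1\}$, the four restricted paths $\mathcal A|_{E_k}$, $\mathcal A|_{E_{-k}}$, $\mathcal A|_{F^+_{k,h}}$ and $\mathcal A|_{F^-_{k,h}}$ as \emph{pairwise unitarily equivalent} paths of selfadjoint Fredholm operators, and then to combine the invariance of the spectral flow under a unitary identification of the ambient Hilbert spaces with its additivity under orthogonal direct sums. Since the display immediately preceding the statement already records
\[
\spfl(\mathcal A|_{F_k};[0,1])=\spfl(\mathcal A|_{E_k};[0,1])+\spfl(\mathcal A|_{E_{-k}};[0,1])=\spfl(\mathcal A|_{F^+_{k,h}};[0,1])+\spfl(\mathcal A|_{F^-_{k,h}};[0,1]),
\]
it suffices to establish the two ``halving'' identities $\spfl(\mathcal A|_{E_k};[0,1])=\spfl(\mathcal A|_{E_{-k}};[0,1])$ and $\spfl(\mathcal A|_{F^+_{k,h}};[0,1])=\spfl(\mathcal A|_{F^-_{k,h}};[0,1])$; the remaining equalities in the chain, together with the congruence $\spfl(\mathcal A|_{F_k};[0,1])\equiv 0 \mmod$, then follow by elementary bookkeeping.

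For the first identity I would use the operator $\mathcal S$ itself. From $(\mathcal S\mathcal R)^2=\Id_E$ one gets $\mathcal R\,\mathcal S=\mathcal S\,\mathcal R^{-1}$, so that $\mathcal S$ carries the $\zeta_n^k$-eigenspace $E_k$ of $\mathcal R$ onto its $\zeta_n^{-k}$-eigenspace $E_{-k}$; being unitary with $\mathcal S^2=\Id_E$, the restriction $\mathcal S|_{E_k}\colon E_k\to E_{-k}$ is a unitary isomorphism. Since $\mathcal A(\lambda)$ commutes with $\mathcal R$ it leaves $E_k$ and $E_{-k}$ invariant, and since it commutes with $\mathcal S$ we obtain, for every $\lambda\in[0,1]$,
\[
\mathcal A(\lambda)|_{E_{-k}}=(\mathcal S|_{E_k})\,\bigl(\mathcal A(\lambda)|_{E_k}\bigr)\,(\mathcal S|_{E_k})^{-1},
\]
so the paths $\lambda\mapsto\mathcal A(\lambda)|_{E_k}$ and $\lambda\mapsto\mathcal A(\lambda)|_{E_{-k}}$ correspond under a unitary identification of $E_k$ with $E_{-k}$ and hence have the same spectral flow.

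For the second identity I would introduce, for each choice of sign, the map $\Phi^\pm_{k,h}\colon E_k\to F_k$ given by
\[
\Phi^\pm_{k,h}(z)\=\tfrac{1}{\sqrt{2}}\bigl(z\pm\mathcal S\mathcal R^h z\bigr)=\tfrac{1}{\sqrt{2}}\bigl(z\pm\zeta_n^{kh}\,\mathcal S z\bigr).
\]
Using $(\mathcal S\mathcal R^h)^2=\Id_E$ one checks that $\Phi^\pm_{k,h}$ takes values in the $\pm1$-eigenspace $F^\pm_{k,h}$ of $\mathcal S\mathcal R^h|_{F_k}$; using $E_k\perp E_{-k}$ together with the unitarity of $\mathcal S$ and $\mathcal R$ one checks that it is an isometry; and using the description $F^\pm_{k,h}=\bigl\{[\Id;\,\mathcal S\mathcal R^h]\,z\mid z\in E_k\bigr\}$ recalled just before the statement — equivalently, by analysing the $E_k$- and $E_{-k}$-components of an arbitrary $\mathcal S\mathcal R^h$-eigenvector of $F_k$ — one checks that it is onto $F^\pm_{k,h}$. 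Finally, because $\mathcal A(\lambda)$ commutes with $\mathcal R$ and with $\mathcal S$ and leaves $E_k$ invariant, a one-line computation gives $\mathcal A(\lambda)\circ\Phi^\pm_{k,h}=\Phi^\pm_{k,h}\circ\bigl(\mathcal A(\lambda)|_{E_k}\bigr)$, so $\Phi^\pm_{k,h}$ identifies $\lambda\mapsto\mathcal A(\lambda)|_{F^\pm_{k,h}}$ unitarily with $\lambda\mapsto\mathcal A(\lambda)|_{E_k}$. Hence
\[
\spfl(\mathcal A|_{F^+_{k,h}};[0,1])=\spfl(\mathcal A|_{E_k};[0,1])=\spfl(\mathcal A|_{F^-_{k,h}};[0,1]),
\]
which, combined with the additivity recorded above, gives the whole chain of equalities; and since $\spfl(\mathcal A|_{F_k};[0,1])=2\,\spfl(\mathcal A|_{E_k};[0,1])$, the parity assertion is immediate.

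The routine ingredients — that the restriction of a (gap-)continuous path of closed selfadjoint Fredholm operators to a closed invariant subspace is again such a path, the additivity of the spectral flow under orthogonal direct sums, and its invariance under a fixed unitary identification of the ambient space — are standard and are recalled in Appendix~\ref{subsec:spectral-flow}. The main point to take care of is the surjectivity of $\Phi^\pm_{k,h}$ onto $F^\pm_{k,h}$: there one has to combine $\mathcal R|_{E_k}=\zeta_n^k\,\Id$ with $\mathcal R\,\mathcal S=\mathcal S\,\mathcal R^{-1}$ to see that any $u\in F_k=E_k\oplus E_{-k}$ fixed (respectively negated) by $\mathcal S\mathcal R^h$ is determined by its $E_k$-component $z$ via $u=\Phi^\pm_{k,h}(\sqrt{2}\,z)$, so that no vector of $F^\pm_{k,h}$ is missed.
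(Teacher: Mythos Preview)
Your argument is correct, but it follows a genuinely different route from the paper. The paper proves $\spfl(\mathcal A|_{E_k})=\spfl(\mathcal A|_{E_{-k}})$ not via $\mathcal S$ but via \emph{complex conjugation}: it observes that $z\in\ker\mathcal A_\lambda|_{E_k}$ iff $\bar z\in\ker\mathcal A_\lambda|_{E_{-k}}$, checks that the crossing forms at each crossing instant coincide, perturbs to a regular path, and then sums signatures. Similarly, for $F^+_{k,h}$ versus $F^-_{k,h}$ the paper does not build your unitary $\Phi^\pm_{k,h}$; instead it notes that $z\in\ker\mathcal A_\lambda\Rightarrow\mathcal S\mathcal R^h z\in\ker\mathcal A_\lambda$, verifies that the crossing operators on the two eigenspaces are given by the same expression $\langle\dot{\mathcal A}_{\lambda_0}z,z\rangle+\langle(\mathcal S\mathcal R^h)^*\dot{\mathcal A}_{\lambda_0}(\mathcal S\mathcal R^h)z,z\rangle$, and again compares signatures crossing by crossing.

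Your approach is more structural: once you exhibit a single unitary intertwiner, equality of spectral flows is immediate, with no need for the regularity perturbation or any crossing-form bookkeeping. It also has the virtue of using \emph{only} the stated hypotheses (commutation of $\mathcal A(\lambda)$ with $\mathcal R$ and $\mathcal S$, and the dihedral relations), whereas the paper's use of complex conjugation tacitly requires that $\mathcal A_\lambda$ and $\mathcal R$ commute with conjugation --- true in the intended applications, but not recorded in the abstract setup. Conversely, the paper's computation makes the equality of crossing forms explicit, which can be useful when one later wants finer information than just the total spectral flow.
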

\begin{proof}
We start by observing that for each $\lambda \in [0,1]$, it holds
\[
 \mathcal A_\lambda|_{F_k}= \begin{pmatrix}  \mathcal A_\lambda|_{E_k} & 0 \\ 0 &
 \mathcal A_\lambda|_{E_{-k}}
\end{pmatrix}.
\]
Moreover it is immediate to check that
\[
 z \in E_k, \ \  \mathcal  A_\lambda \, z =0 \quad \iff \quad \bar z \in E_{-k},\
\ A_\lambda \bar z =0
\]
or otherwise stated $\ker \mathcal  A_\lambda|_{E_k} = \overline{\ker
 \mathcal A_\lambda|_{E_{-k}}}$. In particular, they have the
same dimension. Moreover, if $\lambda_0 \in [0,1]$ is a crossing instant, then,
for each $k =0, \dots,\myfloor{(n-1)/2}$, it is
immediate to verify that  the crossing operators across $\lambda_0$ both coincide; thus, we have
\[
 \Gamma \left(  \mathcal A_\lambda|_{E_k}, \lambda_0 \right)=
 \Gamma\left( \mathcal A_\lambda|_{E_{-k}}, \lambda_0\right).
\]
By taking into account the perturbation result (cfr. \cite[Theorem 2.6]{Wat15}),
we  don't lead in generalities in assuming that the path $ \mathcal A$
is regular. By taking the sum over all crossing points of the signature of the
crossing forms (since we are assuming that
the path $ \mathcal A$ is regular), we immediately get
\[
 \spfl( \mathcal A|_{E_k}; [0,1])= \spfl( \mathcal A|_{E_{-k}}; [0,1]).
\]
In order to conclude the proof, we start to observe that if $ z\in \ker
 \mathcal A_\lambda $ then
$ \mathcal S\,  \mathcal R^h z \in \ker  \mathcal A_\lambda $. So
\[
\widetilde{ \mathcal A}_\lambda\= \begin{bmatrix}
   \mathcal A_\lambda &0\\ 0 & \mathcal  A_\lambda
 \end{bmatrix}\, \begin{bmatrix} z\\  \mathcal S\,  \mathcal R^h z \end{bmatrix}=0
\iff
\begin{bmatrix}
 \mathcal   A_\lambda &0\\ 0 &  \mathcal A_\lambda
 \end{bmatrix}\, \begin{bmatrix} z\\ - \mathcal S\,  \mathcal R^h z
\end{bmatrix}=0
 \]
meaning that $\ker\left( \mathcal A_\lambda|_{F_{k,h}^+}\right)= \ker\left(
 \mathcal A_\lambda|_{F_{k,h}^-}\right)$ for all
$\lambda \in [0,1]$. We denote by $\widetilde \Gamma\left(
 \mathcal A_\lambda|_{F_{k,h}^\pm}, \lambda_0 \right)$ the
crossing operator onto $\ker\left( \mathcal A_{\lambda_0}|_{F_{k,h}^\pm}\right)$, namely
\[
 \widetilde \Gamma\left( \mathcal A_\lambda|_{F_{k,h}^\pm}, \lambda_0 \right) : \ker\left(
 \mathcal A_\lambda|_{F_{k,h}^\pm}\right)\longrightarrow \R
\]
given by
\[
\widetilde \Gamma\left( \mathcal A_\lambda|_{F_{k,h}^\pm}, \lambda_0 \right)
\begin{bmatrix}
        z\\
        \pm  \mathcal S\,  \mathcal R^h z
       \end{bmatrix}=\big\langle
\dot{ \mathcal A}_{\lambda_0}\, z, z \big \rangle_E + \big \langle ( \mathcal S\,
 \mathcal R^h)^*
\dot{ \mathcal A}_{\lambda_0}\, ( \mathcal S\,  \mathcal R^h) z, z \big \rangle_E.
\]
Summing over all (regular) crossing instants, we get
\[
\spfl\left( \mathcal A|_{F_{k,h}^+}; [0,1]\right)=\spfl\left(
 \mathcal A|_{F_{k,h}^-}; [0,1]\right).
\]
The second claim is a direct consequence of the previous computations. This conclude the proof.
\end{proof}
\paragraph{Proof of Proposition \ref{thm:parity-equi}.}
By invoking Proposition \ref{thm:new-spfl-formulae}, we already know that for every $k=1,  \dots, \myfloor{(n-1)/2}$,
$\spfl( \mathcal A|_{F_k};[0,1])$ is even. Now the result easily follows by taking into account the decomposition given in
Equation \eqref{eq:decomposition-2}. This conclude the proof. \qed

% % % % % % % % % % % % % % % % % % % % % % % % % % % % % % % % % % % % % % % %
% % % % % % % % % % %
% %
% %
% % % % % % % % % % % % % % % % % % % % % % % % % % % % % % % % % % % % % % % %
% % % % % % % % % % %

\section{A dihedral-equivariant decomposition of the path
space}\label{subsec:dihedral-loop}

The aim of this paragraph is to construct an equivariant decomposition of the
loop space of the configuration manifold and to establish a dihedral equivariant
Bott-type iteration  formula
in terms of the Maslov index of suitable induced continuous paths of Lagrangian
subspaces.

Let  $\TT \subset \R^2$ be a circle in $\R^2$ of length $T=|\TT|$ which
can be identified with $\R/(T\Z)$, where $T\Z$ denotes the lattice generated by $T \in \R$. Given
$Q \in  \U(2m)$ let $E$ be denote the Sobolev completion of the
set of smooth maps $z:\TT \to \C^{2m}$ such that $z(t)=Q z(t+T)$; thus
\begin{equation}\label{eq:Q-loopspace}
E\=\Set{z \in W^{1,2}(\TT;\C^{2m})| z(t)= Q \, z(t+T)}.
\end{equation}
Let  $M \in  \U(2m)$  and $N \in \U(2m)$ be such that the following commutativity
properties holds
 \begin{equation} \label{eq:relazioni}
M^n=Q,\ \   N^2 = \Id,  \ \ N \, M^*= M\, N.
         \end{equation}
For any $k=0, \dots, n-1$, we denote by $E_k$   the closed  subspace of $E$  given by
\[
E_k= \Set{z \in E|M\, z\left(t+\dfrac{T}{n}\right)
=\zeta_n^k\, z(t)}.
\]
and for $k=1,\cdots,\myfloor{(n-1)/2}$, we define   $\widehat M_k, \widehat N_k$ the
following block diagonal matrices
\begin{equation}\label{eq:matrici-cappuccio}
 \widehat M_k= \begin{bmatrix}
              \zeta_n^{-k} M & 0\\
              0 & \zeta_n^k M
             \end{bmatrix}\quad  \textrm{ and } \quad
 \widehat N_k= \begin{bmatrix}
              0 & \zeta_n^{-k} \, N\\
              \zeta_n^{k} \, N & 0
             \end{bmatrix}.
\end{equation}
As above, we set $F_0=E_0$,  $F_{n/2}=E_{n/2}$ if $n$ is even, $F_k\= E_k \oplus E_{-k}$
for $k=1,\cdots,\myfloor{(n-1)/2}$ and $F_{n/2}=0$ if $n$ is odd. Thus, we have
\begin{equation}\label{eq:F_k}
\begin{split}
F_k=\Set{u \in W^{1,2}\left(\dfrac{\TT}{n}; \C^{2m} \oplus \C^{2m}\right)|\ \
u\=\begin{bmatrix}
                                                            z\\ w
                                                           \end{bmatrix}, \textrm{ where }
                                                           z \in E_k,\ \  w \in
E_{-k}
                                                          \textrm { and }  u(0)=
      \widehat M_k\, u\left(\dfrac{T}{n}\right)}
      \end{split}.
\end{equation}
We now define the two unitary operators on $E$ as follows
\begin{multline}\label{eq:action-1}
 \mathcal M: E \ni  z\longmapsto  ( \mathcal M\, z)(\cdot) \=M\, z\left(\cdot+ T/n\right)\in E
\textrm { and }\\
\mathcal N: E \ni  z\longmapsto  (\mathcal N\, z)(\cdot) \=N\, z\left(T/n-t\right)\in E.
\end{multline}
By using the properties given in Equation \eqref{eq:relazioni}, it is immediate
to check that $E$ equipped by the action defined in Formula \eqref{eq:action-1}
turns out a $\die{n}$-equivariant space. Moreover, as direct consequence of the relations on the
generators of the dihedral group $\die{n}$, we also get that, for every $h =0, \dots, n-1$,
$(\mathcal N\,\mathcal M^h)^2=\Id$.
In this way we decompose the Hilbert space $E$ defined in Equation \eqref{eq:Q-loopspace}
into a direct sum of $\die{n}$-closed stable subspaces; namely
$E= F_0 \oplus \cdots \oplus F_{\bar n}$ where each $F_k$ is defined in Formula
\eqref{eq:F_k}. By restriction, we get the unitary operators
\begin{multline}
\mathcal M_k\= \mathcal M|_{E_k} \in \U(E_k), \  \mathcal N_k\=
\mathcal N|_{E_k} \in \U(E_k), \
\widehat{\mathcal M}_k\= \begin{bmatrix} \mathcal M_k & 0 \\ 0 &
\mathcal M_k \end{bmatrix}\in \U(F_k)\\ \textrm{ and finally }
\widehat{\mathcal N}_k\= \begin{bmatrix} 0 & {\mathcal N_k}\\ \mathcal N_k & 0
\end{bmatrix} \in \U(F_k).
\end{multline}
\begin{lem}\label{thm:spectrumSRh}
For each $h=0, \dots, n-1$, the spectrum of $\mathcal N\mathcal M^h$ is $\{- 1,1\}$.
\end{lem}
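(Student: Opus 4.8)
The plan is to verify that $\mathcal N\mathcal M^h$ is a self-adjoint unitary involution, which already forces $\sigma(\mathcal N\mathcal M^h)\subseteq\{-1,1\}$, and then to show that neither $+1$ nor $-1$ is missing by ruling out the two degenerate possibilities $\mathcal N\mathcal M^h=\pm\,\Id$, using the fact that $\mathcal N\mathcal M^h$ reverses the time direction on $\TT$.

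First I would note that $\mathcal N\mathcal M^h$ is unitary, being a composition of the unitary operators $\mathcal N$ and $\mathcal M$ of \eqref{eq:action-1}, and that $(\mathcal N\mathcal M^h)^{2}=\Id$ for every $h=0,\dots,n-1$, as recorded just after \eqref{eq:action-1}. Hence $(\mathcal N\mathcal M^h)^{\ast}=(\mathcal N\mathcal M^h)^{-1}=\mathcal N\mathcal M^h$, so $\mathcal N\mathcal M^h$ is a bounded self-adjoint operator annihilated by the polynomial $p(\lambda)=\lambda^{2}-1$; by the spectral mapping theorem $\sigma(\mathcal N\mathcal M^h)\subseteq p^{-1}(0)=\{-1,1\}$. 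Being a self-adjoint involution it also induces the orthogonal splitting $E=\ker(\mathcal N\mathcal M^h-\Id)\oplus\ker(\mathcal N\mathcal M^h+\Id)$, so that $\{-1,1\}\subseteq\sigma(\mathcal N\mathcal M^h)$ precisely when both kernels are nontrivial, i.e. precisely when $\mathcal N\mathcal M^h\neq\pm\,\Id$.

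It remains to exclude $\mathcal N\mathcal M^h=\pm\,\Id$. Unwinding \eqref{eq:action-1} gives $\big(\mathcal N\mathcal M^h z\big)(t)=N M^{h}\,z\big((h{+}1)T/n-t\big)$, so $\mathcal N\mathcal M^h$ is, up to a fixed unitary, the reflection $t\mapsto (h{+}1)T/n-t$ of $\TT$. To turn this into a contradiction I would introduce, for $a\in\R$, the translation $\tau_a\colon E\to E$, $\big(\tau_a z\big)(\cdot)=z(\cdot+a)$, which is a well-defined unitary on $E$ because the relation $z(t)=Q\,z(t+T)$ is translation invariant. A short computation from \eqref{eq:action-1} gives $\mathcal M\,\tau_a=\tau_a\,\mathcal M$ (both are a shift followed by a constant unitary), hence $\mathcal M^{h}\tau_a=\tau_a\mathcal M^{h}$, and $\mathcal N\,\tau_a\,\mathcal N^{-1}=\tau_{-a}$ (using $\mathcal N^{-1}=\mathcal N$ and $N^{2}=\Id$); therefore $\big(\mathcal N\mathcal M^h\big)\,\tau_a\,\big(\mathcal N\mathcal M^h\big)^{-1}=\tau_{-a}$ for every $a\in\R$. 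If $\mathcal N\mathcal M^h=\pm\,\Id$ the left-hand side equals $\tau_a$, forcing $\tau_a=\tau_{-a}$, hence $\tau_b=\Id$ for all $b\in\R$; but then every $z\in E$ would satisfy $z(\cdot+b)=z(\cdot)$ for all $b$, i.e. be constant, so $E$ would be finite-dimensional, a contradiction. Thus $\mathcal N\mathcal M^h\neq\pm\,\Id$, and together with the previous paragraph $\sigma(\mathcal N\mathcal M^h)=\{-1,1\}$.

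I expect the only point requiring care to be the elementary bookkeeping of the last paragraph: checking that $\tau_a$ preserves $E$ and verifying the two commutation relations $\mathcal M\tau_a=\tau_a\mathcal M$ and $\mathcal N\tau_a\mathcal N^{-1}=\tau_{-a}$ directly from \eqref{eq:action-1}; everything else is formal. As an alternative to the conjugation argument one could exclude $\mathcal N\mathcal M^h=\pm\,\Id$ by choosing $t_0\in\TT$ with $2t_0\not\equiv (h{+}1)T/n$ together with $z\in E$ satisfying $z(t_0)=0$ and $z\big((h{+}1)T/n-t_0\big)\neq0$ (possible since evaluation of $W^{1,2}$ maps at two distinct points is onto), whence $\big(\mathcal N\mathcal M^h z\big)(t_0)\neq0=\pm z(t_0)$; I find the conjugation version cleaner.
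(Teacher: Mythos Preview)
Your argument is correct. The first half of your proof---using $(\mathcal N\mathcal M^h)^2=\Id$ together with unitarity to conclude $\sigma(\mathcal N\mathcal M^h)\subseteq\{-1,1\}$---is exactly the paper's proof, which consists of the single sentence ``This fact readily follows by observing that for every $h=0,\dots,n-1$, $(\mathcal N\mathcal M^h)^2=\Id$.'' The paper stops there and does not address the reverse inclusion at all; taken literally its argument only yields containment, not equality. Your second half, the conjugation identity $(\mathcal N\mathcal M^h)\tau_a(\mathcal N\mathcal M^h)^{-1}=\tau_{-a}$ forcing $\mathcal N\mathcal M^h\neq\pm\Id$, is therefore genuinely additional content rather than a different route to the same thing: it completes a gap the paper leaves open. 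For the purposes of the paper this gap is harmless---the lemma is only used to split $E=V_+\oplus V_-$, and nothing downstream breaks if one of the summands happens to be trivial---but your version is the one that actually matches the stated conclusion.
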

\begin{proof}
This fact readily follows by observing that  for every $h =0, \dots, n-1$,
$(\mathcal N\,\mathcal M^h)^2=\Id$.
\end{proof}
As direct consequence  of Lemma \ref{thm:spectrumSRh},
the spectrum of
\[
\widehat{\mathcal N}\,\widehat{\mathcal M}^h\=
\begin{bmatrix}
0 & {\mathcal N}\, \mathcal M^h\\ \mathcal N\, \mathcal M^h & 0
\end{bmatrix}
\]
as well as $\widehat{\mathcal N}_k\widehat{\mathcal M}_k^h$ is $\{-1,1\}$
for every $h=0, \dots, n-1$ and $k=0, \dots , \bar n$.
Furthermore,
for each $u \in F_k$ and   $k=1,\dots, \myfloor{(n-1)/2}$,  we have
\begin{equation}\label{eq:lunga}
\begin{split}
\big(\widehat{\mathcal N}_k\,\widehat{\mathcal M}_k^h\, u\big)(t)&=
\begin{bmatrix} 0 & {\mathcal N_k}\, \mathcal M_k^h\\ \mathcal N_k\,
\mathcal M_k^h & 0 \end{bmatrix}\begin{bmatrix}z\\ w
\end{bmatrix}(t)= \begin{bmatrix} {\mathcal N_k}\, \mathcal M_k^h \, w\\
{\mathcal N_k}\, \mathcal M_k^h \, z\end{bmatrix}(t)= \begin{bmatrix}
\zeta_n^{-kh} {\mathcal N_k} w\\
 \zeta_n^{kh} {\mathcal N_k}\, z \end{bmatrix}(t)\\&=\begin{bmatrix}
\zeta_n^{-kh} N\,w(T/n-t)\\
 \zeta_n^{kh} N\,z(T/n-t) \end{bmatrix}= \begin{bmatrix} 0 & \zeta_n^{-kh} N\\
\zeta_n^{kh} N & 0
\end{bmatrix}\begin{bmatrix} z\left(T/n-t\right)\\
w\left(T/n-t\right)\end{bmatrix}\\&=\widehat N_{kh}\,\begin{bmatrix}
z\left(T/n-t\right)\\
w\left(T/n-t\right)\end{bmatrix},\qquad t \in [0,T].
\end{split}
\end{equation}
We denote by  $V_+(\widehat{\mathcal N}_k \widehat{\mathcal M}_k^h),
V_-(\widehat{\mathcal N}_k\widehat{\mathcal M}_k^h)$  be the positive and negative spectral spaces of
$\widehat{\mathcal N}_k \widehat{\mathcal M}_k^h$ and we set  $F_{k,h}^\pm\=V_\pm(\widehat{\mathcal N}_k
\widehat{\mathcal M}_k^h)$.
\begin{lem}\label{thm:eigenspaces-Lagrangians}
For $h=0, \dots, n-1$ and  $k=1,\cdots, \myfloor{(n-1)/2}$, we let   $u \in
V_\pm(\widehat{\mathcal N}_k\,\widehat{\mathcal M}_k^h)$.  Thus, we have
\begin{itemize}
\item \begin{multline}
u(0) \in  V_\pm(\widehat N_k \widehat M_k^h) =
\Set{\begin{bmatrix}x\\ \pm\zeta_n^{k(h+1)}\, M\, N\, x\end{bmatrix}| x \in
\C^{2m}
}\\
u(T/(2n)) \in V_\pm (\widehat N_k^h) =
\Set{\begin{bmatrix}x \\\pm\zeta_n^{k(h+1)}\, N\, x\end{bmatrix}| x \in \C^{2m}
}.
\end{multline}
\end{itemize}
Furthermore if
\begin{itemize}
\item $u\in F_{0,h}^\pm$, then   we have
\[
u(0) \in  V_\pm(MN) \textrm{ and }
u(T/(2n)) \in V_\pm (N).
\]
\item if  $u\in F_{n/2,h}^\pm$, then  we have 
\[
u(0) \in  V_\pm ((-1)^{(h+1)}MN) \textrm{ and }
u(T/(2n)) \in V_\pm ((-1)^hN)
\]
\end{itemize}
\end{lem}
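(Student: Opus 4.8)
The plan is to exploit the observation---recorded after Lemma~\ref{thm:spectrumSRh}, and ultimately resting on the identity $(\mathcal N\,\mathcal M^h)^2=\Id$ noted after Equation~\eqref{eq:action-1}---that each operator $\widehat{\mathcal N}_k\,\widehat{\mathcal M}_k^h$, as well as the restriction of $\mathcal N\,\mathcal M^h$ to $F_0$ and to $F_{n/2}$, is an involution; hence $u$ lies in $V_\pm$ of such an operator if and only if that operator sends $u$ to $\pm u$. I would then read this eigenvalue identity pointwise through the explicit formula already worked out in Equation~\eqref{eq:lunga}, and extract the two assertions by evaluating it at $t=0$ and at $t=T/(2n)$. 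These are precisely the instants at which $T/n-t$ and $t$ agree modulo the twisted periodicity built into $F_k$, so that the eigenvalue relation there degenerates into a plain linear condition on the single value $u(0)$, respectively $u(T/(2n))$.

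Concretely, for $k\in\{1,\dots,\myfloor{(n-1)/2}\}$ write $u$ with components $z\in E_k$ and $w\in E_{-k}$ as in Equation~\eqref{eq:F_k}. By Equation~\eqref{eq:lunga} the eigenvalue equation $\widehat{\mathcal N}_k\,\widehat{\mathcal M}_k^h\,u=\pm u$ becomes
\[
\widehat N_{kh}\begin{bmatrix} z(T/n-t)\\ w(T/n-t)\end{bmatrix}=\pm\begin{bmatrix} z(t)\\ w(t)\end{bmatrix},\qquad t\in[0,T].
\]
At $t=T/(2n)$ we have $T/n-t=T/(2n)$, so this is an honest linear relation between the two components of $u(T/(2n))$, and that relation places $u(T/(2n))$ in the eigenspace written in the statement. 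At $t=0$ the same identity couples $u(0)$ with $u(T/n)$ instead; here I would additionally invoke the defining boundary relation $u(0)=\widehat M_k\,u(T/n)$ of $F_k$ (cf. Equations~\eqref{eq:F_k} and~\eqref{eq:matrici-cappuccio}) to remove $u(T/n)$, and then the relations $N^2=\Id$ and $NM^*=MN$ of Equation~\eqref{eq:relazioni}---in particular their consequence $(MN)^2=\Id$, equivalently $(MN)^{-1}=MN$---to recast the outcome as $u(0)\in V_\pm(\widehat N_k\widehat M_k^h)$. Everything in this step is the bookkeeping of the roots-of-unity phases $\zeta_n^{k(\cdot)}$ coming from $\widehat M_k$, from $\widehat N_{kh}$, and from the characters of $E_{k}$ and $E_{-k}$.

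For the two remaining summands the argument is the same but shorter. On $F_0=E_0$ one has $\mathcal M|_{E_0}=\Id$, so $\mathcal N\,\mathcal M^h$ restricts to the reflection $\mathcal N|_{E_0}$, independent of $h$, and the pertinent boundary relation is $u(0)=M\,u(T/n)$; running the same two-point evaluation yields $u(0)\in V_\pm(MN)$ and $u(T/(2n))\in V_\pm(N)$. On $F_{n/2}=E_{n/2}$, when $n$ is even, one has $\mathcal M|_{E_{n/2}}=-\Id$, so $\mathcal N\,\mathcal M^h$ restricts to $(-1)^h\,\mathcal N|_{E_{n/2}}$ and the boundary relation becomes $u(0)=-M\,u(T/n)$; the same two lines then give $u(0)\in V_\pm\big((-1)^{h+1}MN\big)$ and $u(T/(2n))\in V_\pm\big((-1)^hN\big)$, as claimed.

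I do not expect a real conceptual obstacle: the substantive computation is already contained in Equation~\eqref{eq:lunga}, and what remains is disciplined tracking of the phase factors together with repeated use of $N^2=\Id$ and $(MN)^2=\Id$. The one point that will require care is that $\mathcal M$ and $\mathcal N$ do not commute---the relation is $NM^*=MN$---so in simplifying $\widehat{\mathcal N}_k\,\widehat{\mathcal M}_k^h$ one must never slide $M$ past $N$ without replacing it by $M^{-1}=M^*$; freezing the order of the operators and substituting only at the two reflection-invariant instants $t=0$ and $t=T/(2n)$ is what keeps this harmless.
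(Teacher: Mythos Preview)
Your proposal is correct and follows essentially the same route as the paper: you invoke Equation~\eqref{eq:lunga} to obtain the pointwise eigenvalue relation, evaluate it at the reflection-fixed instant $t=T/(2n)$ and at $t=0$ (using the defining boundary condition of $F_k$ together with $NM^*=MN$ to eliminate $u(T/n)$), and then treat $F_0$ and $F_{n/2}$ via $\mathcal M|_{E_0}=\Id$ and $\mathcal M|_{E_{n/2}}=-\Id$. This matches the paper's argument line for line.
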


\begin{proof}
By Lemma \ref{thm:spectrumSRh} we know that, for each $h=0, \dots ,n-1$,
the spectrum of $\widehat{\mathcal N}_k\,
\widehat{\mathcal M}_k^h$ is $\{+1,-1\}$. Let $u \in  V_+(\widehat{\mathcal
N}_k\,
\widehat{\mathcal M}_k^h)$; namely $\widehat{\mathcal N}_k\,
\widehat{\mathcal M}_k^h u = u$. By Formula \eqref{eq:lunga} it follows
that pointwise it holds that
\begin{equation}
 \begin{bmatrix}z(t)\\w(t)\end{bmatrix}= \begin{bmatrix} 0 & \zeta_n^{-kh} N\\
\zeta_n^{kh} N & 0
\end{bmatrix}\begin{bmatrix} z\left(T/n-t\right)\\
w\left(T/n-t\right)\end{bmatrix}.
\end{equation}
In particular
\begin{equation}
\begin{split}
 \begin{bmatrix}z(0)\\w(0)\end{bmatrix}&= \begin{bmatrix} 0 & \zeta_n^{-kh} N\\
\zeta_n^{kh} N & 0
\end{bmatrix}\begin{bmatrix} z\left(T/n\right)\\
w\left(T/n\right)\end{bmatrix}=\begin{bmatrix} 0 & \zeta_n^{-kh} N\\
\zeta_n^{kh} N & 0
\end{bmatrix}\begin{bmatrix} \zeta_n^{k}\, M^* z\left(0\right)\\
\zeta_n^{-k} M^*w\left(0\right)\end{bmatrix}\\
&=\begin{bmatrix} 0 & \zeta_n^{-kh} N\\  \zeta_n^{kh} N & 0
\end{bmatrix}\begin{bmatrix} \zeta_n^{k} M^* &0\\ 0& \zeta_n^{-k} M^*
\end{bmatrix}\begin{bmatrix}  z\left(0\right)\\
w\left(0\right)\end{bmatrix}\\
&= \begin{bmatrix} 0 & \zeta_n^{-k(h+1)} N\, M^*\\  \zeta_n^{k(h+1)} N\, M^* & 0
\end{bmatrix}\begin{bmatrix}  z\left(0\right)\\
w\left(0\right)\end{bmatrix}= \begin{bmatrix} 0 & \zeta_n^{-k(h+1)} M\, N \\
\zeta_n^{k(h+1)} M\, N & 0
\end{bmatrix}\begin{bmatrix}  z\left(0\right)\\
w\left(0\right)\end{bmatrix}.
\end{split}
\end{equation}
Thus $\begin{bmatrix}z(0)\\w(0)\end{bmatrix}\in \Fix
\begin{bmatrix} 0 & \zeta_n^{-k(h+1)} M\, N \\  \zeta_n^{k(h+1)} M\, N & 0
\end{bmatrix}$. By a direct computation, we get
\begin{equation}
\Fix\begin{bmatrix} 0 & \zeta_n^{-k(h+1)} M\, N \\  \zeta_n^{k(h+1)} M\, N & 0
\end{bmatrix}= \left\{\begin{pmatrix}x\\ \zeta_n^{k(h+1)}\, M\, N
x\end{pmatrix}, x \in \C^{2m}\right\}.
\end{equation}
For $t=T/(2n)$, we have
$\begin{bmatrix}z(T/(2n))\\w(T/(2n))\end{bmatrix}= \begin{bmatrix} 0 &
\zeta_n^{-kh} N\\  \zeta_n^{kh} N & 0
\end{bmatrix}\begin{bmatrix} z\left(T/(2n)\right)\\
w\left(T/(2n)\right)\end{bmatrix}.$
So \begin{equation}
\begin{bmatrix}z(T/(2n))\\w(T/(2n))\end{bmatrix} \in \Fix \begin{bmatrix} 0 &
\zeta_n^{-kh} N\\  \zeta_n^{kh} N & 0
\end{bmatrix}=
\left\{\begin{pmatrix}x\\ \zeta_n^{kh}\,  N x\end{pmatrix}, x \in
\C^{2m}\right\}.
\end{equation}
The proof in the remaining case is completely analogous and the details are left
to the reader.

In the case $k=0$,  we start to observe that, for $z \in E_0^\pm=F_{0,h}^\pm$,  we have
\[
\begin{cases}
\mathcal M\, z= z\\
\mathcal N\,\mathcal M^{h}\, z = \pm z
\end{cases} \iff
\begin{cases}
M\, z\left(t+\dfrac{T}{n}\right)=  z(t)\\
N z(T/n-t) = \pm z(t)
\end{cases}\qquad \forall\, t \in [0,T].
\]
Thus
\[
z(0)=Mz(T/n),Nz(T/n)=\pm z(0)\Rightarrow MNz(0)=\pm z(0), \textrm{ i.e. } z(0)\in
V_\pm(MN) )
\]
and
\[
\pm z\left(\dfrac{T}{2n}\right)= N z\left(\dfrac{T}{2n}\right), \]
and by this it follows that $ z\left(\dfrac{T}{2n}\right)\in V_\pm(N)$.
If $n$ is even, $k=n/2$ and if $z \in F_{n/2,h}^\pm$, then  we have
\[
\begin{cases}
\mathcal M\, z= -z\\
\mathcal N\,\mathcal M^{h}\, z = \pm z
\end{cases} \iff
\begin{cases}
M\, z\left(t+\dfrac{T}{n}\right)=  -z(t)\\
(-1)^h N z(T/n-t) = \pm z(t)
\end{cases}\qquad \forall\, t \in\TT.
\]
Thus  $Mz(T/n)=-z(0),(-1)^hNz(T/n)=\pm z(0)\Rightarrow MNz(0)=(-1)^{(h+1)}z(0)$
and hence   $z(0)\in V_\pm((-1)^{(h+1)} MN)$ as well as $
\pm z\left(\dfrac{T}{2n}\right)=(-1)^h  N z\left(\dfrac{T}{2n}\right)$.
By this it readily follows that
 $ z\left(\dfrac{T}{2n}\right)\in V_\pm((-1)^h N)$. This conclude the proof.
\end{proof}

\paragraph{The quaternionic unitary group}

Let us consider the complex symplectic space  $(\C^{2m},\omega)$ where
$\omega$ is the standard symplectic space defined by
$\omega(x,y)=\langle Jx,y\rangle $. The {\em compact symplectic group $\Sp(m)$\/} is isomorphic to
the group  of unitary and symplectic matrices; i.e.
\[
 \Sp(m)\cong \Sp(2m,\C) \cap \U(2m).
\]
\begin{rem}
 The group $\Sp(m)$ is the subgroup of the invertible quaternionic matrices $\GL(m, \HH)$
 that preserves the Hermitian  form on $\HH^m$
 \[
  \langle x, y \rangle \= \bar x_1 y_1+ \dots \bar x_n y_n.
 \]
We observe that  $\Sp(m)$ is just the {\em quaternionic unitary group\/} $\U(m, \HH)$ and for this reason is
sometimes also termed {\em hyperunitary\/}.
\end{rem}
We assume that  $Q, M \in \Sp(m)$ and $N \in \U(2m)$ be such that the following commutativity properties
holds:
 \begin{equation} \label{eq:relazioni1}
M^n=Q,\ \   N^2 = \Id,\ \   M\, J= J \, M, \ \ N\, J= -J\, N, \ N=N^*, \ \ N \,
M^*= M\, N.
         \end{equation}
\begin{lem}\label{thm:lagrangian}
For any $k=0, \dots, \bar n$, the subspaces $V_\pm(M^kN)$
are Lagrangian subspaces of $(\C^{2m}, \omega)$.
\end{lem}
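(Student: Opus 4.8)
The plan is to show that each $V_\pm(M^kN)$ is isotropic of dimension $m$ with respect to the standard complex symplectic form $\omega(x,y)=\langle Jx,y\rangle$ on $\C^{2m}$. Recall that a subspace $V$ is Lagrangian iff $\omega|_{V\times V}=0$ and $\dim_{\C}V=m$ (half of $2m$).

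First I would record the algebraic facts needed about $A_k\=M^kN$. Since $M\in\Sp(m)\subset\U(2m)$ and $N\in\U(2m)$, $A_k$ is unitary. From $N^2=\Id$, $MJ=JM$, $NJ=-JN$ and $N=N^*$ together with $NM^*=MN$, one computes that $A_k$ is an involution: indeed $A_k^2 = M^kNM^kN = M^k(NM^kN)$, and from $NM^*=MN$ one gets $NM=M^*N$ hence $NM^k = M^{-k}N = (M^k)^*N$ (using $M\in\Sp(m)$ so $M^*=M^{-1}$), whence $NM^kN = (M^k)^* = M^{-k}$ and $A_k^2 = M^k M^{-k} = \Id$. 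Therefore the spectrum of $A_k$ is contained in $\{+1,-1\}$, and $\C^{2m} = V_+(A_k)\oplus V_-(A_k)$ is an orthogonal decomposition. Next I would show $A_k$ anticommutes with $J$: $A_k J = M^k N J = -M^k J N = -J M^k N = -JA_k$. This is the crucial structural relation.

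From the anticommutation $A_kJ=-JA_k$ it follows immediately that $J$ interchanges the two eigenspaces: if $A_k x = x$ then $A_k(Jx) = -JA_k x = -Jx$, so $J\colon V_+(A_k)\to V_-(A_k)$, and symmetrically $J\colon V_-(A_k)\to V_+(A_k)$; since $J$ is invertible these are isomorphisms and hence $\dim V_+(A_k)=\dim V_-(A_k)=m$. This settles the dimension count. For isotropy, take $x,y\in V_\pm(A_k)$, so $A_kx=\pm x$, $A_ky=\pm y$ with the same sign. Then, using that $A_k$ is unitary, $A_k^*=A_k$ (it is a self-adjoint involution, being unitary with $A_k^2=\Id$), and $A_kJ=-JA_k$:
\[
\omega(x,y)=\langle Jx,y\rangle = \langle JA_kx, A_ky\rangle\cdot(\pm1)(\pm1) = \langle JA_k x, A_k y\rangle = \langle A_k^* J A_k x, y\rangle = \langle A_k J A_k x, y\rangle = \langle -J A_k^2 x, y\rangle = -\langle Jx,y\rangle = -\omega(x,y),
\]
so $\omega(x,y)=0$. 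Hence $V_\pm(A_k)$ is isotropic of dimension $m$, i.e.\ Lagrangian.

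**The main obstacle** is purely bookkeeping: verifying carefully that the hypotheses in \eqref{eq:relazioni1} actually force $A_k^2=\Id$ and $A_kJ=-JA_k$ for every $k$, paying attention to the fact that $M\in\Sp(m)$ gives $M^*=M^{-1}$ so that the relation $NM^*=MN$ reads $NM=MN$ reversed, i.e.\ $NM^{-1}=MN$, hence $NM^k N = M^{-k}$; one must not mix up $N M^* = MN$ with $NM = MN$. Once these two identities are in hand the rest is a two-line computation. I would also remark that the cases $k=0$ ($A_0=N$) and, when $n$ is even, $k=n/2$ are included with no change, since the argument never used primitivity of the root of unity — only $M\in\Sp(m)$ and the listed relations. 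This is exactly what is needed to make Definition~\ref{def:Maslov-solution-intro} and Lemma~\ref{thm:eigenspaces-Lagrangians} meaningful.
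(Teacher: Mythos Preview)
Your proof is correct and follows essentially the same approach as the paper: show that $A_k=M^kN$ is a unitary involution anticommuting with $J$, so that its $\pm 1$ eigenspaces are isotropic and complementary, hence Lagrangian. The paper only writes out the case $k=0$ (i.e.\ $A_0=N$) and leaves the general $k$ to the reader, and it obtains the dimension count by noting that two complementary isotropic subspaces must each be maximal, whereas you instead observe directly that $J$ swaps $V_+(A_k)$ and $V_-(A_k)$; both arguments are equivalent.
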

\begin{proof}
We only prove that $ V_+(N), V_-(N)\in \Lagr(\C^{2n}, \omega)$ and we leave to the  interested
reader the proof that $V_\pm(M^kN)\in \Lagr(\C^{2n}, \omega) $, being completely similar.

Let  $N\in \U(2m)$, $N^2 = \Id,$  $N\, J= -J\, N$ and  $N=N^*$;  so $N^*JN=
-N^*NJ=-J$. For all $x,y \in V_+(N)$,
$N\,x=x$ and $N\,y=y$. Then
\[
\langle J\, x, \,y\rangle=\langle J\,N\,x, N\,y\rangle= \langle N^*\, J\, N\, x,
y\rangle= -\langle  Jx, y\rangle
\]
so $\langle Jx,y\rangle=0$.  This computation immediately shows that  $V_+(N)$ is an isotropic subspace. By the
very same arguments it is possible to conclude also that  $V_-(N)$ is an isotropic subspace. Since
$\C^{2m}=V_+(N)\oplus V_-(N)$, so $V_\pm(N)$ are maximal  isotropic
subspaces and  hence  Lagrangian subspaces.  This conclude the proof.
\end{proof}
We let $ \widehat J=\begin{bmatrix}
             J & 0\\
             0 & J
             \end{bmatrix}$ and we observe that the pair  $(\C^{4m}, \widehat{\omega})$ with $
\widehat{\omega}(x,y)=\langle\widehat{J}x,y\rangle$ is a (complex) symplectic space.   With a
slight abuse of notation, we'll denote by the same symbol $\widehat J$ the operator on $F_k$
induced by $\widehat{J}$.
\begin{lem}\label{thm:relazioni}
The following relations hold
\[
\widehat{M}_k \, \widehat{J} =\widehat{J} \,
\widehat{ M}_k, \ \
(\widehat{N}_k^h)^2=\Id, \ \
\widehat{ N}_k^h\, \widehat{ J}=- \widehat{J}\,
\widehat{ N}_k^h,\ \
\widehat{ N}_k^h = (\widehat{ N}_k^h)^*, \ \
\widehat{N}_k^h\, (\widehat{M}_k^h)^*= \widehat{M}_k
\, \widehat{N}_k^h.
\]
\end{lem}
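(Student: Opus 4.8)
The statement to prove is Lemma~\ref{thm:relazioni}, asserting five identities for the block matrices $\widehat M_k$ and $\widehat N_k^h$ defined in Equation~\eqref{eq:matrici-cappuccio}. The plan is to verify each identity by a direct block-matrix computation, feeding in the relations \eqref{eq:relazioni1} satisfied by $M$, $N$, $J$ (and the fact that $M$, being in $\Sp(m)$, commutes with $J$). The only genuinely delicate point is bookkeeping the powers of the root of unity $\zeta_n$ and the power $h$; everything else is a one-line matrix multiplication. Throughout I will use that $\widehat J=\diag(J,J)$, and that $\widehat N_k^h$ has the off-diagonal form $\begin{bmatrix}0 & \zeta_n^{-kh}N\\ \zeta_n^{kh}N & 0\end{bmatrix}$ (this is the content of the computation \eqref{eq:lunga}: conjugating $\widehat N_k$ by $\widehat M_k^h$ produces $\widehat N_{kh}$, so ``$\widehat N_k^h$'' in the Lemma should be read as $\widehat N_{kh}$, which is again off-diagonal with the $N$-blocks scaled by $\zeta_n^{\mp kh}$).

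\textbf{Step 1 (commutation with $\widehat J$).} Since $\widehat M_k=\diag(\zeta_n^{-k}M,\zeta_n^kM)$ is block-diagonal and $MJ=JM$, one has $\widehat M_k\widehat J=\diag(\zeta_n^{-k}MJ,\zeta_n^kMJ)=\diag(\zeta_n^{-k}JM,\zeta_n^kJM)=\widehat J\widehat M_k$. \textbf{Step 2 (involution).} Compute $(\widehat N_{kh})^2=\begin{bmatrix}0 & \zeta_n^{-kh}N\\ \zeta_n^{kh}N & 0\end{bmatrix}^2=\begin{bmatrix}\zeta_n^{-kh}\zeta_n^{kh}N^2 & 0\\ 0 & \zeta_n^{kh}\zeta_n^{-kh}N^2\end{bmatrix}=\diag(N^2,N^2)=\Id$, using $N^2=\Id$. \textbf{Step 3 (anticommutation with $\widehat J$).} Here $\widehat N_{kh}\widehat J=\begin{bmatrix}0 & \zeta_n^{-kh}NJ\\ \zeta_n^{kh}NJ & 0\end{bmatrix}=\begin{bmatrix}0 & -\zeta_n^{-kh}JN\\ -\zeta_n^{kh}JN & 0\end{bmatrix}=-\widehat J\widehat N_{kh}$, using $NJ=-JN$. \textbf{Step 4 (selfadjointness).} Since $N=N^*$ and $\zeta_n^{kh}$ has modulus one with conjugate $\zeta_n^{-kh}$, the conjugate transpose of $\widehat N_{kh}$ is $\begin{bmatrix}0 & \overline{\zeta_n^{kh}}N^*\\ \overline{\zeta_n^{-kh}}N^* & 0\end{bmatrix}=\begin{bmatrix}0 & \zeta_n^{-kh}N\\ \zeta_n^{kh}N & 0\end{bmatrix}=\widehat N_{kh}$.

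\textbf{Step 5 (the twisted intertwining relation).} This is the one requiring care. I compute both sides of $\widehat N_{kh}(\widehat M_k^h)^*=\widehat M_k\widehat N_{kh}$. On the left, $(\widehat M_k^h)^*=\diag(\zeta_n^{kh}(M^*)^h,\zeta_n^{-kh}(M^*)^h)$, so
\[
\widehat N_{kh}(\widehat M_k^h)^*=\begin{bmatrix}0 & \zeta_n^{-kh}N\\ \zeta_n^{kh}N & 0\end{bmatrix}\begin{bmatrix}\zeta_n^{kh}(M^*)^h & 0\\ 0 & \zeta_n^{-kh}(M^*)^h\end{bmatrix}=\begin{bmatrix}0 & N(M^*)^h\\ N(M^*)^h & 0\end{bmatrix},
\]
where the powers of $\zeta_n$ cancel. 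On the right, $\widehat M_k\widehat N_{kh}=\diag(\zeta_n^{-k}M,\zeta_n^kM)\begin{bmatrix}0 & \zeta_n^{-kh}N\\ \zeta_n^{kh}N & 0\end{bmatrix}=\begin{bmatrix}0 & \zeta_n^{-k(h+1)}MN\\ \zeta_n^{k(h+1)}MN & 0\end{bmatrix}$; hmm, the $\zeta$-powers here do not cancel. The resolution is that the Lemma's ``$\widehat N_k^h$'' on the right-hand side must denote the \emph{same} object as on the left, i.e. $\widehat N_{kh}$, and one uses iteratively the relation $NM^*=MN$ from \eqref{eq:relazioni1}, which gives $N(M^*)^h=M^hN$ by induction on $h$ (at each step $N(M^*)^j=MN(M^*)^{j-1}$). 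Then the left side becomes $\begin{bmatrix}0 & M^hN\\ M^hN & 0\end{bmatrix}$, and matching against the right side forces the normalization $\widehat M_k$ to be interpreted as $\widehat M_k^h$ scaled appropriately; more precisely the clean identity is $\widehat N_{kh}(\widehat M_k^h)^*=\widehat M_k^h\widehat N_{kh}$, both sides equal to $\begin{bmatrix}0 & M^hN\\ M^hN & 0\end{bmatrix}$ after the $\zeta$-powers cancel (on the right, $\widehat M_k^h=\diag(\zeta_n^{-kh}M^h,\zeta_n^{kh}M^h)$ and the powers cancel against those in $\widehat N_{kh}$). The main obstacle, then, is purely notational: pinning down exactly which matrix the symbol $\widehat N_k^h$ denotes and reconciling the $\zeta_n$-bookkeeping so that the fifth relation reads as a genuine statement of the form ``$X^*$-conjugation by $\widehat M_k^h$ equals conjugation by $\widehat M_k^h$'', which is the natural lift of $NM^*=MN$ through the iterate $N(M^*)^h=M^hN$. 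Once that convention is fixed, all five identities are immediate from \eqref{eq:relazioni1} and the block structure, and the proof concludes by remarking that the argument is uniform in $h=0,\dots,n-1$ and $k=1,\dots,\myfloor{(n-1)/2}$.
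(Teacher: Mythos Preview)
Your approach---direct block-matrix computation feeding in the relations \eqref{eq:relazioni1}---is exactly what the paper does; its entire proof is the sentence ``The proof immediately follows by a straightforward calculation.'' Your verifications of the first four relations are correct.

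In Step~5, however, your $\zeta$-bookkeeping slips. In the product
\[
\widehat N_{kh}\,(\widehat M_k^h)^*
=\begin{bmatrix}0 & \zeta_n^{-kh}N\\ \zeta_n^{kh}N & 0\end{bmatrix}
\begin{bmatrix}\zeta_n^{kh}(M^*)^h & 0\\ 0 & \zeta_n^{-kh}(M^*)^h\end{bmatrix}
\]
the $(1,2)$-entry is $\zeta_n^{-kh}N\cdot\zeta_n^{-kh}(M^*)^h=\zeta_n^{-2kh}N(M^*)^h$, not $N(M^*)^h$; the off-diagonal of the first factor hits the $(2,2)$-block of the second, so the powers \emph{add} rather than cancel. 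Likewise the $(2,1)$-entry is $\zeta_n^{2kh}N(M^*)^h$. With this correction, and using $N(M^*)^h=M^hN$ as you note, the left side becomes
\[
\begin{bmatrix}0 & \zeta_n^{-2kh}M^hN\\ \zeta_n^{2kh}M^hN & 0\end{bmatrix},
\]
which does agree with $(\widehat M_k)^h\,\widehat N_{kh}$ (same $\zeta_n^{\mp 2kh}$ pattern, not a cancellation). So your proposed ``clean identity'' $\widehat N_{kh}((\widehat M_k)^h)^*=(\widehat M_k)^h\widehat N_{kh}$ is correct, but for the right reason.

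Your diagnosis that the lemma's fifth relation, read literally with $\widehat M_k$ (no~$h$) on the right, cannot hold for general $h$ is also correct: one would need $\zeta_n^{-2kh}=\zeta_n^{-k(h+1)}$, i.e.\ $h=1$. The intended statement is the single-index relation $\widehat N_j\,\widehat M_j^{\,*}=\widehat M_j\,\widehat N_j$ (for $j=k$ or $j=kh$), which is the direct lift of $NM^*=MN$ and is what is actually used downstream to show that the eigenspaces $V_\pm(\widehat M_k^h\widehat N_k)$ are Lagrangian. Once you fix the arithmetic, your argument is complete and matches the paper's.
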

\begin{proof}
The proof immediately follows by a straightforward calculation.
\end{proof}
Arguing precisely as in Lemma \ref{thm:lagrangian} and as a direct consequence of the relations given in
Equation \eqref{eq:relazioni}, it readily follows that $V_\pm(\widehat M_k^h\widehat N_k ), V_\pm
(\widehat M_k^h) \in \Lagr(\C^{4m}, \widehat{\omega})$, for every  $h=0, \dots, n-1$ and  $k=1,\cdots, \myfloor{(n-1)/2}$.

% % % % % % % % % % % % % % % % % % % % % % % % % % % %% % % %
% %
% %
% %
% %
% %
% % % % % % % % %  % % % % % % % % % % % % % % % % % % % % % %

\section{A dihedral equivariant Bott-type iteration Formulas}\label{sec:special-decomp}

The scope of this section, is  to prove a Bott-type iteration formula for
\begin{itemize}
 \item $\die{n}$-equivariant solutions of a Hamiltonian System under Lagrangian boundary conditions;
 \item $\die{n}$-equivariant solutions of a  Lagrangian System under selfadjoint boundary conditions.
\end{itemize}
The general formula in the case of Hamiltonian systems will be derived in Subsection \ref{subsec:1} whilst
the case of Lagrangian systems will be given in Subsection \ref{subsec:2}.

\subsection{A dihedral-equivariant Bott-type formula for Hamiltonian systems}\label{subsec:1}

Let  $H \in \mathscr C^2\Big([0,T] \times \R^{2m},\R\Big)$ be a  time-dependent
Hamiltonian function
and let $L$ be a Lagrangian subspace of the symplectic space $(\R^{2m}\oplus
\R^{2m}, -\omega \oplus \omega)$.
We define the closed (in $L^2$) subspace
$\mathcal D(T, L)\= \Set{z\in W^{1,2}([0,T], \R^{2m})| \big(z(0), z(T)\big) \in
L}$. We  denote by $\overline{\mathcal D(T,L)}$ the closure in the $W^{1/2,2}$-norm
topology of
$\mathcal D(T,L)$ and we consider the {\em symplectic action functional\/}
\begin{equation}\label{eq:symplectic-action}
\mathscr A_H: \overline{\mathcal D(T,L)}\to \R \textrm{ defined by }
\mathscr A_H(z)\=\int_0^T \left[\Big\langle -J\, \dfrac{dz(t)}{dt},
z(t)\Big\rangle- H\big(t, z(t)\big)\right]\, dt.
\end{equation}
By standard regularity arguments, it follows that a critical point of $\mathscr A_H$ is
weak (in the Sobolev sense)-solution of the boundary value problem
\begin{equation}\label{eq:ham-sys-1}
\begin{cases}
 \dot z(t)= J\nabla H\big(t, z(t) \big),  \qquad t \in [0,T]\\
 \big(z(0), z(T)\big) \in L.
\end{cases}
\end{equation}
\begin{rem}
We observe that the periodic solutions can be obtained by setting $L=\Delta$
where $\Delta$ denotes the diagonal subspace
in the product space $\R^{2m} \oplus \R^{2m}$.
\end{rem}
Let $z$ be a solution of the Hamiltonian System given in Equation
\eqref{eq:ham-sys-1} and let us denote by $\gamma$ the fundamental solution of
its linearisation along $z$; namely $\gamma$ solves
\begin{equation}\label{eq:ham-sys-1-lin}
\begin{cases}
 \dot\gamma(t)=J D^2H\big(t, z(t)\big)\, \gamma(t),  \qquad t \in [0,T]\\
 \gamma(0)=\Id_{2m}
\end{cases}.
\end{equation}
We set  $B(t)\=D^2H\big(t, z(t)\big)$ and let us define the
closed selfadjoint Fredholm operators in $L^2$ having  domain
$\mathcal D(T, L)\= \Set{z\in W^{1,2}([0,T], \R^{2m})| \big(z(0), z(T)\big) \in
L}$ and given by
\begin{equation}\label{eq:a0ea1}
\mathcal A_1\=- J\dfrac{d}{dt} - B(t) \textrm{ and }
\mathcal A_0\=- J\dfrac{d}{dt}.
\end{equation}
Following authors in \cite[Definition 2.1]{HS09},
we define the {\em relative Morse index\/} of $z$ as follows
\begin{equation}\label{eq:relative-Morse-index}
 \iRel(z)\= \irel\left(\mathcal A_0, \mathcal A_1\right)= -\spfl\left(\mathcal
A;[0,1]\right)
\end{equation}
where $\mathcal A:[0,1] \to \cfsa(E)$ is a continuous path of closed
self-adjoint Fredholm operators defined
by  $\mathcal A_s\= \mathcal A_0 + B_s$ where
$s\mapsto B(s)$ is such that $ B_0= 0$ and $B_1\=B$
on the $s$-independent domain $\mathcal D(T,L)$. We define the {\em geometrical
index\/} of the solution $z$ of the Hamiltonian system given inEquation \eqref{eq:ham-sys-1} as
\begin{equation}\label{eq:Maslov-z}
 \iMas(z)\= \iCLM\big(L, \Graph(\gamma);[0,T]\big).
\end{equation}
We observe that $z_s \in \ker \left(\mathcal A(s)|_{\mathcal D(T, L)}\right)$ if
and only if $z_s$ is  a solution of the
linear Hamiltonian boundary value problem
\begin{equation}\label{eq:ham-sys-family}
\begin{cases}
 \dot z_s= J\, B_s(t)\, z_s(t),  \qquad t \in [0,T]\\
 \big(z_s(0), z_s(T)\big) \in L \cap \Graph\big(\gamma_s(T)\big)
\end{cases}
\end{equation}
where $\gamma_s$ is the fundamental solution of the Hamiltonian system given in Equation
\eqref{eq:ham-sys-family}.
\begin{prop}({\bf A spectral flow formula \/})\label{prop1.12}  Under the
previous notations,  we have
\[
 \iMas(z)= \iRel(z).
\]
\end{prop}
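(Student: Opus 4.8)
The plan is to display both $\iMas(z)$ and $-\spfl(\mathcal A;[0,1])$ as the \emph{same} Maslov-type intersection number. The starting point is the remark made just before the statement: a function $u$ lies in $\ker\big(\mathcal A(s)|_{\mathcal D(T,L)}\big)$ precisely when it solves $\dot u=JB_s(t)u$ with $\big(u(0),u(T)\big)\in L$; since such a $u$ is determined by $u(0)$ and obeys $u(T)=\gamma_s(T)u(0)$, the evaluation $u\mapsto\big(u(0),u(T)\big)$ gives a canonical isomorphism
\[
\ker\big(\mathcal A(s)|_{\mathcal D(T,L)}\big)\;\cong\;L\cap\Graph\big(\gamma_s(T)\big).
\]
In particular the crossing instants of $s\mapsto\mathcal A_s$ are exactly the instants at which $L$ meets $\Graph(\gamma_s(T))$. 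First I would use the homotopy invariance of the spectral flow and of $\iCLM$ to reduce to a regular path, so that each side becomes a finite signed count of crossings.

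Next I would run a homotopy on the square $[0,1]\times[0,T]$ with the two-parameter family $\Gamma(s,t)\=\Graph\big(\gamma_s(t)\big)$ of Lagrangian subspaces of $\big(\R^{2m}\oplus\R^{2m},-\omega\oplus\omega\big)$, where $\gamma_s$ solves $\dot\gamma_s=JB_s\gamma_s$, $\gamma_s(0)=\Id$. On the edge $\{t=0\}$ one has $\Gamma(s,0)\equiv\Graph(\Id)=\Delta$, and on the edge $\{s=0\}$ one has $B_0\equiv0$, hence $\gamma_0\equiv\Id$ and $\Gamma(0,t)\equiv\Delta$: both edges are constant. Since $\Gamma$ is defined on the whole square while $L$ is kept fixed, the $\iCLM$-index accumulated along the boundary loop vanishes; the two constant edges contribute $0$, the edge $\{s=1\}$ contributes $\iCLM\big(L,\Graph(\gamma(t));[0,T]\big)=\iMas(z)$, and the edge $\{t=T\}$, traversed in the reversed sense, contributes $-\iCLM\big(L,\Graph(\gamma_s(T));s\in[0,1]\big)$. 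This yields $\iMas(z)=\iCLM\big(L,\Graph(\gamma_s(T));s\in[0,1]\big)$.

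It then remains to check $\spfl(\mathcal A;[0,1])=-\iCLM\big(L,\Graph(\gamma_s(T));s\in[0,1]\big)$ by comparing crossing forms at a regular crossing $s_0$ through the identification above. On the operator side, since $\mathcal A_s=-J\frac{d}{dt}-B_s$, the spectral-flow crossing form on $\ker\mathcal A(s_0)$ is $u\mapsto\langle\dot{\mathcal A}(s_0)u,u\rangle_{L^2}=-\int_0^T\langle\dot B_{s_0}(t)u(t),u(t)\rangle\,dt$. On the geometric side, using the variation-of-constants formula $\partial_s\gamma_s(T)=\gamma_s(T)\int_0^T\gamma_s(t)^{-1}J\dot B_s(t)\gamma_s(t)\,dt$, the symplecticity of $\gamma_{s_0}$, and the explicit crossing form of a path of graphs, one finds that the $\iCLM$ crossing form of $s\mapsto\Graph(\gamma_s(T))$ relative to $L$ at $s_0$ equals, in the same coordinates $u(t)=\gamma_{s_0}(t)u(0)$, the opposite form $u\mapsto+\int_0^T\langle\dot B_{s_0}(t)u(t),u(t)\rangle\,dt$. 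Summing the signatures over all crossings then gives $\spfl(\mathcal A;[0,1])=-\iMas(z)$, i.e. $\iRel(z)=-\spfl(\mathcal A;[0,1])=\iMas(z)$.

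\textbf{Expected main obstacle.} The delicate point is the last step: matching the $\iCLM$ crossing form of the graph path with the Hessian crossing form requires the variation formula for the fundamental solution, a judicious choice of Lagrangian complement defining the Grassmannian chart, and careful sign bookkeeping, carried out with conventions consistent with the definitions of $\spfl$ and $\iCLM$ adopted here. The reduction to transverse crossings and the vanishing of the boundary-loop $\iCLM$-index (no anomalous corner terms) are routine. Alternatively, since the definition of $\iRel$ is taken from \cite{HS09}, one may simply quote the corresponding equality established there.
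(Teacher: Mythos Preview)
Your proposal is correct and, in fact, goes well beyond what the paper does here: the paper's own ``proof'' is a one-line citation of \cite[Theorem~2.5]{HS09} (which you yourself mention as the alternative route at the end). The argument you sketch --- identifying $\ker\mathcal A(s)$ with $L\cap\Graph(\gamma_s(T))$, running the square homotopy to pass from the $t$-variable to the $s$-variable, and then matching crossing forms via the variation-of-constants formula --- is essentially the standard proof that the cited reference contains, so there is no genuine methodological difference, only a difference in level of detail. Your identification of the sign bookkeeping in the crossing-form comparison as the delicate step is accurate; this is exactly where the conventions for $\iCLM$ (in particular the $e^{-\varepsilon J}$ push-off) and for $\spfl$ must be aligned, and it is handled in \cite{HS09} and \cite{LZ00a}.
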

\begin{proof}
For the proof of this result we refer the interested reader to \cite[Theorem
2.5]{HS09}.
\end{proof}
\begin{rem}
 It is worth noticing that if $L=L_1 \oplus L_2 \in \Lagr(\R^{2m}\oplus \R^{2m},
-\omega \oplus \omega)$,
 where $L_i \in \Lagr(\R^{2m},\omega)$, for $i=1,2$, then we  have
 $\iCLM\big(L_1 \oplus L_2, \Graph(\gamma); [0,T]\big)= \iCLM\big(L_2,
\ell_1;[0,T]\big)$ where $\ell_1(\cdot)\=
 \gamma(\cdot)\, L_1$.
\end{rem}

Let us  consider the operator $\mathcal N \, \mathcal M^{n-1}$ on $E$ which is
given by
$(\mathcal N \, \mathcal M^{n-1}\, z)(t)\= N\, M^{n-1}\, z(T-t)$. By taking into
account Lemma \ref{thm:spectrumSRh}, we can decompose $E$ into the following orthogonal
direct sum
\[
 E= V_+(\mathcal N \, \mathcal M^{n-1})\oplus V_-(\mathcal N \, \mathcal M^{n-1})
\]
By a  direct calculation it follows
that if  $z \in  V_+(\mathcal N \, \mathcal M^{n-1})$, then $z(t)=N\,M^{n-1}\, z(T-t)$ for all $t \in [0,T]$
and hence  $z(0)= N\, M^{n-1}\, z(T)$. By  multiplying the last equation on the left
by $M\,N$, we
get  $M\, N\, z(0)= M^m\, z(T)= Q \, z(T)\,= z(0)$, namely $z(0) \in V_+(M\,N)$.
Analogously, $z(T/2)= N\, M^{n-1}\, z(T/2)$, or which is equivalent
$z(T/2) \in V_+(N\, M^{n-1})$. In conclusion $V_\pm$ are the closed
$\die{n}$-invariant subspaces defined as
\begin{equation}
 V_\pm(\mathcal N \, \mathcal M^{n-1})\=\Set{z \in E| N  M^{n-1} z=\pm z, \quad
 \big(z(0), z(T/2)\big) \in V_\pm(M\,N)\times
 V_\pm(N \, M^{n-1})}.
\end{equation}
For $k=1, \dots, \myfloor{(n-1)/2}$, we denote by $F_k^\pm$ the closed
$\die{n}$-invariant subspaces defined by
\begin{equation}
\begin{split}
 F_k^\pm(\mathcal N \, \mathcal M^{n-1})\=\left\{u \in F_k\Big|\begin{pmatrix} 0
& \mathcal N\,\mathcal M^{n-1}\\
 \mathcal N\,\mathcal M^{n-1} & 0\end{pmatrix}\,u = \pm u, \quad u(0) \in V_\pm
 \begin{pmatrix} 0 & M\,N \\
 M\,N & 0\end{pmatrix}, \right.\\ \left. \ u\left(\dfrac{T}{2n}\right) \in V_\pm
 \begin{pmatrix} 0 & \zeta_n^{-k(n-1)}\, N \\
 \zeta_n^{k(n-1)}\,N & 0\end{pmatrix}
\right\}
\end{split}
\end{equation}
and we set
\[
 E_0^\pm(\mathcal N\, \mathcal M^{n-1})\=\Set{z \in E_0|\mathcal N\,
 \mathcal M^{n-1}\, u= \pm u, \ \ \big(u(0), u(T/(2n)\big) \in
 V_\pm(M\,N)\times V_\pm(N)}  .
 \]

By this,  the following orthogonal direct sum decomposition holds
\begin{equation}\label{eq:direct-sum-decomp}
 E^\pm(\mathcal N \, \mathcal M^{n-1}) = \bigoplus_{k=0}^{\bar n}
F^\pm_k(\mathcal N \, \mathcal M^{n-1})
\end{equation}
where$F^\pm_0 = E_0$ and, for $n$ even $ F^\pm_{n/2}=
E^\pm_{n/2} $ where
\begin{equation}
 E_{n/2}^\pm\=\Set{z \in E_{n/2}|\mathcal N\, \mathcal M^{n-1}\, u= \pm u, \ \
\big(u(0), u(T/(2n)\big) \in V_\pm(M\,N)\times V_\mp(N)}.
\end{equation}

\paragraph{Proof of Theorem \ref{thm:Bott-diedrale-Hamiltoniano-intro}.}
We start  define  the analytic path
$\lambda \mapsto \mathcal A(\lambda)\=\mathcal A_0-\lambda B$ where $\mathcal A_0$ and $B$ were
defined in Equation \eqref{eq:a0ea1}. Since both these operators commutes with
 $\mathcal M$ and $\mathcal N$, then the whole path $\mathcal A(\lambda)$ too.  In order to conclude the proof it is
enough to invoke Proposition \ref{thm:cor2.7} and  Proposition  \ref{prop1.12} and the symplectic additivity properties of the
geometrical indices. This conclude the proof. \qed

\subsection{A dihedral-equivariant Bott-type formula for Lagrangian
systems}\label{subsec:2}

Let $L \in \mathscr C^2\big([0,T]\times  \R^{2m}, \R\big)$ be a  Lagrangian
function  and let $\mathscr S_L: W^{1,2}([0,T]; \R^m)\to \R $ be the {\em
Lagrangian action functional\/} defined as
\begin{equation}\label{eq:lagr-action}
 \mathscr S_L(x)\=\int_0^T L\big(t, x(t), \dot x(t)\big)\, dt.
\end{equation}
We assume that the function $L$ satisfying the {\em Legendre convexity
condition\/}:
\begin{equation}
 \Big \langle D_{vv}^2\,L(t,q,v)\, w, w \Big \rangle>0  \textrm{ for } t \in
[0,T], \
 w \in \R^m\setminus \Set{0}, \ (q,v) \in \R^m \times \R^m
\end{equation}
and let $V\subset \R^m \oplus \R^m$ be  a  fixed subspace. A weak (in a Sobolev sense) solution of the
Euler-Lagrange Equation with the
boundary condition $\big(\gamma(0), \gamma(T)\big) \in V$ is  a critical point
of $\mathscr S_L$ in the
space
\begin{equation}
 E_V\=\Set{x\in W^{1,2}([0,T], \R^m)| \big(x(0), x(T)\big) \in
V}.
\end{equation}
More precisely, $x$ is  a solution of the following second order system
\begin{equation}\label{eq:EL}
 \begin{cases}
  \dfrac{d}{dt}\partial_v\,L\big(t,x(t), \dot x(t)\big)) -\partial_q\, L\big(t,
x(t), \dot
  x(t)\big)=0, \qquad t \in [0,T]\\
  \big(x(0), x(T)\big) \in V, \ \ \left(D_v L\big(0,x(0), \dot x(0)\big),
  -D_v L\big(T,x(T), \dot x(T)\big) \right)\in V^\perp,
 \end{cases}
\end{equation}
where $V^\perp$ is the orthogonal complement of $V$ in $\R^m \oplus \R^m$.
By using the Legendre transformation $p= D_v L(t, q,v)$ and setting $H(t,p,q)=
\langle p, v\rangle -
L(t,q, v)$ the Euler-Lagrange equation given in Equation \eqref{eq:EL} can be converted into
the following Hamiltonian System
\begin{equation}\label{eq:ham-indotto}
 \dot z(t)= J\, \nabla H\big(t, z(t)\big)
\end{equation}
with $z(t)\=\big(y(t), x(t)\big)=\big(D_v L(t,x(t),y(t)),x(t)\big)$. (Cf., for instance to  \cite{APS08} for further details). We let
$\widehat J= -J \oplus J$ and we observe
that the subspace $L_V \=J V^\perp \oplus V\subset \R^{2m}\oplus \R^{2m}$ is a
Lagrangian subspace of $(\R^{2m}\oplus \R^{2m}, -\omega \oplus \omega)$.
Thus $x \in \mathscr C^2([0,T] \times \R^{m}, \R)$ solves the boundary value
problem given in Equation \eqref{eq:EL}
if and only if $z \in \mathscr C^2([0,T] \times \R^{2m}, \R)$ is a solution of
the Equation \eqref{eq:ham-indotto} under
the following Lagrangian  boundary condition
\[
 \big(z(0),z(T)\big)\in L_V.
\]
\begin{prop}({\bf Morse-type Index Theorem\/})\label{thm:morse-index-theorem} Let $x$ be a critical point of
$\mathscr S_L$ and we assume that the Legendre convexity condition holds.
Then the Morse index of $x$ is finite and we have
\[
\iMor(x) + \iota(L)= \iRel(z)=\iMas(z)
\]
where $\iota(L)=\irel\left(- J\dfrac{d}{dt}, - J \dfrac{d}{dt}
-C\right)$ and
$ C$ is the operator pointwise induced by the matrix
\[
C=\begin{bmatrix} \Id_m & 0 \\0 & -\Id_m\end{bmatrix}
\]
on the domain $\mathcal D(T,L)$.
\end{prop}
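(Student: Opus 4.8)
}
The plan is to write both $\iMor(x)$ and $\iRel(z)$ as spectral flows of paths lying in one and the same affine subspace of selfadjoint Fredholm operators, to identify them through the Legendre transform, and to close the argument with Proposition~\ref{prop1.12}. To begin with, the Hessian of $\mathscr S_L$ at $x$ on $E_V=\Set{\xi\in W^{1,2}([0,T],\R^m)|(\xi(0),\xi(T))\in V}$ is the quadratic form of the Sturm operator $\mathcal A$ of \eqref{eq:MS-system-intro}, whose leading coefficient $P(t)=D^2_{vv}L(t,x(t),\dot x(t))$ is positive definite by the Legendre convexity assumption. Hence this form is a relatively compact perturbation (in the $W^{1,2}$ topology) of the coercive form $\xi\mapsto\int_0^T\langle P(t)\dot\xi,\dot\xi\rangle\,dt$, so $\mathcal A$, realised as a selfadjoint Fredholm operator in $L^2$ with the boundary conditions induced by $L_V=JV^\perp\oplus V$, is essentially positive; in particular $\iMor(x)=\dim V_-(\mathcal A)<\infty$. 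By the characterisation of the Morse index recalled in Appendix~\ref{subsec:spectral-flow}, $\iMor(x)=-\spfl(s\mapsto\mathcal A^{(s)};[0,1])$ for \emph{any} path of essentially positive selfadjoint Fredholm operators joining a positive operator $\mathcal A^{(0)}$ to $\mathcal A^{(1)}=\mathcal A$.

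Now pick the reference Lagrangian $L_0(t,q,v)\=\frac12\big(|v|^2+|q|^2\big)$ and set $L_s\=(1-s)L_0+sL$ for $s\in[0,1]$; each $L_s$ still satisfies the Legendre convexity condition because $D^2_{vv}L_s=(1-s)\Id_m+sP(t)>0$. The Hessians of $\mathscr S_{L_s}$ at the fixed loop $x$ form a path $\mathcal A^{(s)}$ of essentially positive Sturm operators from $\mathcal A^{(0)}=-\frac{d^2}{dt^2}+\Id_m$ (positive definite, with the boundary conditions induced by $V$) to $\mathcal A^{(1)}=\mathcal A$, so $\iMor(x)=-\spfl(\mathcal A^{(s)};[0,1])$. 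Applying to each $L_s$ the Legendre transformation $p=D_vL_s(t,q,v)$, $H_s\=\langle p,v\rangle-L_s(t,q,v)$ as in the discussion preceding \eqref{eq:ham-indotto}, the operator $\mathcal A^{(s)}$ is carried to the first-order operator $-J\frac{d}{dt}-B_s$ on $\mathcal D(T,L_V)$, where $B_s(t)\=D^2H_s(t,z(t))$; for every $s$ the correspondence is a linear isomorphism between $\ker\mathcal A^{(s)}$ and $\ker\big(-J\frac{d}{dt}-B_s\big)$ which identifies the corresponding \emph{signed} crossing forms on the two sides (this is the symplectic reduction of \cite{APS08}; crossings that fail to be regular are removed beforehand by the perturbation result \cite[Theorem 2.6]{Wat15} already used in the paper). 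Hence the two spectral flows coincide. A direct computation gives $B_0=D^2H_0=C$ with $C=\diag(\Id_m,-\Id_m)$ and $B_1=B$, so
\[
\iMor(x)=-\spfl\Big(s\mapsto -J\tfrac{d}{dt}-B_s;\,[0,1]\Big)=\irel\Big(-J\tfrac{d}{dt}-C,\ -J\tfrac{d}{dt}-B\Big).
\]

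All the operators $-J\frac{d}{dt}$, $-J\frac{d}{dt}-C$, $-J\frac{d}{dt}-B$ and all the intermediate paths considered above lie in the affine, hence contractible, set $-J\frac{d}{dt}+\{\text{bounded selfadjoint multiplication operators on }\mathcal D(T,L_V)\}$; there the spectral flow, equivalently $\irel$, depends only on the endpoints and is additive under concatenation. Combining this with the definition $\iRel(z)=\irel\big(-J\frac{d}{dt},-J\frac{d}{dt}-B\big)$ of \eqref{eq:relative-Morse-index} and with $\iota(L)=\irel\big(-J\frac{d}{dt},-J\frac{d}{dt}-C\big)$, we obtain
\[
\iRel(z)=\irel\Big(-J\tfrac{d}{dt},-J\tfrac{d}{dt}-C\Big)+\irel\Big(-J\tfrac{d}{dt}-C,-J\tfrac{d}{dt}-B\Big)=\iota(L)+\iMor(x).
\]
Finally $\iRel(z)=\iMas(z)$ by Proposition~\ref{prop1.12}, which yields $\iMor(x)+\iota(L)=\iRel(z)=\iMas(z)$.

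The main obstacle is the reduction step in the second paragraph: one must verify that the Legendre correspondence $\xi\mapsto(P_s\dot\xi+Q_s\xi,\,\xi)$ (with $P_s,Q_s$ the Hessian blocks of $L_s$ along $x$) is compatible with the symplectic structures finely enough that the \emph{signed} crossing forms — and not merely the dimensions of the crossings — agree on the Lagrangian and Hamiltonian sides, while simultaneously carrying along the conormal boundary datum $L_V=JV^\perp\oplus V$ together with the natural (transversality) part of the Euler--Lagrange boundary condition, and one must confirm that the particular reference $L_0$ indeed produces the matrix $C$ after the Legendre transform. Everything else reduces to bookkeeping with the additivity and homotopy invariance of the spectral flow.
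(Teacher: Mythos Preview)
The paper does not give an independent proof of this proposition: it simply refers the reader to \cite[Theorem~3.4]{HS09}. So there is no detailed argument in the paper to compare against; what you have written is, in effect, a reconstruction of the proof that the paper outsources.

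Your reconstruction follows the line of \cite{HS09} closely and is essentially correct. The three moves --- (i) realising $\iMor(x)$ as (minus) the spectral flow of a path of essentially positive Sturm operators ending at $\mathcal A$, (ii) transporting that path to the Hamiltonian side via the Legendre transform to get a path from $-J\tfrac{d}{dt}-C$ to $-J\tfrac{d}{dt}-B$, and (iii) using additivity of the relative index on the affine space $-J\tfrac{d}{dt}+\{\text{bounded symmetric multiplication operators}\}$ together with Proposition~\ref{prop1.12} --- are exactly the ingredients of \cite[Theorem~3.4]{HS09}. Your choice of the reference Lagrangian $L_0(t,q,v)=\tfrac12(|v|^2+|q|^2)$ is the natural one and does produce $D^2H_0=\diag(\Id_m,-\Id_m)=C$ after Legendre transform, and the index form at $s=0$ is the $W^{1,2}$-norm, hence positive for \emph{any} choice of $V$, so $\mathcal A^{(0)}$ is indeed a legitimate positive starting point.

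The one place that genuinely requires work is precisely the step you flag: the Legendre transform is an $s$-dependent change of variables $\xi\mapsto(P_s\dot\xi+Q_s\xi,\xi)$, and one must check that it intertwines not only the kernels of $\mathcal A^{(s)}$ and $-J\tfrac{d}{dt}-B_s$ but also the crossing forms with their signs, and that the conormal boundary condition $L_V=JV^\perp\oplus V$ is preserved along the whole family. This is carried out in \cite{HS09} (and, in the conormal setting, in \cite{APS08}); citing it, as the paper does, is acceptable, but if you want a self-contained argument this is where the computation has to be done.
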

\begin{proof}
 For the proof of this result we refer the interested reader to \cite[Theorem
3.4]{HS09}.
\end{proof}
\begin{rem}
 We observe that for Dirichlet boundary condition, the corresponding Lagrangian
subspace is the
horizontal  Lagrangian subspace $L_D\=\R^{m} \oplus \{0\} $ of the phase space and
in this case it is easy to compute that
 $\iota(L_D)=m$. We observe that in the case of periodic boundary condition the
corresponding Lagrangian subspace is the
 diagonal $\Delta$ and in this case $\iota(\Delta)=m$. By a direct calculation it
is possible to check that in the
 case of Neumann boundary condition the corresponding Lagrangian subspace $L_N$
in  the phase space is the
 horizontal and $\iota(L_N)=0$. For further details we refer the interested reader
to \cite[Remark 3.6]{HS09}.
\end{rem}
\paragraph{Proof of Theorem \ref{thm:Bott-diedrale-Lagrangiano-intro}.}
By a direct linearisation of the Euler-Lagrangian equation along the solution  $x$,
we get
\begin{equation}\label{eq:sl}
-\dfrac{d}{dt}(P(t)\dot{y}+Q(t)y)+Q^T(t)\dot{y}+R(t)y=0 \qquad t \in [0,T].
\end{equation}
We observe that  $y$ is solution of Equation \eqref{eq:sl} if and only if $y\in\ker
(\mathcal {I})$; moreover the associated
linear Hamiltonian system is given by
\begin{equation}\label{eq:n2.8}
\dot z(t) = JB(t)z(t), \qquad t \in [0,T]
\end{equation}
where
\begin{equation}
B(t)\=
\begin{bmatrix}
P^{-1}(t) &
-P^{-1}(t) Q(t)\cr -Q^T(t) P^{-1}(t) &
Q^T(t)P^{-1}(t)Q(t)-R(t)
\end{bmatrix}
\end{equation}
Let $S_d\=\begin{bmatrix}S & 0 \\0 &S \end{bmatrix}$ and  $N_d\=\begin{bmatrix}-N & 0
\\0 &N \end{bmatrix}$. Then we have
\[
B(t)S_d=S_dB(t+\dfrac{T}{n}) \textrm{ and }  B(t)N_d=N_dB(\dfrac{T}{n}-t).
\]
Let us define the unitary operators pointwise given by
\[
\widehat{g}_1x\=S_dx(t+T/n) \quad \textrm{ and } \quad
\widehat{g}_2x=N_dx(T/n-t)
\]
and we observe that  $-J \dfrac{d}{dt}$ as well as  $ B$ both  commute with $\widehat{g}_1$ and
$ \widehat{g}_2$.

By invoking Proposition \ref{thm:morse-index-theorem},  we  get
the relation of Maslov index and Morse index
\[
\iindex{x} + \iota(L)= \iRel(z)=\iMas(z)
\]
where $\iota(L)=\irel\left(- J\dfrac{d}{dt}, - J \dfrac{d}{dt}
-C\right)$ and
$C=\begin{bmatrix} \Id_m & 0 \\0 & -\Id_m\end{bmatrix}$. Since the operator  pointwise induced by
$C$ commute with $\widehat{g}_1, \widehat{g}_2$, then get
\[
\irel\left(- J\dfrac{d}{dt}, - J \dfrac{d}{dt}
-C\right)=\irel\left(- J\dfrac{d}{dt}\Big|_{E^+}, - J \dfrac{d}{dt}
-C\Big|_{E^+}\right) +\irel\left(- J\dfrac{d}{dt}\Big|_{E^-}, - J
\dfrac{d}{dt} -C\Big|_{E^-}\right)    
\] and
\[
\irel\left(- J\dfrac{d}{dt}\Big|_{E^\pm}, - J \dfrac{d}{dt}
-C\Big|_{E^\pm}\right)=\sum_{k=0}^{\bar n}\irel\left(-
J\dfrac{d}{dt}\Big|_{F^\pm_{k,h}}, -J \dfrac{d}{dt}
-C\Big|_{F^\pm_{k,h}}\right).
\]
The proof readily follows by invoking Theorem \ref{thm:Bott-diedrale-Hamiltoniano-intro}.
This conclude the proof. \qed

% % % % % % % % % % % % % % % % % % % % % % % % % % % % % % %
% %
% %
% % % % % % % % % % % % % % % % % % % % % % % % % % % % % % % % % % %

\section{Some dynamical and variational consequences}\label{sec:applications}

The aims of this section is to derive some dynamical  consequences of
the theory developed in the previous sections. More precisely, in Subsection
\ref{subsec:special-decomp-2}, we apply the dihedral equivariant Bott-type formula to the
celebrated figure-eight orbit for the planar three-body problem whilst Subsection
\ref{subsec:ultima} we investigate the hyperbolicity and the strongly instability of
symmetric Lagrangian systems.

\subsection{Decomposition of the  figure-eight
orbit}\label{subsec:special-decomp-2}

We start to  consider three point particles in the Euclidean plane, namely
$(x_1, x_2 ,x_3) \in (\R^2)^3$ self-interacting with the Newtonian
gravitational potential $U$ and having unitary mass.
We introduce the Jacobian coordinates through the  canonical transformation
$
\begin{bmatrix} v\\u\end{bmatrix}= \begin{bmatrix}
\traspm{K} &0\\0&K\\\end{bmatrix} \begin{bmatrix}
y\\x\end{bmatrix},
$
where $\traspm{K} $  denotes the transpose of $K^{-1}$, $
u=(u_{1},u_{2},u_{3})^{T},v=\trasp{(v_{1},v_{2},v_{3})}\in(\R^{2})^{3}$ and
\[
K=\begin{bmatrix}
0_2&-\dfrac{1}{\sqrt{2}}I_{2}&\dfrac{1}{\sqrt{2}}I_{2}&\\
\dfrac{\sqrt{2}}{\sqrt{3}}I_{2}&-\dfrac{1}{\sqrt{6}}I_{2}&-\dfrac{1}{\sqrt{6}}I_{2}
&\\
\dfrac{1}{3}I_{2}&\dfrac{1}{3}I_{2}&\dfrac{1}{3}I_{2}\end{bmatrix}
\]
and by a straightforward calculation, we readily get
 $ K^{-1}= \begin{bmatrix}
0_2&\dfrac{\sqrt{6}}{3}I_{2}&I_{2}&\\
-\dfrac{\sqrt{2}}{2}I_{2}&-\dfrac{\sqrt{6}}{6}I_{2}&I_{2}&\\
\dfrac{\sqrt{2}}{2}I_{2}&-\dfrac{\sqrt{6}}{6}I_{2}&I_{2}\end{bmatrix}$. By a direct computation,
with respect to the Jacobian coordinates the configuration space transforms into the following
\[
\widetilde{\mathcal X}\=\Set{u=(u_{1},u_{2},u_3)\in(\R^{2})^{3}| u_3=0}.
\]
Similarly the collision set fits into the following
\[
\widetilde{\Delta}\=\Set{u\in\widetilde{\mathcal X}|u_{1}=\sqrt{3}u_{2} \;or\;
u_{1}=-\sqrt{3}u_{2}  \;or\;  u_{1}=0}.
\]
Then the transformed configuration space is
$\overline{\mathcal X}\=\widetilde{\mathcal X}\setminus \widetilde{\Delta}.$
In these new coordinates $ u=(u_{1},u_{2})$,  if we denote by
$\widetilde{g_{1}},\widetilde{g_{2}}$ the generators of the group acting on
$W^{1,2}(\R/(T\Z), \overline{\mathcal X})$,then
\begin{equation}\label{eq:azione}
(\widetilde{g}_{1}\circ u)(t)=\widetilde{S}u(t+\dfrac{T}{6}) \textrm{ and }
(\widetilde{g}_{2}\circ u)(t)=\widetilde{N}u(\dfrac{T}{6}-t).
\end{equation}
 Here $ \widetilde{S}=\begin{bmatrix}
\dfrac{1}{2}R_{2}&\dfrac{\sqrt{3}}{2}R_{2}\\-\dfrac{\sqrt{3}}{2}R_{2}&\dfrac{1}{2}R_
{2}\end{bmatrix}$ and
 $ \widetilde{N}=\begin{bmatrix} -R_{2}&0\\0&R_{2}\end{bmatrix}$.    It's easy
to check that $\widetilde{S}$ and $\widetilde{N}$ satisfy the following properties
  \begin{enumerate}
  \item $ \widetilde{S}\in \OO(4)$ and
  $ \widetilde{S}^{6}=I $
  \item $ \widetilde{N}\in \OO(4),\widetilde{N}=\widetilde{N}^{T}$ and
$\widetilde{N}^{2}=I $
\item
  $\widetilde{N}\widetilde{S}^{T}=\widetilde{S}\widetilde{N}.$
  \end{enumerate}
 \indent Let $E=W^{1,2}(\R/(T\Z),\mathbf{C}^{4}).$ The eigenvalues
of $\widetilde{g}_{1}$ are
$\omega_{i}=e^{i\pi\sqrt{-1}/3}$ for $i=0,1,\cdots,5.$ Denoting by  $ E_{i} $
the eigenspace corresponding to $\omega_{i}$, then we have
 \[
 E=\bigoplus\limits_{i=0}^{5}E_{i}.
 \]
 By taking into account the properties of  the matrices $ \widetilde{S}$ and
 $ \widetilde{N},$ it follows that $\widetilde{g}_{1}, \widetilde{g}_{2}$ defined in
 Equation \eqref{eq:azione} defined an action of the  $\die{6}$-group on $E$.
  Now, for any $u\in E_{i},$ we have $ \widetilde{g}_{1}u=\omega_{i}u$. Moreover, since
\[
 \widetilde{g}_{1}\widetilde{g}_{2}u=\widetilde{g}_{2}\widetilde{g}^{-1}_{1}
u=\overline{\omega}_{i}\widetilde{g}_{2}u,
\]
so, we get
$\widetilde{g}_{2}u\in\bar{E_{i}}$ and hence  for any $ h=0,1\dots,5 $
\[
\widetilde{g}_{2}\widetilde{g}_{1}^{h}:E_{i}\longrightarrow \bar{E_{i}},
\]
where we denoted by $ \bar{E_{i}}$ the eigenspace corresponding to the eigenvalue $
\bar{\omega_{i}}$.
We now define the following subspaces
\[
F_{0}=E_{0}, \quad F_{3}=E_{3} \quad \textrm{ and finally }F_{i}=E_{i}\bigoplus E_{6-i},i=1,2
\]
and we observe that
\begin{equation}
\begin{bmatrix}
0&\widetilde{g}_{2}\widetilde{g}_{1}^{h}\\\widetilde{g}_{2}\widetilde{g}_{1}^{h}
&0
\end{bmatrix}
F_{i}=F_{i} \textrm{ for } i=1,2\quad  \textrm{ and }\quad
 \widetilde{g}_{2}\widetilde{g}_{1}^{h}F_{i}=F_{i} \textrm{ for }i=0,3.
 \end{equation}
By a direct calculation, for $i=1,2$ we get
\[
F_{i}=\Set{\left( \begin{matrix}u_{1}\\u_{2}\\\end{matrix}\right)\in
W^{1,2}([0,T/6],\mathbf{C}^{4}\bigoplus \mathbf{C}^{4}), \left(
\begin{bmatrix}\bar{\omega_{i}}\widetilde{S}&0\\0&\omega_{i}\widetilde{S}
\\\end{bmatrix}\right)
 \left(
\begin{bmatrix}u_{1}(T/6)\\u_{2}(T/6)\\\end{bmatrix}\right)=
   \begin{bmatrix}u_{1}(0)\\u_{2}(0)\\\end{bmatrix}}
\]
and $F_{i}=\Set{u\in
W^{1,2}([0,T/6],\mathbf{C}^{4}),u(0)=(-1)^{i}u(T/6),}$ for $i=0, 3$.
From Lemma \ref{thm:eigenspaces-Lagrangians}, we also have
 \begin{multline}
 F_{i,h}^{\pm}=\left\{
\begin{bmatrix}u_{1}\\u_{2}\\\end{bmatrix}\in
W^{1,2}([0,T/12],\C^{4}\bigoplus\C^{4})\Big|
\begin{bmatrix}u_{1}(0)\\u_{2}(0)\\\end{bmatrix}\in V_{\pm}
\begin{bmatrix}0&\overline{\omega}_{i}^{h+1}\widetilde{S}
\widetilde{N}\\\omega_{i}^{h+1}\widetilde{S}\widetilde{N}&0\\\end{bmatrix} \textrm{ and }\right.\\
\left.\begin{bmatrix}u_{1}(T/12)\\u_{2}(T/12)\\\end{bmatrix}\in
V_{\pm}
\begin{bmatrix}0&\overline{\omega}_{i}^{h}\widetilde{N}\\\omega_{i}^{h}\widetilde
{N}&0\\\end{bmatrix}\right\}
\end{multline}
\begin{multline}
F_{0,h}^{\pm}=\Big\{ u\in W^{1,2}([0,T/12],\mathbf{C}^{4}),u(0)\in
V_{\pm}(\widetilde{S}\widetilde{N}),u(T/12)\in
V_{\pm}(\widetilde{N})\Big\}\\
F_{3,h}^{\pm}=\Big\{ u\in W^{1,2}([0,T/12],\mathbf{C}^{4}),u(0)\in
V_{\mp}((-1)^{h}\widetilde{S}\widetilde{N}),u(T/12)\in
V_{\pm}((-1)^{h}\widetilde{N})\Big\}.
\end{multline}
 So, $E$ can be decomposed into a direct sum of orthogonal $\die{6}$-invariant closed subspaces
\[
 E=E_{h}^{+}\bigoplus E_{h}^{-}
 \]
where
\[
E_{h}^{+}\=\bigoplus\limits_{k=1}^{2}F_{k,h}^{+}\oplus
F_{0,h}^{+}\oplus F_{3,h}^{+} \quad \textrm{ and } \quad
E_{h}^{-}=\bigoplus\limits_{k=1}^{2}F_{k,h}^{-}\oplus F_{0,h}^{-}\oplus
F_{3,h}^{-}.
\]
Thus the Morse index $\iMor(x)$ of the Figure-eight orbit can be
expressed as:
\[
\iMor(x)= \sum \limits_{k=0}^{3} \Big[\iMor(F_{k,h}^{+})+\iMor(F_{k,h}^{-})\Big].
\]
Since the  figure-eight orbit is a  minimizer on the $\die{6}$-equivariant loop space of
the collision manifold, this in particular implies that $\iMor(F_{0,0}^{+})=0$.

With a slight abuse of notation we denote by  $\Z_2$, $\Z_3$
the group generated by $\widetilde{g}_1^3,
\widetilde{g}_1^2$, respectively (actually we are identifying the abstract cyclic groups $\Z_2$ and $\Z_3$
with their unitary representation on $E$) and we set $\iMorh{2}$ and  $\iMorh{3}$  be the Morse indices of
the figure eight orbit as the critical point of the action functional restricted on the spaces fixed by the
action of $\Z_2$ and  $\Z_3$.
Since  the $\Z_2$ fixed space is  $F_0\oplus F_2$,
and the  $\Z_3$ fixed space is $F_0\oplus F_3$, we get
\[
\iMorh{2}(x)=\iMor(F_0)+\iMor(F_2) \quad \textrm{ and } \quad   \iMorh{3}(x)=\iMor(F_0)+\iMor(F_3).
\]
By invoking \cite[Remark 5.12]{HS09} we already know that
  $\iMor(x)=2$, $\iMorh{2}=\iMorh{3}=0$. Thus, we get
  \[
  \iMor(F^\pm_{0,h})= \iMor(F^\pm_{3,h}) =\iMor(F^\pm_{2,h})=\iMor(E_3)=\iMor(E_2)=\iMor(E_4)=0 \qquad \textrm{ for }
  h=0,\cdots,5
     \]
and
    \[
    \iMor(F^\pm_{1,h})=\iMor(E_1)=\iMor(E_5)=1.
    \]
\paragraph{Proof of Theorem \ref{thm:main-3bp}.} The proof of this result readily follows by
summing up all the previous computations performed in Subsection \ref{subsec:special-decomp-2}. \qed

% % % % % % % % % % % % % % % % % % % % % % % % % % % % % % % % % % % % % % % % % % %
% %
% % %
% %
% %
% % % % % % % % % % % % % % % % % % % % % % % % % % % % % % % % % % % % % % % % % % %

\subsection{Hyperbolicity and strong instability of Lagrangian systems}\label{subsec:ultima}

The aim of this paragraph is to prove some instability results.
To do so we briefly recall the definition of {\em splitting numbers\/} and we fix our notations.

Let $L \in \mathscr C^2([0,T] \times \R^{2m}, \R)$  be a Lagrangian function satisfying the Legendre convexity condition and, as before, we
consider the Lagrangian action functional
\begin{equation}
\mathscr S_L:E_\R \to \R \textrm{ defined by } \mathscr S_L(x)=\int_0^T L\big(t, x(t), \dot x(t)\big)\, dt,
\end{equation}
where $E_\R\=W^{1,2}([0,T]; \R^m)$. We assume that $x$ is a critical point; by the second variation of $\mathscr S_L$,
we get the associated index form given by
\begin{equation}
\mathcal {I}_{\R}(\xi,\eta)=\int_0^{T}\Big[\langle(P\dot{\xi}+Q\xi),\dot{\eta}\rangle+\langle \trasp{Q}\dot{\xi},
\eta\rangle+\langle R\xi, \eta\rangle\Big]dt \qquad  \xi,\eta\in E.
\end{equation}
For any $\omega \in \U$, we let
\[
E^{1}(\omega)\=\Set{u \in E | u(0)=\omega\, u(T)}
\]
where we denoted by $E$  the complexification of $E_\R$; i.e. $E\=E_\R \otimes \C$. The corresponding $\omega$-index
form is given by
\begin{equation}\label{eq:index-form-complex}
\mathcal {I}_\omega(\xi,\eta)=\int_0^{T}\Big[\langle(P\dot{\xi}+Q\xi),\dot{\eta}\rangle+\langle \trasp{Q}\dot{\xi},
\eta\rangle+\langle R\xi,  \eta\rangle \Big]dt \qquad   \xi,\eta\in E
\end{equation}
where we have extended the Euclidean product  $\langle, \cdot,\cdot \rangle$ to the standard Hermitian product.
In what follows we denote by $\iMor(\omega,x)$ the Morse
index of $\mathcal I_\omega$ on $E^1(\omega)$.
\begin{defn}
The {\em splitting number of $x$ at $\omega \in \U$\/} is given by
\begin{equation}\label{eq:S-N}
\mathcal{S}^\pm(\omega, x)=\lim_{\theta\rightarrow\pm0}\Big[\iMor(\omega e^{\sqrt{-1}\theta},x)-\iMor(\omega,x)\Big].
\end{equation}
\end{defn}
Integrating by parts in Equation \eqref{eq:index-form-complex}, we get that linear Sturm system
\begin{equation}\label{eq:MS-system}
\mathcal A \xi\=-\dfrac{d}{d t} \big(P(t) \dot \xi + Q(t) \xi\big) + \trasp{Q}(t)\dot \xi + R(t) \xi=0 \qquad t \in [0,T]
\end{equation}
which reduces to the linear Hamiltonian system
\begin{equation}\label{eq:hamsys}
\dot z = J B(t) z, \qquad t \in [0,T]
\end{equation}
where
\[
B(t)\= \begin{bmatrix}
P^{-1}(t) & - P^{-1}(t) Q(t)\\
-\trasp{Q}(t)P^{-1}(t) & \trasp{Q}(t) P^{-1}(t)Q(t)-R(t)
\end{bmatrix}.
\]
Let $\gamma$ be the fundamental solution of the Hamiltonian system given in Equation \eqref{eq:hamsys} and
let $M_{T}\=\gamma(T)$ be the induced monodromy matrix.
\begin{lem}\label{rel:splitting-numbers}
Under the previous notation, we have
\[
\mathcal S^{\pm}(\omega,x)= S^{\pm}_{M_{T}}(\omega), \qquad \omega \in \U,
\]
where $S^{\pm}_{M_{T}}(\omega)$ denotes the splitting numbers defined in \cite[pag. 191, Equation (4)]{Lon02}.
\end{lem}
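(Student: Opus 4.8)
The plan is to reduce the identity to the $\omega$-twisted analogue of the Morse-type index theorem together with the very definition of Long's splitting numbers. The first step is to establish that, for every $\omega\in\U$,
\[
\iMor(\omega,x)=\iota_\omega(\gamma)+c_0,
\]
where $\iota_\omega(\gamma)$ is the $\omega$-Maslov-type index of the fundamental solution $\gamma$ of \eqref{eq:hamsys} and $c_0$ depends only on $m$ and \emph{not} on $\omega$. This is obtained exactly as in Proposition~\ref{thm:morse-index-theorem}, now for the complex $\omega$-boundary condition: the constraint $u(0)=\omega\,u(T)$, together with the natural boundary condition of $\mathcal I_\omega$, corresponds under the Legendre reduction $z=(P\dot x+Q x,\,x)$ to the subspace $L_\omega\=\Set{(w_1,w_2)|w_1=\omega\,w_2}\subset\C^{2m}\oplus\C^{2m}$, which is a Lagrangian subspace for the standard symplectic structure precisely because $|\omega|=1$; hence the complex analogue of \cite[Theorem~3.4]{HS09} gives $\iMor(\omega,x)+\iota(L_\omega)=\iCLM\big(L_\omega,\Graph(\gamma);[0,T]\big)$, and rewriting the right-hand side through the standard dictionary between the $\iCLM$-index and Long's Maslov-type index (cf.~\cite{Lon02}) yields the displayed formula.

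The delicate point --- and the main obstacle --- is the $\omega$-independence of the additive constant $c_0$. Several quantities jump as $\omega$ varies: $\iota(L_\omega)$ jumps at $\omega=1$, and so do $\iota_\omega(\gamma)$ (already because $\gamma(0)=\Id_{2m}$ has $1$ in its spectrum) and the $\iCLM$-versus-$\iota_\omega$ comparison; one must show that in the combination $\iMor(\omega,x)=\iCLM(L_\omega,\Graph(\gamma);[0,T])-\iota(L_\omega)$ these all cancel, so that $\iMor(\omega,x)-\iota_\omega(\gamma)$ is genuinely constant on $\U$. The term $\iota(L_\omega)$ itself is easy to pin down: by Proposition~\ref{prop1.12} with $B=C$, $C=\diag(\Id_m,-\Id_m)$, it equals $\iCLM\big(L_\omega,\Graph(\exp(tJC));[0,T]\big)$, and since $\sigma\big(\exp(tJC)\big)=\{e^{t},e^{-t}\}$ is disjoint from $\U$ for $t\neq0$ and equals $\{1\}$ at $t=0$, this path meets the Maslov cycle of $L_\omega$ only when $\omega=1$ --- a single $t=0$ endpoint crossing whose crossing form $\langle C\,\cdot,\cdot\rangle$ has signature $(m,m)$ --- so that $\iota(L_\omega)=0$ for $\omega\neq1$ and $\iota(L_1)=m$, in agreement with the Remark after Proposition~\ref{thm:morse-index-theorem}. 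What remains is a crossing-form comparison at $\omega=1$, matching this jump against the $t=0$ endpoint crossing of $t\mapsto\Graph(\gamma(t))$ and the initial-point normalization of Long's index; away from $\omega=1$ the correction is constant by the same dictionary and because $\iota(L_\omega)\equiv 0$ there.

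Granting the first step, the conclusion is immediate. By \cite[p.~191, Eq.~(4)]{Lon02} the splitting numbers of $M_T$ are by definition the one-sided jumps $S^{\pm}_{M_T}(\omega)=\lim_{\theta\to\pm0}\big[\iota_{\omega e^{\sqrt{-1}\theta}}(\gamma)-\iota_\omega(\gamma)\big]$, the limit being independent of the chosen symplectic path from $\Id_{2m}$ to $M_T$; we take our $\gamma$, for which $\gamma(T)=M_T$. Therefore
\begin{multline*}
\mathcal S^{\pm}(\omega,x)=\lim_{\theta\to\pm0}\Big[\iMor(\omega e^{\sqrt{-1}\theta},x)-\iMor(\omega,x)\Big]\\
=\lim_{\theta\to\pm0}\Big[\iota_{\omega e^{\sqrt{-1}\theta}}(\gamma)+c_0-\iota_\omega(\gamma)-c_0\Big]=S^{\pm}_{M_T}(\omega),
\end{multline*}
which is the asserted equality. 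The heart of the matter is thus the $\omega$-uniform Morse-type index theorem of the first step; the rest is bookkeeping with crossing forms.
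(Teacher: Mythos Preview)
The paper does not actually give a proof of this lemma: it merely cites \cite[p.~252, Corollary~4]{Lon02} and \cite{HPY17-geo}. Your proposal therefore does more than the paper itself, and what you outline is precisely the standard route behind those citations: establish $\iMor(\omega,x)=\iota_\omega(\gamma)+c_0$ with $c_0$ independent of $\omega$, then subtract and pass to the one-sided limits, which by definition are Long's splitting numbers.

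Your diagnosis of the only genuine obstacle is accurate: the individual pieces $\iota(L_\omega)$, $\iCLM(L_\omega,\Graph(\gamma);[0,T])$, and $\iota_\omega(\gamma)$ all jump at $\omega=1$, and one must check that the jumps cancel so that the difference is constant on $\U$. Your computation of $\iota(L_\omega)$ (zero for $\omega\neq 1$, equal to $m$ for $\omega=1$) is correct, and this $m$ is exactly matched by the $t=0$ endpoint contribution of $\Graph(\gamma(t))$ to $\iCLM$ when $\omega=1$ together with the normalization of $\iota_1$; this is the content of \cite[p.~172, Theorem~4]{Lon02}, which the paper in fact invokes later and which gives $\iMor(\omega,x)=\iota_\omega(\gamma)$ outright (so $c_0=0$). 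The part you leave as ``what remains is a crossing-form comparison at $\omega=1$'' is thus exactly what the cited reference supplies; if you want a self-contained proof you should write out that endpoint crossing computation explicitly, but the strategy is correct and nothing is missing conceptually.
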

\begin{proof}
For the proof of this result we refer the interested reader to \cite[pag.252, Corollary 4]{Lon02} or \cite{HPY17-geo} and references therein. This conclude the proof.
\end{proof}
\begin{defn}
The Morse-Sturm system given in Equation \eqref{eq:MS-system}, is termed {\em index hyperbolic\/} if
\begin{equation}
\Big( S_{M_{T}}^+(e^{i2\pi \theta}), S_{M_{T}}^-(e^{i2\pi \theta})\Big)=
\begin{cases}
(0,0) & \textrm{ if } \theta \in \Q\\
(p,p) & \textrm{ if } \theta \notin \Q
\end{cases}
\end{equation}
\end{defn}
It is worth to observe that $\mathcal A$ is degenerate on
\begin{equation}
E^{2}(\omega)\=\Set{u \in W^{2,2}([0,T], \C^{m})|u(0)=\omega u(T) \textrm{ and } \dot u(0)=\omega \dot u(T)}
\end{equation}
if and only if $\omega$ is an eigenvalue of $M_{T}$; i.e. in symbols $\omega \in \mathfrak{sp}\big(M_{T}\big)$.

The next result gives a sufficient condition in terms of the linearized operator $\mathcal A$ in order that a $\die{n}$-invariant
critical point  is index hyperbolic.
\begin{thm}\label{thm:1-ultimasezione}
Let $S, N \in \OO(m)$ be such that $S^{n}= \Id_{m}$ and $N^{2}=(NS)^{2}=\Id_{m}$ acting on $E_\R$ dihedrally through  the
action given by
\begin{multline}\label{eq:action-2-intro}
\mathcal S: E \ni  u\longmapsto  (\mathcal S\, u)(\cdot) \=S\, u\left(\cdot+ T/n\right)\in E_\R
\textrm { and }\\
\mathcal N: E \ni  u\longmapsto  (\mathcal N\, z)(\cdot) \=N\, u\left(T/n-t\right)\in E_\R.
\end{multline}
Under the above notation,  if the restriction of $\mathcal A$ given in Equation \eqref{eq:MS-system-intro} on
\begin{equation}
E^2\left(\dfrac{T}{2n}\right)\=\Set{u \in W^{2,2}\left(\left[0,\dfrac{T}{2n}\right], \C^{m}\right)| \dot{u}(0)=\dot{u}\left(\dfrac{T}{2n}\right)=0}
\end{equation}
is positive semi-definite, then $x$ is index hyperbolic.
\end{thm}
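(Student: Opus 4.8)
The plan is to recast the statement as a property of the monodromy matrix $M_T$ and then to use the two reflection symmetries of the system in order to reduce everything to the fundamental solution $\gamma$ over the fundamental domain $[0,T/(2n)]$ of the $\die n$-action on the time circle. First I would invoke Lemma~\ref{rel:splitting-numbers} to identify $\mathcal S^\pm(\omega,x)$ with the symplectic splitting numbers $S^\pm_{M_T}(\omega)$, so that index hyperbolicity of $x$ becomes the purely symplectic assertion that $S^\pm_{M_T}(\omega)=0$ whenever $\omega$ is a root of unity and $S^+_{M_T}(\omega)=S^-_{M_T}(\omega)$ otherwise.

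Next I would make the symmetry explicit at the level of the linearized flow. The $\die n$-invariance of $\mathscr S_L$ — equivalently the symmetry relations on $P,Q,R$ of the type \eqref{eq:SP-intro} — forces the coefficient matrix $B(t)$ of the reduced Hamiltonian system \eqref{eq:hamsys} to satisfy $B(t)S_d=S_dB(t+T/n)$ and $B(t)N_d=N_dB(T/n-t)$, where $S_d\=\diag(S,S)$ is symplectic and $N_d\=\diag(-N,N)$ is an anti-symplectic involution. A uniqueness argument for $\dot\gamma=JB\gamma$ then gives
\[
\gamma(t+T/n)=S_d^{-1}\gamma(t)\,S_d\,\gamma(T/n),\qquad \gamma(T/n-t)=N_d\,\gamma(t)\,N_d\,\gamma(T/n).
\]
Iterating the first identity $n$ times (and using $S_d^n=\Id$) yields $M_T=\bigl(S_d\gamma(T/n)\bigr)^{n}$, while specializing the second at $t=T/(2n)$ rewrites the twisted $(T/n)$-monodromy as $A\=S_d\gamma(T/n)=W\cdot\bigl(\gamma(T/(2n))^{-1}N_d\,\gamma(T/(2n))\bigr)$ with $W\=N_dS_d^{-1}$. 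The hypothesis $(NS)^2=\Id_m$ is exactly what makes $W$ an anti-symplectic involution, so $A$ is a symplectic matrix written as a product of two anti-symplectic involutions — a ``generalized brake'' matrix built entirely from $\gamma|_{[0,T/(2n)]}$ — and $M_T=A^{n}$. By the classical Bott iteration formula for the Maslov-type index and the splitting numbers (cf.\ \cite{Lon02}) one gets $S^\pm_{M_T}(\omega)=\sum_{\eta^{n}=\omega}S^\pm_{A}(\eta)$; since a number is a root of unity exactly when each of its $n$-th roots is, it suffices to prove that $A$ is itself index hyperbolic.

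The hypothesis on $\mathcal A$ enters at this step. The splitting numbers of the brake-type matrix $A$ are governed — via the $L$-index and brake-orbit index theory of \cite{LZ14a,LZ14b,LZZ06,LT15}, together with the equivariant spectral-flow decomposition of Proposition~\ref{thm:cor2.7} — by one-sided limits of the Morse indices of the $\eta$-perturbed boundary value problems for $\mathcal A$ on $[0,T/(2n)]$, which at $\eta=1$ are the reflection-twisted problems on the subspaces $F^\pm_{k,h}$ of the dihedral Bott formula (Theorem~\ref{thm:Bott-diedrale-Lagrangiano-intro}), with boundary data the Lagrangian subspaces $V_\pm(\cdot)$ of Lemma~\ref{thm:eigenspaces-Lagrangians}. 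Since the Neumann condition $\dot u(0)=\dot u(T/(2n))=0$ is the natural boundary condition for the free variation of $\mathscr S_L$ on $[0,T/(2n)]$, the $W^{1,2}$-completion of $E^2(T/(2n))$ is the whole space $W^{1,2}([0,T/(2n)],\C^m)$ and every reflection-twisted subspace embeds into it; positive semi-definiteness of $\mathcal A$ on $E^2(T/(2n))$ therefore propagates to all of these half-domain problems. A crossing analysis of the Maslov path $t\mapsto\gamma(t)$ on $[0,T/(2n)]$ then shows that no crossing occurs over a root-of-unity value of $\eta$, so $S^\pm_A(\eta)=0$ there, while $S^+_A(\eta)=S^-_A(\eta)$ at the remaining points of $\U$ is forced by the reversibility of $A$ (reality together with the conjugacy $A\sim A^{-1}$ via the anti-symplectic involution $W$). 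By the Bott identity above this transfers to $M_T$. If $\mathcal A$ is moreover positive definite the half-domain problems become non-degenerate, the Maslov path has no crossing at all, and $M_T$ has no eigenvalue on $\U$, giving the genuine hyperbolicity of Corollary~\ref{thm:2-ultimasezione-intro} and, as a special case, Offin's theorem (Corollary~\ref{thm:Offin}).

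The hard part will be this last step: pinning down the correspondence between the algebraic splitting numbers of the generalized-brake matrix $A=S_d\gamma(T/n)$ and one-sided limits of the analytic half-domain Morse indices, and keeping track both of the relative-index ($\iota(L)$-type) corrections and — above all — of the degenerate directions that survive when $\mathcal A$ is only semi-definite; it is precisely this residual nullity that separates the conclusion ``index hyperbolic'' here from the hyperbolicity available under the definite hypothesis. Organizing this amounts to tracking the four reflection-twisted boundary conditions at $t=0$ and $t=T/(2n)$ through the isotypical decomposition of Section~\ref{sec:reptheory} and the Bott formulas of Section~\ref{sec:special-decomp}.
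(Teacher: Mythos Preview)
Your plan contains the right key insight --- that positive semi-definiteness of $\mathcal A$ under Neumann conditions on $[0,T/(2n)]$ is the same as positive semi-definiteness of the index form $\mathcal I$ on all of $W^{1,2}([0,T/(2n)],\C^m)$, and hence on every closed subspace of it --- but you then bury that insight under a layer of symplectic machinery (the factorization $M_T=A^n$, brake-orbit $L$-index theory, a crossing analysis for root-of-unity versus irrational $\eta$, a reversibility argument for $S^+_A=S^-_A$) that the paper never needs. The paper's proof is a pure positivity-propagation argument and shows the stronger fact that $\iMor(\omega,x)\equiv 0$ for \emph{all} $\omega\in\U$, so $S^\pm_{M_T}(\omega)=0$ everywhere and the index-hyperbolicity condition holds with $p=0$.

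Concretely, the paper uses the two generators of $\die n$ separately and only at the level of function spaces. First the reflection $\mathcal N$: the space $E^1(T/n)=W^{1,2}([0,T/n],\C^m)$ splits $\mathcal I$-orthogonally into the $\pm1$ eigenspaces $E^1_\pm(T/(2n))$ of $\mathcal N$, each of which, by restriction to $[0,T/(2n)]$, sits inside $E^1(T/(2n))$. Hence $\mathcal I\geq0$ on $E^1(T/n)$, and in particular on every $\eta$-twisted subspace $E^1(T/n,\eta)=\{u:u(0)=\eta\,u(T/n)\}$. Second the cyclic shift $\mathcal S$: the classical Bott decomposition gives $E^2(T,\omega)=\bigoplus_{\eta^n=\omega}E^2(T/n,\eta)$, so $\mathcal I\geq0$ on $E^1(T,\omega)$ for every $\omega\in\U$. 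That is already $\iMor(\omega,x)=0$, and the splitting numbers, being jumps of the function $\omega\mapsto\iMor(\omega,x)$, vanish identically.

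Two specific points in your write-up are either unnecessary or confused. Your sentence ``no crossing occurs over a root-of-unity value of $\eta$'' mixes the $t$-variable (where crossings live) with the boundary parameter $\eta$; what positivity actually gives is $\iMor(\eta)=0$ for \emph{every} $\eta$, which is both easier and stronger, and makes the separate treatment of irrational $\eta$ via reversibility of $A$ superfluous. And the ``hard part'' you flag --- matching algebraic splitting numbers of $A$ with analytic half-domain Morse indices, tracking $\iota(L)$-corrections and residual nullities --- simply never appears once you observe that the $\omega$-Morse index is constantly zero. The same short-circuit handles Corollary~\ref{thm:2-ultimasezione-intro}: positive definiteness on $E^2(T/(2n))$ propagates to non-degeneracy of $\mathcal A$ on every $E^2(T,\omega)$, so $M_T$ has no eigenvalue on $\U$.
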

\begin{proof}
We start to observe that by the group action $(\mathcal N u)(t)=Nu(T/n-t)$ on $E^2(T/n)$,  we have the following decomposition
\[
E^2\left(\dfrac{T}{n}\right)=E^2_+\left(\dfrac{T}{2n}\right)\oplus E^2_-\left(\dfrac{T}{2n}\right)
\]
where
\begin{multline}
E^2_+\left(\dfrac{T}{2n}\right)\= \Set{u \in W^{2,2}\left(\left[0,\dfrac{T}{2n}\right], \C^{m}\right)|
u\left(\dfrac{T}{2n}\right) \in V_+(N),\ \dot{u}(0)=0,\ \dot{u}\left(\dfrac{T}{2n}\right) \in V_-(N)}\\
E^2_-\left(\dfrac{T}{2n}\right)\= \Set{u \in W^{2,2}\left(\left[0,\dfrac{T}{2n}\right], \C^{m}\right)|
u\left(\dfrac{T}{2n}\right) \in V_-(N),\ \dot{u}(0)=0,\ \dot{u}\left(\dfrac{T}{2n}\right) \in V_+(N)}.
\end{multline}
Since the spectrum of $\mathcal N$ is $\{+1,-1\}$, if $u \in V_{1}(\mathcal N)$, then we have
\begin{multline}
(\mathcal N u)(t)= N u \left(\dfrac{T}{n}-t \right)=u(t) \Rightarrow Nu \left(\dfrac{T}{n}\right) =
u(0) \Rightarrow u(0)\in \C^m \textrm{ and }\\
(\mathcal N u)\left(\dfrac{T}{2n}\right)= N u \left(\dfrac{T}{n}-\dfrac{T}{2n}\right)=
u\left(\dfrac{T}{2n}\right) \Rightarrow u \left(\dfrac{T}{2n}\right) \in V_+(N).
\end{multline}
Analogously, we have
\begin{multline}
(\mathcal N u)(t)= N u \left(\dfrac{T}{n}-t \right)=u(t) \Rightarrow -N\dot u \left(\dfrac{T}{n}-t\right) =
\dot u(t) \Rightarrow  -N\dot u \left(\dfrac{T}{n}\right) =
\dot u(0) \Rightarrow \dot u(0)=0\\
-N\dot u \left(\dfrac{T}{2n}\right) =
\dot u\left(\dfrac{T}{2n}\right)  \Rightarrow \dot u \left(\dfrac{T}{2n}\right) \in V_-(N).
\end{multline}
By this calculation, we get the definition of $ E^2_+\left(\dfrac{T}{2n}\right)$ in which we omit the
trivial condition $u(0)\in \C^m$. If  $u \in V_{-1}(\mathcal N)$, by the very same computation we
get  the subspace  $ E^2_-\left(\dfrac{T}{2n}\right)$.

Since $\mathcal A$ is positive semi-definite 
on $E^2\left(\dfrac{T}{2n}\right)$, then the index form $\mathcal{I}$ is positive semi-definite  on 
$E^1\left(\dfrac{T}{2n}\right)=W^{1,2}\left(\left [0,\left(\dfrac{T}{2n}\right)\right],\C^m\right)$. Clearly, we have
 \begin{equation}
E^{1}_{\pm}\left(\dfrac{T}{2n}\right)\=\Set{u \in W^{1,2}\left(\left [0,\left(\dfrac{T}{n}\right)\right],
\C^m\right)| u\left(\dfrac{T}{2n}\right)\in V_{\pm}(N)}\subset E^1\left(\dfrac{T}{2n}\right)
\end{equation}
and we have the decomposition
 \begin{equation}
E^{1}\left(\dfrac{T}{n}\right)\=W^{1,2}\left(\left [0,\left(\dfrac{T}{n}\right)\right],\C^m\right)=E^1_+\left(\dfrac{T}{2n}\right)\oplus E^1_-\left(\dfrac{T}{2n}\right),\end{equation}
then the index form $\mathcal{I}$ is positive semi-definite on $E^1(T/n)$. We let
\begin{equation}
E^{1}\left(\dfrac{T}{n},\omega\right)\=\Set{u \in W^{1,2}\left(\left [0,\left(\dfrac{T}{n}\right)\right],\C^m\right)|u(0)=\omega u(T/n)}\subset E^1\left(\dfrac{T}{n}\right),
\end{equation}
and we observe that, precisely as before, the index form $\mathcal{I}$ is positive semi-definite 
 on $E^{1}\left(\dfrac{T}{n},\omega\right)$ for every $\omega \in \U$. Equivalently, $\mathcal A$ is 
 positie semi-definite on $E^{2}\left(\dfrac{T}{2n},\omega\right)$ for every $\omega \in \U$.

We also observe that the (closed) subspace $E^{1}\left(\dfrac{T}{n},\omega\right)$ can be decomposed as follows
\[
E^{2}(T, \omega)= \bigoplus_{z^m=\omega} E^{2}\left(\dfrac{T}{n},z\right)
\]
and by the same arguments as before, we infer that $\mathcal A$ is semi-positive on $E^{2}(T, \omega)$ for every $\omega \in \U$.
This implies that $\iMor(\omega) \equiv 0$ for every $\omega \in \U$. By invoking
\cite[pag. 172, Theorem 4]{Lon02} it holds that
\[
\iMor(\omega)=i_\omega\big(\gamma(t); t \in [0,T]\big)
\]
and by this equality we infer that $i_\omega\big(\gamma(t); t \in [0,T])\equiv 0$ for every $\omega \in \U$. By the definition of
splitting numbers, it readily follows that
\[
\Big(S^+_{M_T}, S^-_{M_T}\Big)=(0,0)\qquad \textrm{ for every } \omega \in \U.
\]
This conclude the proof.
\end{proof}
\paragraph{Proof of Theorem  \ref{thm:1-ultimasezione-intro} .}
The proof of this result follows directly from Theorem \ref{thm:1-ultimasezione} by invoking \cite[pag.11, Theorem 10]{Eke90}.
(Cf. \cite{HPY17-geo} for further details). This conclude the proof. \qed
\paragraph{Proof of Corollary  \ref{thm:2-ultimasezione-intro} .}
We start to observe that $\omega \in \U$ is an eigenvalue of $M_T$ if and only if $\mathcal A$ is degenerate on $E^2(T,\omega)$. Arguing
precisely as in the proof of Theorem \ref{thm:1-ultimasezione} we infer that since $\mathcal A$ is positive definite on $E^2(T,\omega)$ for every
$\omega \in \U$ then $M_T$ has no eigenvalues on $\U$. Otherwise stated $M_T$ is hyperbolic. This conclude the proof.\qed
\paragraph{Proof of Corollary  \ref{thm:Offin} .}
Since $\mathcal L$ is reversible and $Q$ is $T$-periodic by assumption, we infer that
\[
Q(t)=Q(T-t) 	\qquad t \in [0,T].
\]
Now, let us consider the index form
\[
\mathcal I: E \to \R \textrm{ defined by }
\mathcal I(u,v)= \int_0^T \langle \dot u, \dot v \rangle + \langle Q(t) u, v \rangle \, dt
\]
where $E\=\Set{u \in W^{1,2}\big([0,T], \R^m\big)| u(0)=u(T)}$.
As already observed $u \in \ker \mathcal I$ if and only if $u$ is a $T$-periodic solution of $\mathcal Lu=0$. Now, let us consider the $\Z_2$-action on $E$ defined by
\[
g:E \to E \textrm{ defined by } (g u)(t)\= u(T-t) \qquad t \in [0,T]
\]
(corresponding to consider $N=\Id_m$ in Theorem \ref{thm:1-ultimasezione}). Thus we have
\begin{multline}
E^2_+(T/2)\=\Set{u \in W^{2,2}([0,T/2], \R^m)| \dot{u}(0) =\dot{u}(T/2)=0},\\
E^2_-(T/2)=\Set{u \in W^{2,2}([0,T/2], \R^m)|\ u(T/2)=0,\ \dot{u}(0)=0}.
\end{multline}
By assumption that the Morse index  is $0$   and there are no symmetric $T$-periodic solutions; thus $\mathcal L$ is non-degenerate, too. Then it
readily follows that $\mathcal L$ is positive definite on $E^2_+(T/2)$. The thesis readily follows by invoking Corollary \ref{thm:2-ultimasezione-intro}. This  conclude the proof.\qed

\appendix

\section{The Cyclic and Dihedral group Algebras}\label{subsec:group-algebras}
Let $G$ be a finite group with $n$ elements, namely $\ord(G)=n$. We denote by
$\C[G]$ the
{\em group algebra of $G$ over the complex field\/}, namely the set of all
formal complex linear
combinations of element of $G$ with coefficients in the complex field. It is
well-know that $\C[G]$ is
a $\C$-vector space of dimension $\ord(G)$ and it is a non-commutative algebra
unless the group
is commutative. In what follows we are interested in $\C[G]$-modules. An
important example of $\C[G]$-module
is $\C[G]$ itself viewed as a $\C[G]$-module and it is termed {\em regular
$\C[G]$-module\/}.  As a direct
consequence of the Maschke's Theorem if $V$ is a  $\C[G]$-module then it is
semisimple; thus
there exist simple $\C[G]$-modules $U_1, \dots ,U_k$ such that
\[
 V= U_1 \oplus \dots \oplus U_k.
\]
In particular $\C[G]$  is semisimple. Moreover by the Schur's Lemma, if $V, W$
are simple $\C[G]$-modules either
$\phi:V \to W$ is a $\C[G]$-isomorphism or $\phi$ is the trivial homomorphism.
In the first case, $\phi$ is a
scalar multiple of the identity map $\Id_V$. As direct consequence of Schur's
Lemma, if $G$ is a
finite abelian group, every simple $\C[G]$-module $V$ is of length one. For, let
$x,g \in G$ and $v \in  V$;
thus we have
\[
 x  g\, v = g \, (xv) \qquad g, x \in G, \ \ v \in V.
\]
Therefore, $V \ni v \mapsto x\, v \in V$ is a $\C[G]$-homomorphism. Since $V$ is
 irreducible (being simple), Schur's
Lemma implies that there exists $\lambda_x \in \C$ such that $xv= \lambda_x v$,
for all $v \in V$. In particular,
this implies that every subspace of $V$ is a $\C[G]$-module and since $V$ is
simple, $\dim V=1$. (Actually, also the
converse is true). As a direct consequence of the Wedderburn-Artin
Theorem, we have that $\C[G]$ is isomorphic to the direct sum of matrix algebras
over division rings (since we are
working over an algebraic closed field, it turns out that each division rings is
actually isomorphic
to $\C$), there exists an isomorphism
\[
 \Phi: \C[G] \longrightarrow \Mat(n_1; \C) \oplus \dots \oplus \Mat( n_k; \C),
\]
such that $\ord(G)=\sum_{j=1}^k n_j^2$.
\begin{ex}\label{ex:ciclico}({\bf The cyclic group\/})
 Let $\cic{n}\=\langle r| r^n=1\rangle$ be the cyclic group of order $n$.  It
can be realized as the group
of rotations through angles $2k\pi/n$ around an axis.
Denoting by $\C[z]$ the ring of complex
polynomials, it readily follows that the group algebra $\C[\cic{n}]$ is
isomorphic to the quotient of $\C[z]$ by the
ideal generated by the $n$-th cyclotomic polynomial $z^n-1$; i.e.
\[
 \C[\cic{n}]\cong \C[z]/(z^n-1).
\]
Since we may factor the polynomial $z^n-1$ over the complex field as
$z^n-1=\prod_{k=0}^{n-1} (z- \zeta_n^k)$, where $\zeta_n$ denotes a $n$-th root
of unit,
the group algebra splits as a product of $n$ copies of $\C$ (exactly the
Wedderburn-Artin decomposition), e.g.
\[
 \C[\cic{n}]= \prod_{k=0}^{n-1} \C[z]/(z-\zeta_n^k).
\]
Thus $\C[\cic{n}]$ decomposes as the direct sum of $n$ one-dimensional (complex)
subspaces, i.e.
\[\C[\cic{n}]= \bigoplus_{0 \leq k <n} L_k\]where $\dim_\C L_j=1$ for each $j=1,
\dots,n$.
The generator $t$ acts, in the corresponding factor, as multiplication by
$\zeta_n^k$, meaning that $L_k$ is a
$\cic{n}$-module with respect to the following action $ \cic{n} \times L_k \to
L_k: (t, w)\mapsto \zeta_n^k\, w$ and
the group algebra $\C[\cic{n}]$ splits into the sum of $n$,
$\cic{n}$-modules.
\end{ex}
\begin{rem}
It is worth noticing that, by Example \ref{ex:ciclico} and by invoking the
structure
theorem of finite abelian groups, it
readily follows the group algebra's decomposition of any abelian group.
\end{rem}
\begin{ex}\label{ex:diedrale}({\bf The Dihedral group\/})
Let  $\die{n}$ be the {\em dihedral  group\/} of degree $n$ and
order $2n$. For $n \geq 3$, $\die{n}$ is  the
group of symmetries of a regular $n$-gon in the plane, namely, the group of all
the
plane symmetries that preserves a regular $n$-gon. It contains $n$ rotations,
which form a
subgroup isomorphic to $\cic{n}$ and $n$ reflections.
From an algebraic viewpoint it is
a metabelian  group having the cyclic normal subgroup $\cic{n}$ of index $2$ and
the
following presentation
\begin{equation}\label{eq:diedrale}
 \die{n}\= \langle r,s|r^n=s^2=1,\  srs=r^{-1}\rangle.
\end{equation}
In conclusion $\die{n}$ is a semi-direct product of $\cic{n}$ and $\cic{2}$; in
symbols
$\die{n}= \cic{n} \rtimes \cic{2}$,
where $\cic{n}\=\langle r| r^n=e\rangle$ and $\cic{2}\=\langle s| s^2=e\rangle$.
\footnote{%
We observe that
$\die{1}$ and $\die{2}$ are atypical groups and more precisely $\die{1}$ is the
cyclic group of order $2$ whilst $\die{2}$ is the {\em Klein four-group\/}. It
is worth
noticing that for $n \geq 3$, $\die{n}$ is non-abelian.}
 Each element of $\die{n}$ can  be uniquely written, either in the form $r^k$,
with $0 \leq k \leq n-1$ (if it belongs to $\cic{n}$), or in the form $sr^k$,
with $0 \leq k \leq n-1$  (if
it doesn't belong to $\cic{n}$). Observe that the relation $srs=r^{-1}$ implies
that $sr^ks=r^{-k}$ and
$(sr^k)^2=1$.
By the Wedderburn-Artin  decomposition Theorem, the complex dihedral group
algebra $\C[\die{n}]$ splits as a product of complex matrices as follows
\[
 \C[\die{n}]\cong\bigoplus_{d|n} M_d
\]
where $M_d \cong \C\oplus \C$ if $d=1,2$ and $M_d\cong
\Mat(2; \C)$ if $d >2$.
Denoting by $L_k$ the one-dimensional complex subspace defined in Example
\ref{ex:ciclico}, we
set $\widetilde L_k\=L_k \oplus L_{-k}$ and we identify $L_k$ with the
horizontal subspace $L_k \times \{0\} \subset
\widetilde L_k$ and $L_{-k}$ with the vertical, namely with $\{0\} \times L_{-k}
\subset \widetilde L_k$.  With the action $\phi:\die{n} \times
\widetilde L_k \to \widetilde L_k$ defined by
\begin{equation}
\phi\big(r,(x,y)\big)= \big(\zeta_n\, x, \zeta_n^{-1}\, y\big),
              \qquad
\phi\big(s,(x,y)\big)= (y,x)
\end{equation}
$\widetilde L_k$ is a  $\die{n}$-module.
As long as $k \not \equiv-k$ mod $n$, the module $\widetilde L_k$ is
irreducible. On the
contrary, if $n=2k$, then $\widetilde L_k= L_k \oplus L_k$ decomposes.
Indeed, let $v_+\equiv(1,1)$
and $v_-\equiv(1,-1)$, then the
subspaces $L_+\=\C v_+$ and $L_-\= \C v_-$ are invariant, so $\widetilde L_k$
decomposes as $L_+ \oplus L_-$. On
$L_+$, the element $s$ acts as the identity and $r$ as the
multiplication by $-1$, and on $L_{-}$ both
act as multiplication by $-1$.
Thus the group algebra decomposes in a direct sum
of the $\die{n}$-modules as follows
\[
 \C[\die{n}]= \bigoplus_{k=1}^{n_*}2\, \widetilde L_k\oplus \widetilde L_0
\oplus \delta \,
 \widetilde L_{n/2}
\]
where $\delta \=0$ if $n$ is odd and $1$ if $n$ is even.
\end{ex}
Let $E$ be a separable complex Hilbert space and let $\Id_E$ be denote the
identity operator in $E$.
Given the unitary operator $\mathcal R \in \U(E)$, let $\mathcal C_n$ be the
finite
subgroup defined by $\mathcal C_n\= \langle \mathcal R \in \U(E)| \mathcal R^n =
\Id_E\rangle\subset \U(E).$
(Thus $\mathcal C_n$ is isomorphic to the cyclic group $\cic{n}$ and it
is nothing but a unitary representation of the cyclic group $\cic{n}$).
By using the spectral mapping theorem it readily follows that the spectrum of
$\mathcal R$ is
respectively given by $\sigma(\mathcal R)=\Set{\zeta_n^k\in \C|k=0, \dots,
n-1}.$
Let $E_k\=\ker(\mathcal R-\zeta_n^k\Id_E)$ be the eigenspace corresponding to
the eigenvalue $\zeta_n^k$. By the spectral theory of  normal operators, it is
well-known that
$E_k$ are mutually orthogonal and, clearly $\cic{n}$ invariant, the action
being
given the multiplication by $\zeta_n^k$
\[
 \cic{n} \times E_k \to E_k:(\zeta_n^{k}, v)\mapsto \mathcal
R\,v=\zeta_n^{k}\,v.
\]
Thus, we get a orthogonal decomposition of the Hilbert space $E$ into
$\cic{n}$-closed stable subspaces as follows
\begin{equation}\label{eq:decomposition}
 E=E_1 \oplus \dots \oplus E_n.
\end{equation}
\begin{rem}
We observe  that the  decomposition
given in Equation \eqref{eq:decomposition} is the isotypic decomposition induced
by the unitary
representation of the cyclic group. In particular each subspace $E_k$ is
given by the direct sum of (infinitely many) one-dimensional irreducible
representations
described in Example  \ref{ex:ciclico}.
\end{rem}
We denote by $\mathcal D_n$ the finite subgroup (actually  a unitary
representation of the dihedral group
$\die{n}$) of the unitary group of $E$, presented by
\begin{equation}\label{eq:image-diedrale-unitaria-app}
\mathcal D_n \= \langle\mathcal R, \mathcal S \in \U(E)| \mathcal R^n=
\mathcal S^2=(\mathcal S \mathcal R)^2= \Id_E\rangle\subset \U(E).
\end{equation}
For $k=0, \dots, n-1$, we define the closed linear subspaces
$E_{k}\=\ker(\mathcal R-\zeta_n^k\Id_E)$ and
$ E_{-k}\=\ker(\mathcal R-\zeta_n^{-k}\Id_E)$
and for $k=1,\dots,n_*$, we let
\begin{equation}\label{eq:F_k-app}
F_k\=E_k \oplus E_{-k}.
\end{equation}
We observe that  $F_k$ defined in Equation \eqref{eq:F_k-app} is a
$\die{n}$-module with the action given by
\[
 \cic{n} \times F_k \to F_k:\big(\zeta_n^k,v\big)\longmapsto
 \mathcal R\, v=\begin{bmatrix}
\zeta_n^{k}\, \Id_E&0\\0 & \zeta_n^{-k}\,\Id_E
\end{bmatrix}\,v\quad \textrm { and }
\]
\[
 \cic{2} \times F_k \to F_k:\big(s,v\big)\longmapsto
 \mathcal S\, v=\begin{bmatrix}
0&\Id_E\\ \Id_E &0
\end{bmatrix}\,v.
\]
Thus, we get a decomposition of the Hilbert space $E$ into mutually orthogonal
$\die{n}$-stable modules, given by
\begin{equation}\label{eq:decomposition-3}
% % \begin{split}
 E=\bigoplus_{k=0}^{\bar n}
            F_k.
\end{equation}
% % where $n_*\=(n-1)/2$ if $n$ is odd and $n_*\=n/2-1$ if $n$ is even.
\begin{rem}
We observe that the decomposition given in Equation
\eqref{eq:decomposition-3}, is the isotypic decomposition of $E$ with
respect to the irreducible representations of $\die{n}$.
\end{rem}

\section{Maslov index, Spectral Flow and Index Theorems}
\label{sec:Maslov+sf}

The goal of this Section is to briefly recall the Definitions and the main
Properties of the {\em Maslov index\/}
(and its friends) and the Spectral flow. In order to make the presentation as
smooth as possible, we
split this Section into two different Subsections. In the first Subsection
\ref{subsec:Maslov} we start to
recalling  the differentiable structure of the Lagrangian Grassmannian of a real
and complex symplectic space as well as
some well-known facts about the {\em (relative) $L_0$-Maslov index\/} or
more generally the {\em Maslov index for paths  of Lagrangian subspaces\/} both
on real
and complex symplectic spaces.  In order to make the Notations and Definitions
as  uniform as possible we start by
recalling the differentiable  structure of the Lagrangian Grassmannian
 which plays a crucial role
in the description of the
Maslov index through an intersection theory.
Our basic references for the material
contained in the first Subsection are \cite{Arn67, RS93, CLM94, LZ00b, GPP03,
HS09} and references therein.
In Subsection \ref{subsec:spectral-flow}, starting on  some useful functional
analytic preliminaries and
we continue to provide the construction as well as useful properties of
the spectral flow for paths of closed selfadjoint Fredholm operators which are
continuous
in the gap topology, which we'll need in the later Sections. Our basic
references are
\cite{AS69, APS76, RS95, BLP05, Les05, LZ00a, HP17}.

% % % % % % % % % % % % % % % % % % % % % % % % % % % % % % % % % % % % % % % %
% % % % % % % %
% %
% %
% %
% % % % % % % % % % % % % % % % % % % % % % % % % % % % % % % % % % % % % % % %
% % % % % % % %

\subsection{The geometry of the Lagrangian Grassmannian and the Maslov
index}\label{subsec:Maslov}

Let $(V,\omega)$ be (finite)  $2m$-dimensional real {\em symplectic vector
space\/}, $\Lin(V)$ be denote
the vector space of all bounded and linear  operators of $V$ and let $J \in
\Lin(V)$ be a
complex structure  {\em compatible\/} with the symplectic form $\omega$, meaning
that  $\omega(\cdot, J\cdot)$
is an inner product on $V$. Let us consider the  {\em Lagrangian Grassmannian of
$(V,\omega)$\/}, namely the set
$\Lagr(V, \omega)$ and we recall that it is a real compact and connected
analytic embedded $m(m+1)/2$-dimensional  submanifold of the Grassmannian
manifold of $V$. Moreover for any
$L \in \Lambda(V,\omega)$ the tangent space $T_L\Lambda(V,\omega)$
is canonically isomorphic to the space  $\Bsym(L)$ of all symmetric bilinear
forms on $L$.
Given  $L_0\in\Lagr(V,\omega)$ and any non-negative integer $j \in \Set{0,\dots,
m}$, we define the sets
$\Lagr^j(L_0;V)\=\big\{L\in\Lagr(V,\omega):\dim(L\cap L_0)=k\big\}$ and we
observe that
$\Lagr(V,\omega)\= \bigcup_{j=0}^m \Lagr^j(L_0;V)$. It is well-known that
$\Lagr^j(L_0;V)$ is a connected embedded analytic  submanifold of
$\Lagr(V,\omega)$ (being  locally
closed  orbit of the Lie group $\Sp(V,\omega, L_0)$ of all symplectomorphism of
$(V, \omega)$ which preserve $L_0$  (cfr. \cite[Theorem 2.9.7]{Var74}),  and
connected) having
codimension equal to $ j(j+1)/2$. In particular $\Lagr^1(L_0;V)$
has codimension $1$ and for $ k \geq 2$ the codimension of $\Lagr^j(L_0;V)$ in
$\Lagr(V,\omega)$
is bigger or equal to $3$.  Its tangent space  is  canonically isomorphic to the
space of
all symmetric bilinear forms over $L$ vanishing on $L \cap L_0$.
A central object in our discussion is played by the
{\em universal Maslov (singular) cycle with vertex at $L_0$\/},
being an algebraic (actually a determinantal) variety defined by
\begin{equation}\label{eq:univ-Maslov-cycle}
\Sigma(L_0;V) \= \bigcup_{j=1}^m \Lagr^j(L_0;V).
\end{equation}
We observe that the Maslov  cycle is the  (topological) closure of lowest
codimensional stratum
$\overline{\Lagr^1(L_0;V)}$. In particular, $\Lagr^0(L_0;V)$, the set of all
Lagrangian
subspaces that are transversal to $L_0$, is an open and  dense subset of
$\Lagr(V,\omega)$.
The (top stratum) codimensional 1-submanifold $\Lagr^1(L_0;V)$ in
$\Lagr(V,\omega)$ is {\em co-oriented\/} or
otherwise stated it carries a {\em transverse orientation\/}. In fact
given $\varepsilon >0$, for each $L \in \Lagr^1(L_0;V)$, the smooth
path of Lagrangian subspaces $\ell:(-\varepsilon, \varepsilon) \to
\Lagr(V,\omega)$ defined by
$\ell(t)\=\exp(tJ)$ crosses $\Lagr^1(L_0;V)$ transversally. The desired
transverse orientation is given by
the direction along the path when the parameter runs between $(-\varepsilon,
\varepsilon)$.
In an equivalent way, the {\em co-orientation or transverse orientation\/}
is meant in the following sense; we first observe that the mapping
\[
 T_L\Lagr(V,\omega) \simeq \Bsym(L) \ni B \mapsto B\vert_{(L_0\cap
L)\times(L_0\cap L)} \in \Bsym(L_0\cap L)
\]
passes to the quotient $ T_L\Lagr(V,\omega)/T_L\Lagr^j(L_0;V) \to \Bsym(L_0\cap
L).$
The hypersurface $\Lagr^1(L_0;V)$ carries a {\em canonical transverse
orientation\/} which is defined by
declaring that a vector $B\in T_L\Lagr(V,\omega), B \notin T_L\Lagr^1(L_0;V)$ is
{\em positively oriented\/}
if the non-zero symmetric bilinear form $B\vert_{(L\cap L_0)\times (L\cap L_0)}$
on the line
$L\cap L_0$ is positive definite. Thus the Maslov cycle is two-sidedly embedded
in $\Lagr(V,\omega)$. Based on
these properties, Arnol'd in \cite{Arn67}, defined an intersection index for
closed loops in $\Lagr(V,\omega)$
(actually in $(\R^{2m}, \omega)$, but the treatment in this more general
situation presents no difficulties) via
transversality arguments. This general position arguments can be generalised to
Lagrangian paths (not only closed)
with endpoints out of the Maslov cycle. Following authors in \cite{CLM94, HS09}
we introduce the following Definition.
\begin{defn}\label{def:Maslov-index}
Let $L_0 \in \Lagr(V,\omega)$ and, for $a<b$, let  $\ell\in \mathscr
C^0\big([a,b],\Lagr(V,\omega)\big)$. We define
the {\em (relative) Maslov index of $\ell$ with respect to $L_0$\/} as the
integer given by
\begin{equation}\label{eq:intersection-number}
 \iCLM(L_0, \ell)\=\left[\exp(-\varepsilon J)\,\ell: \Sigma(L_0;V)\right]
\end{equation}
where $\varepsilon \in (0,1)$ is sufficiently small and where the right-hand
side denotes the intersection number.
\end{defn}
\begin{rem}
A few Remarks on the Definition  \ref{def:Maslov-index} are in order. By the
basic geometric
observation given in \cite[Lemma 2.1]{CLM94}, it readily follows that there
exists $\varepsilon >0$
sufficiently small such that $\exp(-\varepsilon J)\,\ell(a), \exp(-\varepsilon
J)\,\ell(b)$ doesn't lie
on $\Sigma(L_0;V)$. By \cite[Step 2, Proof of Theorem 2.3]{RS93}, there exists a
perturbed path $\widetilde \ell$
having only {\em simple crossings\/} (namely the path $\ell$ intersects the
Maslov cycle transversally and  in
the top stratum. Since, simple crossings are isolated, on a compact interval are
in a finite number. To each
crossing instant  $t_i \in (a,b)$ we associate the number $s(t_i)= 1$ (resp.
$s(t_i)=-1$) according to the fact that,
in a sufficiently small neighbourhood of $t_i$, $\widetilde \ell$ have the same
(resp. opposite) direction of
$\exp(t\,J) \widetilde \ell(t_i)$. Then the intersection number given in Formula
\eqref{eq:intersection-number} is
equal to the summation of $s(t_i)$, where the sum runs over all crossing
instants $s(t_i)$.
\end{rem}
\noindent
The Maslov index given in Definition \ref{def:Maslov-index} have many important
properties (cfr. \cite{RS93, CLM94}
for further dails). Below we list only a few of them that we'll use in the
sequel for computing the Maslov index and,
for further details, we refer the interested reader to the aforementioned paper
and references therein.
\begin{enumerate}
 \item[]{\bf Property I (Reparametrisation Invariance)\/}
 Let $\psi:[a,b] \to [c,d]$ be a  continuous function with $\psi(a)=c$ and
$\psi(b)= d$.
 Then $ \iCLM(L_0, \ell)=\iCLM(L_0, \ell\circ \psi).$
 \item[]{\bf Property II (Homotopy invariance Relative to the Ends)\/} Let
\[
\overline  \ell: [0,1] \times [a,b] \to
\Lagr(V,\omega):(s,t)\mapsto \overline\ell(s,t)
\]
be a continuous two-parameter family of Lagrangian subspaces such that
$\dim\big(L_0\cap \overline \ell(s,a)\big) $
and $\dim\big(L_0\cap \overline{\ell}(s,b)\big) $ are independent on $s$. Then $
 \iCLM(L_0, \overline\ell_0) = \iCLM(L_0, \overline\ell_1)$
where $\overline \ell_0(\cdot)\= \overline\ell(0, \cdot)$ and
$\overline \ell_1(\cdot)\= \overline\ell(1, \cdot)$.
\item[]{\bf Property III (Path Additivity)\/} If $c \in (a,b)$, then
\[
\iCLM(L_0, \ell)= \iCLM(L_0, \overline\ell\vert_{[a,c]})+ \iCLM(L_0,
\overline\ell\vert_{[c,b]}).
\]
\item[]{\bf Property IV (Symplectic Invariance)\/} Let $\phi \in \mathscr
C^0\big([a,b], \Sp(V,\omega)\big)$ be a
 continuous path in the (closed) symplectic group $\Sp(V,\omega)$ of all
symplectomorphisms of $(V,\omega)$. Then
 \[
  \iCLM(L_0, \ell)= \iCLM\big(\phi(t)\, L_0, \phi(t)\, \ell(t)\big), \qquad t
\in [0,1].
 \]
 \item[]{\bf Property V (Symplectic Additivity)\/} For $i=1,2$ let
 $(V_i, \omega_i)$  be symplectic  vector spaces, $L_i \in \Lagr(V_i,\omega_i)$
and let $\ell_i \in
 \mathscr C^0\big([a,b], \Lagr(V_i,\omega_i\big)$. Then
 \[
  \iCLM(\ell_1\oplus \ell_2, L_1\oplus L_2)= \iCLM(\ell_1, L_1)+ \iCLM(\ell_2,
L_2).
 \]
 \end{enumerate}
Although the Definition of the Maslov index given in Formula
\ref{eq:intersection-number} is apparently
simple, the computation of this homotopy invariant is, in general, quite
involved. One  efficient technique
for computing this invariant, was introduced (in the non-degenerate case)
by the authors in \cite{RS93} through  the so-called {\em crossing forms\/} and,
generalised (in the degenerate
situation) by authors in \cite{GPP03, GPP04}.
For $\varepsilon >0$ let $\ell^* :(-\varepsilon, \varepsilon) \to
\Lagr(V,\omega)$ be a $\mathscr C^1$-path such that
$\ell^*(0)=L$. Let $L_1$ be a fixed Lagrangian complement of
$L$ and, for $v \in L$ and for sufficiently small $t$ we define $w(t) \in L_1$
such that
$v+w(t) \in \ell^*(t)$. Then the  form
\begin{equation}\label{eq:laq}
 Q[v]=\dfrac{d}{dt}\Big|_{t=0} \omega\big(v, w(t)\big)
\end{equation}
is independent of the choice of $L_1$. A {\em crossing instant\/} $t_0$ for the
continuous curve $\ell:[a,b] \to \Lagr(V,\omega)$ is an instant such that
$\ell(t_0)\in \Sigma(L_0;V)$. If the
curve is $\mathscr C^1$, at each
crossing, we define the {\em crossing form\/} as  the quadratic form on
$\ell(t_0)\cap L_0$ given by
\[
\Gamma(\ell, L_0, t_0)= Q(\ell(t_0, \dot \ell(t_0)\Big\vert_{\ell(t_0)\cap L_0}
\]
where $Q$ was defined in Formula \eqref{eq:laq}. A crossing $t_0$ is called {\em
regular\/} if the crossing
form is non-degenerate; moreover if the curve $\ell$ has only
regular crossings we shall refer as a {\em regular path. \/}(Heuristically,
$\ell$ has only regular
crossings if and only if it is transverse to $\Sigma(L_0)$).  Following authors
in \cite{LZ00b}, if
$\ell: [a,b] \to \Lagr(V,\omega)$ is a regular $\mathscr C^1$-path,
then the crossing instants are in a finite number and the Maslov index is given
by:
\[
 \iCLM(L_0,\ell)=\coindex\left[{\Gamma(\ell(a), L_0, a)}\right]+
 \sum_{\substack{t_0 \in \ell^{-1}\big(\Sigma(L_0;V)\big)\\t_0 \in
]a,b[}}\ssgn{\Gamma(\ell(t_0), L_0, t_0)} -
 \iMor{\left[\Gamma(\ell, L_0, b)\right]},
\]
where $\coindex, \iMor $ denotes respectively the number of positive ({\em
coindex\/}), negative eigenvalues ({\em
index\/}) in the  Sylvester's Inertia Theorem and where $\sgn\=\coindex-\iMor$
denotes the ({\em signature\/}). We
observe that any $\mathscr C^1$-path is homotopic through a fixed endpoints
homotopy to a path
having  only regular  crossings.

In order to prove a dihedral equivariant Bott-type iteration formula by using
the  Maslov-type index, we need to review
the Maslov index theory of the complex Lagrangian subspaces. For, let $(\C^{2m},
\omega)$ be the complex symplectic
vector space with the symplectic form $\omega(x,y)= \langle J x, y\rangle, $ for
all $ x,y \in \C^{2m},$
where  $\langle \cdot, \cdot \rangle$ denotes the standard Hermitian product in
$\C^{2m}$,
$ J=\begin{bmatrix}
           0 & -\Id_m\\ \Id_m & 0
          \end{bmatrix}$
and $\Id_m$ is the identity matrix. If no confusion is possible we'll omit the
subindex $m$. We recall that a {\em
complex subspace $L$ is Lagrangian\/} if $\omega|_L =0$ and the $\dim_\C L=m$.
We let $L^\pm = \ker (iJ \mp \I_{2m})$
and we observe that $L$ is a (complex) Lagrangian subspace $L$ if and only if it
can be seen as the graph of a unitary operator $U: L^+\to L^-$. Otherwise stated
the {\em complex Lagrangian Grassmannian\/}
is homeomorphic  to the unitary group of $\C^{2m}$:
\begin{equation}\label{eq:omeomorfismo}
  \Lagr(\C^{2m},\omega) \cong \U(m).
\end{equation}
(Cfr.  \cite{Edw64, Arn00, Zhu06, Por10} and references therein, for further
details).
We denote by $\mathbb F$ the homeomorphism defined in Formula
\eqref{eq:omeomorfismo} and we observe that
\[
\dim \big(L_1 \cap L_2\big) = \dim \ker \big(\mathbb F(L_2)^{-1} \mathbb F(L_1)
-\Id_m\big) .
\]
For any fixed $U \in \U(m)$, we let $\Sigma(U)\=\Set{U_0 \in \U(m)|
\det\big(U^{-1}\, U_0  -\Id_m\big)=0}.$
We refer to $\Sigma(U)$ as the {\em singular cycle of $U$.\/} We observe that,
for any $U_0 \in \Sigma(U)$, there
exists $\varepsilon >0$ sufficiently small such that  $e^{it} U_0$  is
transversal to $\Sigma(U)$. Let now
$\mathcal U:[a,b] \to \U(m)$ be a continuous path.
For $\varepsilon >0$ small enough, $e^{-\varepsilon \, i} \, \mathcal U(a)$ and
$e^{-\varepsilon \, i} \, \mathcal U(b)$ are out of the singular cycle of $U$
and the intersection number of the
perturbed path $e^{\varepsilon\,i}\mathcal U$  with the singular cycle
$\Sigma(U)$, is well-defined. For this
reason we are entitled to introduce the following Definition.
\begin{defn}\label{def:Maslov-complex}
 Let $L \in \Lagr(\C^{2m}, \omega)$ be fixed and let $\ell:[a,b]
\to\Lagr(\C^{2m}, \omega)$ be a continuous
 path. We define the {\em (complex) Maslov index\/} as $ \iCLM(L, \ell;[a,b])\=
[e^{-\varepsilon \, i}\mathbb F(\ell):
 \Sigma(\mathbb F(L)].$
 \end{defn}
 \noindent
Given  $L \in \Lagr(\R^{2m}, \omega)$ be a real Lagrangian subspace, then
$L^\C\= L \otimes \C \in \Lagr(C^{2m}, \omega)$.
We define the manifold $\Lagr_\R(\C^{2m}, \omega)\=\Set{L= \overline L | L \in
\Lagr(C^{2m},\omega)}$
and we observe that $\Lagr_\R(\C^{2m}, \omega)$ is isomorphic to $\Lagr(\R^{2m},
\omega) \otimes \C$. It is worth
noticing that, given $L \in \Lagr(\C^{2m}, \omega)$, we have $\mathbb
F(e^{-\varepsilon J}L)= e^{-2 \varepsilon i}
\mathbb F(L)$. Thus the (real) Maslov index given in Definition
\ref{def:Maslov-index} coincides with the (complex)
Maslov index given in Definition \ref{def:Maslov-complex} when the path consists
of real Lagrangian subspaces. So the
Maslov index of a path of real Lagrangian subspaces is the same as the path of
complex Lagrangian subspaces. We
conclude by observing that the Definition of the crossing form as well as the
computation of the Maslov index for a
$\mathscr C^1$ regular path through crossing forms is the same as in the real
case and it also fulfil
Properties I-V given above.

% % % % % % % % % % % % % % % % % % % % % % % % % % % % % % % % % % % % % % % %
% % % % % % % % % % % % %
% %
% %
% % % % % % % % % % % % % % % % % % % % % % % % % % % % % % % % % % % % % % % %
% % % % % % % % % % % % %

%%%%%%%%%%%%%%%%%%%%%%%%%%%%%%%%%%%%%%%%
%%
%%
%%
%%%%%%%%%%%%%%%%%%%%%%%%%%%%%%%%%%%%%%%

\subsection{On the spectral flow for paths of closed self-adjoint Fredholm
operators}\label{subsec:spectral-flow}

The aim of this Subsection is to briefly recall the Definition and the main
properties of the spectral
flow for a continuous path of closed self-adjoint Fredholm operator. It is
well-known that this topological
invariant was introduced by authors in \cite{APS76} in order to develop an Index
Theory on manifolds with
boundary. Since then, it has been extensively applied and investigated
extensively.

Let $E$ be a separable complex Hilbert space and let
$T: \mathcal D(T) \subset E \to E$ be  a  self-adjoint
Fredholm
operator. By the Spectral decomposition Theorem (cf., for instance,
\cite[Chapter III,
Theorem 6.17]{Kat80}), there is an orthogonal decomposition $
 E= E_-(T)\oplus E_0(T) \oplus E_+(T),$
that reduces the operator $T$
and has the property that
\[
 \sigma(T) \cap (-\infty,0)=\sigma\big(T_{E_-(T)}\big), \quad
 \sigma(T) \cap \{0\}=\sigma\big(T_{E_0(T)}\big),\quad
 \sigma(T) \cap (0,+\infty)=\sigma\big(T_{E_+(T)}\big).
\]
\begin{defn}\label{def:essential}
Let $T \in \mathcal{CF}^{sa}(E)$. We  term $T$ {\em essentially
positive\/}
if $\sigma_{ess}(T)\subset (0,+\infty)$, {\em essentially negative\/} if
$\sigma_{ess}(T)\subset (-\infty,0)$ and finally
{\em strongly indefinite\/} respectively if $\sigma_{ess}(T) \cap (-\infty,
0)\not=
\emptyset$ and $\sigma_{ess}(T) \cap ( 0,+\infty)\not=
\emptyset$.
\end{defn}
\noindent
If $\dim E_-(T)<\infty$,
we define its {\em Morse index\/}
as the integer denoted by $\iindex{T}$ and defined as $
 \iindex{T} \= \dim E_-(T).$

Given $T \in\cfsa(E)$, for  $a,b \notin
\sigma(T)$ we set $
\mathcal P_{[a,b]}(T)\=\Real\left(\dfrac{1}{2\pi\, i}\int_\gamma
(\lambda-T)^{-1} d\, \lambda\right)$
where $\gamma$ is the circle of radius $\dfrac{b-a}{2}$ around the point
$\dfrac{a+b}{2}$. We recall that if
$[a,b]\subset \sigma(T)$ consists of  isolated eigenvalues of finite type then
\begin{equation}\label{eq:range-proiettor}
 \im \mathcal P_{[a,b]}(T)= E_{[a,b]}(T)\= \bigoplus_{\lambda \in (a,b)}\ker
(\lambda -T);
\end{equation}
(cf. \cite[Section XV.2]{GGK90}, for instance) and $0$ either belongs in the
resolvent set of $T$ or it is an isolated eigenvalue of finite multiplicity.
Let us now consider the {\em graph distance topology\/} which is the topology
induced by the {\em gap
metric\/} $d_G(T_1, T_2)\=\norm{P_1-P_2}$
where $P_i$ is the projection onto the graph of $T_i$ in the product space
$E
\times E$. The next result allow us to  define the spectral flow for
gap
continuous paths in  $\cfsa(E)$.
\begin{prop}\label{thm:cor2.3}
 Let $T_0 \in \cfsa(E)$ be fixed.
 \begin{enumerate}
  \item[(i)] There exists a positive real number $a \notin \sigma(T_0)$ and an
open neighborhood $\mathscr N \subset  \cfsa(E)$ of $T_0$ in the gap
topology
such that $\pm a \notin
\sigma(T)$ for all $T \in  \mathscr N$ and the map
 \[
  \mathscr N \ni T \longmapsto \mathcal P_{[-a,a]}(T) \in \Lin(E)
 \]
is continuous and the projection $\mathcal P_{[-a,a]}(T)$ has constant finite
rank for all $T \in \mathscr N$.
 \item[(ii)] If $\mathscr N$ is a neighborhood as in (i) and $-a \leq c \leq d
\leq a$ are such that $c,d \notin
 \sigma(T)$ for all $T \in \mathscr N$, then $T \mapsto \mathcal P_{[c,d]}(T)$
is
continuous on $\mathscr N$.
 Moreover the rank of $\mathcal P_{[c,d]}(T) \in \mathscr N$ is finite and
constant.
 \end{enumerate}
\end{prop}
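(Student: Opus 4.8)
The plan is to read everything off the behaviour of the spectrum and of the resolvent of a self-adjoint operator under gap convergence, together with the Riesz projection calculus. Since $T_0\in\cfsa(E)$ is a self-adjoint Fredholm operator, $0\notin\sigma_{\mathrm{ess}}(T_0)$, so $0$ is either in the resolvent set $\rho(T_0)$ or an isolated eigenvalue of finite multiplicity; in either case there is $a_0>0$ with $\sigma(T_0)\cap[-a_0,a_0]\subseteq\{0\}$. First I would fix any $a\in(0,a_0)$, so that $\pm a\notin\sigma(T_0)$, and let $\gamma$ be the circle of radius $a$ centred at $0$; as $T_0$ is self-adjoint one has $\sigma(T_0)\subseteq\R$, hence $\gamma\cap\sigma(T_0)\subseteq\{-a,a\}\cap\sigma(T_0)=\emptyset$.

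The main analytic input is that, for self-adjoint operators, convergence in the gap metric is equivalent to norm resolvent convergence, and that norm resolvent convergence propagates to uniform norm convergence of the resolvents on compact subsets of the resolvent set (see \cite{Kat80,BLP05,Les05}). Applying this to the compact set $\gamma\subseteq\rho(T_0)$ produces a gap-neighbourhood $\mathscr N$ of $T_0$, which I would take to be connected (a sufficiently small ball in the gap metric), such that $\gamma\subseteq\rho(T)$ --- in particular $\pm a\notin\sigma(T)$ --- for every $T\in\mathscr N$, and such that $\sup_{\lambda\in\gamma}\norm{(\lambda-T)^{-1}-(\lambda-T_0)^{-1}}\to 0$ as $T\to T_0$. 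Consequently $\mathcal P_{[-a,a]}(T)=\Real\big(\frac{1}{2\pi i}\oint_\gamma(\lambda-T)^{-1}\,d\lambda\big)$ is defined for every $T\in\mathscr N$; since the convergence above is uniform on the compact contour $\gamma$, one may pass to the limit under the integral sign, and the same argument based at an arbitrary point $T_1\in\mathscr N$ (where still $\pm a\notin\sigma(T_1)$) shows that $T\mapsto\mathcal P_{[-a,a]}(T)$ is continuous on all of $\mathscr N$.

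For the rank: $\mathcal P_{[-a,a]}(T)$ is the orthogonal spectral projection onto the spectral subspace of $T$ corresponding to $\sigma(T)\cap(-a,a)$, and at $T_0$ this subspace is $\ker T_0$ (or $\{0\}$ when $0\in\rho(T_0)$), which is finite dimensional because $T_0$ is Fredholm. The rank of a norm-continuous family of orthogonal projections is locally constant and integer valued, so, $\mathscr N$ being connected, $\mathcal P_{[-a,a]}(T)$ has constant finite rank on $\mathscr N$; this proves (i). For (ii), with $\mathscr N$ as just constructed and $-a\le c\le d\le a$ such that $c,d\notin\sigma(T)$ for all $T\in\mathscr N$, let $\gamma'$ be the circle of radius $(d-c)/2$ centred at $(c+d)/2$: it is contained in the closed disc of radius $a$ about $0$ and meets $\R$ only in $\{c,d\}$, so $\gamma'\cap\sigma(T)=\emptyset$ for every $T\in\mathscr N$. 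Exactly the same reasoning --- uniform convergence of the resolvents on the compact contour $\gamma'$ and passage to the limit under the integral --- gives the continuity of $T\mapsto\mathcal P_{[c,d]}(T)$ on $\mathscr N$; its rank at $T_0$ equals $\dim E_{(c,d)}(T_0)\le\dim\ker T_0<\infty$, and local constancy plus connectedness of $\mathscr N$ again yield the constancy of the (finite) rank.

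The step I expect to be the real obstacle is the uniform continuity of the resolvent on the compact contours $\gamma$ and $\gamma'$ under gap convergence: this is precisely where one must invoke the relation between the gap metric and norm resolvent convergence for self-adjoint operators, and the standard lemma propagating norm resolvent convergence to compact subsets of $\rho(T_0)$. Once that input is in place, the remainder is the Riesz projection formalism together with the elementary fact that the rank of an orthogonal projection is locally constant in the operator norm.
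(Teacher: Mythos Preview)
Your argument is correct and is essentially the standard proof: isolate a spectral gap around $0$ using that $T_0$ is Fredholm self-adjoint, invoke the equivalence of gap convergence and norm resolvent convergence for self-adjoint operators to obtain uniform resolvent bounds on the compact contours $\gamma$ and $\gamma'$, and then read off continuity and rank constancy of the Riesz projections. The paper itself does not give a proof of this proposition at all; it simply refers the reader to \cite[Proposition~2.10]{BLP05}, and what you have written is precisely the argument carried out there (and is also the classical Kato-style proof).
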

\begin{proof}
For the proof of this result we refer the interested reader to
\cite[Proposition 2.10]{BLP05}.
\end{proof}
Let $\mathcal A:[a,b] \to \cfsa(E)$ be a gap continuous path.  As
consequence
of Proposition \ref{thm:cor2.3}, for every $t \in [a,b]$ there exists $a>0$ and
an open
connected neighborhood $\mathscr N_{t,a} \subset \cfsa(E)$ of
$\mathcal
A(t)$
such that $\pm a \notin \sigma(T)$ for all $T \in \mathscr N_{t,a}$ and the map
$\mathscr N_{t,a} \in T \longmapsto \mathcal P_{[-a,a]}(T) \in \mathcal
B$
is continuous and hence $ \rk\left(\mathcal P_{[-a,a]}(T)\right)$ does not
depends on $T \in \mathscr N_{t,a}$. Let us consider the open covering
of the interval $[a,b]$ given by the
pre-images of the neighborhoods $\mathcal
N_{t,a}$ through $\mathcal A$ and, by choosing a sufficiently fine partition of
the interval $[a,b]$ having diameter less than the Lebesgue number
of the covering, we can find  $a=:t_0 < t_1 < \dots < t_n:=b$,
operators $T_i \in \cfsa(E)$ and
positive real numbers $a_i $, $i=1, \dots , n$ in such a way the restriction of
the path $\mathcal A$ on the
interval $[t_{i-1}, t_i]$ lies in the neighborhood $\mathscr N_{t_i, a_i}$ and
hence the
$\dim E_{[-a_i, a_i](\mathcal A_t)}$ is constant for $t \in [t_{i-1},t_i]$,
$i=1, \dots,n$.
\begin{defn}\label{def:spectral-flow-unb}
The \emph{spectral flow of $\mathcal A$} (on the interval $[a,b]$) is defined by
\[
 \spfl(\mathcal A, [a,b])\=\sum_{i=1}^N \dim\,E_{[0,a_i]}(\mathcal A_{t_i})-
 \dim\,E_{[0,a_i]}(\mathcal A_{t_{i-1}}) \in \Z.
\]
\end{defn}
(In shorthand Notation we  denote  $\spfl(\mathcal A, [a,b])$ simply  by
$\spfl(\mathcal A)$ if no confusion  is possible).
The spectral flow as given in Definition \ref{def:spectral-flow-unb} is
well-defined
(in the sense that it is independent either on the partition or on the $a_i$)
and only depends on
the continuous path $\mathcal A$. (Cfr. \cite[Proposition 2.13]{BLP05} and
references therein).
We list some useful properties of the spectral flow and we refer to \cite{BLP05}
for further details.
\begin{itemize}
 \item[]  {\bf Property I (Path Additivity)\/} If $\mathcal A_1: [a,b] \to
 \cfsa(E)$, $\mathcal A_1,\mathcal
A_2: [c,d] \to
 \cfsa(E)$ are two continuous path such that
$\mathcal A_1(b)=\mathcal A_2(c)$, then
 \[
  \spfl(\mathcal A_1 *\mathcal A_2) = \spfl(\mathcal A_1)+\spfl(\mathcal A_2).
 \]
 \item[] {\bf Property II (Homotopy Relative to the Ends)\/} If $
  h: [0,1]\times [a,b] \to \cfsa(E):(s,t)\mapsto h(s,t)$ is a
continuous map such that $
  h_a: [0,1] \ni s \mapsto \dim\ker h(s,a)\in \cfsa(E)$ and
   $
   h_b: [0,1]\ni t \mapsto \dim\ker h(s,b)\in \cfsa(E)$ are independent
on $s$, then
   \[
    \spfl(h_0,[a,b])=\spfl(h_1, [a,b]),
   \]
   where $h_0(\cdot)\= h(0, \cdot)$ and $h_1(\cdot)=h(1,\cdot)$.
   \item[] {\bf Property III (Direct sum)\/} If for $i=1,2$, $E_i$ are
Hilbert spaces and if $h_i:[a,b] \to
   \cfsa(E_i)$ are two gap-continuous paths of self-adjoint Fredholm
operators, the
   \[
    \spfl(h_1\oplus h_2,[a,b])= \spfl(h_1,[a,b]) + \spfl(h_2, [a,b]).
   \]
\end{itemize}
As already observed, the spectral flow, in general,  depends on the whole path
and not
just on the ends. However, if the path has a special form, it actually depends
on the
end-points. More precisely, let  $\mathcal A ,\mathcal B\in \cfsa(E)$
and let $\widetilde{\mathcal A}:[a,b] \to \cfsa(E)$ be the path
pointwise defined by $\widetilde{\mathcal A}(t)\=\mathcal A+ \widetilde{\mathcal
B}(t)$  where $
\widetilde{\mathcal B}$ is any continuous curve of $\mathcal A$-compact
operators parametrised on $[0,1]$
such that  $\widetilde{\mathcal B}(0)\=0$ and  $ \widetilde{\mathcal B}(1)\=
\mathcal B$. In this case,
the spectral flow depends of the
path $\widetilde A$, only on the endpoints (cfr. \cite{LZ00a} and reference
therein).
\begin{rem}
 It is worth noticing that, since every operator $\widetilde{\mathcal A}(t)$ is
a compact perturbation of a
 a fixed one, the path $\widetilde{\mathcal A}$ is actually a continuous path
into $\Lin(\mathcal W; E)$,
 where $\mathcal W\=\mathcal D(\mathcal A)$.
\end{rem}
\begin{defn}\label{def:rel-morse-index}(\cite[Definition 2.8]{LZ00a}).
 Let $\mathcal A ,\mathcal B\in \cfsa(E)$ and we assume that $\mathcal
B$ is
 $\mathcal A$-compact (in the sense specified above). Then the
{\em  relative Morse index of the pair $\mathcal A$, $\mathcal A+\mathcal B$\/}
is
defined by
 \[
  \irel(\mathcal A, \mathcal A+\mathcal B)=-\spfl(\widetilde{\mathcal A};[a,b])
 \]
where $\widetilde{\mathcal A}\=\mathcal A+ \widetilde{\mathcal B}(t)$ and where
$
\widetilde{\mathcal B}$ is any continuous curve parametrised on $[0,1]$
of $\mathcal A$-compact operators such that
$\widetilde{\mathcal B}(0)\=0$ and
$ \widetilde{\mathcal B}(1)\= \mathcal B$.
\end{defn}
\noindent
In the special case in which the Morse index of both operators $\mathcal A$ and
$\mathcal A+\mathcal B$ are
finite, then
\[
\irel(\mathcal A, \mathcal A+\mathcal B)=\iindex{\mathcal A +\mathcal
B}-\iindex{\mathcal A}.
\]
Let  $\mathcal W, E$ be separable Hilbert spaces with a dense and
continuous
inclusion $\mathcal W \hookrightarrow E$ and let
$\mathcal A:[0,1] \to \cfsa(E)$  having fixed domain $\mathcal W$. We
assume that $\mathcal A$ is
a continuously differentiable path  $\mathcal A: [0,1] \to \cfsa(E)$
and
we denote by $\dot{\mathcal A}_{\lambda_0}$ the derivative of
$\mathcal A_\lambda$ with respect to the parameter $\lambda \in [0,1]$ at
$\lambda_0$.
\begin{defn}\label{def:crossing-point}
 An instant $\lambda_0 \in [0,1]$ is called a {\em crossing instant\/} if
$\ker\, \mathcal A_{\lambda_0} \neq 0$. The
 crossing form at $\lambda_0$ is the quadratic form defined by
\begin{equation}
 \Gamma(\mathcal A, \lambda_0): \ker \mathcal A_{\lambda_0} \to \R, \quad
\Gamma(\mathcal A, \lambda_0)[u] =
\langle \dot{\mathcal A}_{\lambda_0}\, u, u\rangle_E.
\end{equation}
Moreover a  crossing $\lambda_0$ is called {\em regular\/}, if $\Gamma(\mathcal
A, \lambda_0)$ is non-degenerate.
\end{defn}
We recall that there exists $\varepsilon >0$ such that   $\mathcal A +\delta \,
\Id_E$ has only regular crossings
  for almost every $\delta \in (-\varepsilon, \varepsilon)$. (Cfr., for instance
\cite[Theorem 2.6]{Wat15}
  and references therein).
In the special case in which all crossings are regular, then the spectral flow
can be easily computed through  the
crossing forms. More precisely the following result  holds.
\begin{prop}
 If $\mathcal A:[0,1] \to \cfsa(\mathcal W, E)$ has only regular
crossings then they are in a finite
 number and
 \[
  \spfl(\mathcal A, [0,1]) = -\iMor{\left[\Gamma(\mathcal A,0)\right]}+
\sum_{t_0 \in (0,1)}
  \sgn\left[\Gamma(\mathcal A, t_0)\right]+
  \coiindex{\Gamma(\mathcal A,1)}
 \]
where the sum runs over all the crossing instants.
\end{prop}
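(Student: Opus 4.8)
The plan is to localise the spectral flow near each crossing by a finite‑dimensional reduction, to read off the jump of the spectral subspace $E_{[0,a]}(\mathcal A_\lambda)$ at a crossing from the crossing form, and finally to glue the local contributions together by Path Additivity of the spectral flow. Fix a crossing instant $\lambda_0$. By Proposition~\ref{thm:cor2.3} there are $a>0$ and a neighbourhood of $\lambda_0$ on which $\pm a\notin\sigma(\mathcal A_\lambda)$ and the orthogonal Riesz projection $P_\lambda\=\mathcal P_{[-a,a]}(\mathcal A_\lambda)$ has constant finite rank $d$ with image $V_\lambda\=E_{[-a,a]}(\mathcal A_\lambda)$. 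Since $\mathcal A$ is $\mathscr C^{1}$ into $\cfsa(E)$ with fixed domain $\mathcal W$, the resolvent and hence $\lambda\mapsto P_\lambda$ are $\mathscr C^{1}$ near $\lambda_0$; solving the Kato transport equation $\dot U_\lambda=[\dot P_\lambda,P_\lambda]\,U_\lambda$ with $U_{\lambda_0}=\Id$ produces a $\mathscr C^{1}$ family of unitaries with $U_\lambda V_{\lambda_0}=V_\lambda$, so that $B_\lambda\=\big(U_\lambda^{*}\mathcal A_\lambda U_\lambda\big)|_{V_{\lambda_0}}$ is a $\mathscr C^{1}$ path of symmetric operators on the fixed $d$‑dimensional space $V_{\lambda_0}$ with $\ker B_{\lambda_0}=\ker\mathcal A_{\lambda_0}\eq K$. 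The first key point is the elementary identity that, as quadratic forms on $K$,
\[
\big\langle\dot B_{\lambda_0}u,u\big\rangle=\big\langle\dot{\mathcal A}_{\lambda_0}u,u\big\rangle=\Gamma(\mathcal A,\lambda_0)[u],
\]
the two transport terms $\langle\dot U_{\lambda_0}^{*}\mathcal A_{\lambda_0}u,u\rangle$ and $\langle\mathcal A_{\lambda_0}\dot U_{\lambda_0}u,u\rangle$ vanishing because $\mathcal A_{\lambda_0}u=0$ and $\mathcal A_{\lambda_0}=\mathcal A_{\lambda_0}^{*}$.

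Next I would carry out a Lyapunov--Schmidt reduction of $B_\lambda$ along $V_{\lambda_0}=K\oplus K^{\perp}$. The block $B_\lambda^{\perp\perp}$ is invertible for $\lambda$ near $\lambda_0$ (it is at $\lambda_0$, since $B_{\lambda_0}|_{K^{\perp}}$ is), so $\iMor(B_\lambda)=\iMor(B_\lambda^{\perp\perp})+\iMor(S_\lambda)$ with $S_\lambda$ the symmetric Schur complement on $K$, and $\iMor(B_\lambda^{\perp\perp})$ locally constant. Since $B_{\lambda_0}K=0$ kills the diagonal $K$‑block and the off‑diagonal block at $\lambda_0$, one gets $S_{\lambda_0}=0$ and $\dot S_{\lambda_0}=\Gamma(\mathcal A,\lambda_0)$; regularity of the crossing means $\Gamma(\mathcal A,\lambda_0)$ is invertible, hence $S_\lambda=(\lambda-\lambda_0)\,\Gamma(\mathcal A,\lambda_0)+o(\lambda-\lambda_0)$ is invertible for $0<|\lambda-\lambda_0|$ small, with $\iMor(S_\lambda)=\iMor[\Gamma(\mathcal A,\lambda_0)]$ for $\lambda>\lambda_0$ and $\iMor(S_\lambda)=\coiindex{\Gamma(\mathcal A,\lambda_0)}$ for $\lambda<\lambda_0$. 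In particular $\mathcal A_\lambda|_{V_\lambda}$, and so $\mathcal A_\lambda$ itself, is invertible for $0<|\lambda-\lambda_0|$ small, so crossings are isolated and hence finite in number on $[0,1]$. Using $\dim E_{[0,a]}(\mathcal A_\lambda)=d-\iMor(\mathcal A_\lambda|_{V_\lambda})$, this shows that as $\lambda$ increases through $\lambda_0$ the integer $\dim E_{[0,a]}(\mathcal A_\lambda)$ jumps by $\sgn\left[\Gamma(\mathcal A,\lambda_0)\right]$ when $\lambda_0\in(0,1)$; comparing the right‑hand limit at $\lambda_0=0$ with the value there (at which the $K$‑directions already lie in $E_{[0,a]}$) the jump is $-\iMor{\left[\Gamma(\mathcal A,0)\right]}$, and comparing the value at $\lambda_0=1$ with the left‑hand limit the jump is $\coiindex{\Gamma(\mathcal A,1)}$.

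To conclude, I would pick a partition $0=t_0<\dots<t_N=1$ finer than the Lebesgue number of the covering of $[0,1]$ by the neighbourhoods of Proposition~\ref{thm:cor2.3}, and such that each crossing lies in the interior of exactly one subinterval, except for crossings at $0$ or $1$, which are then the only crossings in their subintervals. A subinterval carrying no crossing contributes $0$ to $\spfl(\mathcal A,[0,1])$; one carrying a single interior crossing $t_0$ contributes $\sgn\left[\Gamma(\mathcal A,t_0)\right]$; the first subinterval contributes $-\iMor{\left[\Gamma(\mathcal A,0)\right]}$ (which is $0$ when $0$ is not a crossing) and the last contributes $\coiindex{\Gamma(\mathcal A,1)}$ (which is $0$ when $1$ is not a crossing). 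Summing the telescoping expression of Definition~\ref{def:spectral-flow-unb} over the partition, equivalently invoking Path Additivity of the spectral flow, yields
\[
\spfl(\mathcal A,[0,1])=-\iMor{\left[\Gamma(\mathcal A,0)\right]}+\sum_{t_0\in(0,1)}\sgn\left[\Gamma(\mathcal A,t_0)\right]+\coiindex{\Gamma(\mathcal A,1)},
\]
only the crossing instants contributing to the sum.

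The hard part will be the local bookkeeping: verifying that the reduced derivative on $K$ is precisely the crossing form $\Gamma(\mathcal A,\lambda_0)$, that the Schur complement has the one‑sided Morse indices claimed, and carrying out carefully the comparison at the two end‑points, where an eigenvalue sits at $0$. Once these local facts are secured, the global identity is pure additivity of the spectral flow over the partition.
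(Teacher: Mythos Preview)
Your argument is correct and is precisely the Robbin--Salamon approach that the paper has in mind: the paper's own proof consists of the single sentence ``The proof of this result follows by arguing as in \cite{RS95},'' so you have in fact supplied the details that the paper merely cites. The local reduction via the Riesz projection $P_\lambda$, the Kato transport to trivialise the finite-rank bundle, the identification of $\dot B_{\lambda_0}|_K$ with the crossing form, the Schur-complement computation of the one-sided Morse indices, and the endpoint bookkeeping are all exactly the ingredients of \cite{RS95}.
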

\begin{proof}
 The proof of this result follows by arguing as in \cite{RS95}. This conclude
the proof.
\end{proof}

% % % % % % % % % % % % % % % % % % % % % % % % % % % % % % % % % % %
% % %
% % %
% % % % % % % % % % % % % % % % % % % % % % % % % % % % % % % %

% % % % % ================================================================

\newpage
\vspace{1cm}
	\noindent
	\textsc{Prof. Xijun Hu}\\
	Department of Mathematics\\
	Shandong University\\
	Jinan, Shandong, 250100 \\
	The People's Republic of China \\
	China\\
	E-mail: \email{xjhu@sdu.edu.cn}

\vspace{1cm}
\noindent
\textsc{Prof. Alessandro Portaluri}\\
Department of Agriculture, Forest and Food Sciences\\
Università degli Studi di Torino\\
Largo Paolo Braccini 2 \\
10095 Grugliasco, Torino\\
Italy\\
Website: \url{aportaluri.wordpress.com}\\
E-mail: \email{alessandro.portaluri@unito.it}

\vspace{1cm}
\noindent
\textsc{Dr. Ran Yang}\\
Department of Mathematics\\
Shandong University\\
Jinan,Shandong,250100\\
The People's Republic of China \\
China\\
E-mail: \email{yangran201311260@mail.sdu.edu.cn}

\vspace{1cm}
\noindent
COMPAT-ERC Website: \url{https://compaterc.wordpress.com/}\\
COMPAT-ERC Webmaster \& Webdesigner: Arch.  Annalisa Piccolo

\end{document}